\newcommand{\R}{\mathbb{R}}                                     % real numbers
\newcommand{\X}{\mathbb{X}}                                     % state space
\newcommand{\Y}{\mathbb{Y}}                                     % reduced state space
\newcommand{\M}{\mathbb{M}}
\newcommand{\N}{\mathbb{N}}
\renewcommand{\H}{\mathbb{H}} 
\providecommand{\norm}[1]{\left\lVert #1 \right\rVert}          % norm
\newcommand{\hl}[1]{#1}
\newcommand{\ts}{\hspace*{0.1em}} % thin space
\newcommand\xqed[1]{\leavevmode\unskip\penalty9999 \hbox{}\nobreak\hfill \quad\hbox{#1}}
\newcommand{\exampleSymbol}{\xqed{$\triangle$}}
\DeclareMathOperator*{\argmin}{arg\,min}
\newtheorem{theorem}{Theorem}[section]
\newtheorem{corollary}[theorem]{Corollary}
\newtheorem{lemma}[theorem]{Lemma}
\newtheorem{proposition}[theorem]{Proposition}
\newtheorem{definition}[theorem]{Definition}
\theoremstyle{definition}
\newtheorem{example}[theorem]{Example}
\newtheorem{remark}[theorem]{Remark}
\definecolor{boxback}{gray}{0.95}
\title{Dimensionality Reduction of Complex Metastable Systems via Kernel Embeddings of Transition Manifolds}
\author[1]{Andreas Bittracher}
\author[1]{Stefan Klus}
\author[2,3]{\\Boumediene Hamzi}
\author[1]{Péter Koltai}
\author[1,4]{Christof Sch\"utte}
\affil[1]{Department of Mathematics and Computer Science, Freie Universit\"at Berlin, Germany}
\affil[2]{Department of Mathematics, Imperial College London, London, UK}
\affil[3]{AlFaisal University, Riyadh, KSA}
\affil[4]{\normalsize Zuse Institute Berlin, Germany}
\date{}
\begin{document}
\maketitle

% \tableofcontents

\begin{abstract}
We present a novel kernel-based machine learning algorithm for identifying the low-dimensional geometry of the effective dynamics of high-dimensional multiscale stochastic systems.
Recently, the authors developed a mathematical framework for the computation of optimal reaction coordinates of such systems that is based on learning a parametrization of a low-dimensional transition manifold in a certain function space. In this article, we enhance this approach by embedding and learning this transition manifold in a reproducing kernel Hilbert space, exploiting the favorable properties of kernel embeddings.  Under mild assumptions on the kernel, the manifold structure is shown to be preserved under the embedding, and distortion bounds can be derived. This leads to  a more robust and more efficient algorithm compared to previous parametrization approaches.
\end{abstract}

\section{Introduction}
Many of the dynamical processes investigated in the sciences today are characterized by the existence of phenomena on multiple, interconnected time scales that determine the long-term behavior of the process.
Examples include the inherently multiscale dynamics of atmospheric vortex- and current formation which needs to be considered for effective weather prediction~\cite{Klein2010,Majda2003},
%to consider micro- and mesoscopic weather- and seasonal effects for effective longterm climate prediction~\cite{Palmer2008},
or the vast difference in time scales on which bounded atomic interactions, side-chain interactions, and the resulting formation of structural motifs occur in biomolecules~\cite{Freddolino2010,CaTh93,Bowman2011}.
An effective approach to analyzing these systems is often the identification of a low-dimensional observable of the system that captures the interesting behavior on the longest time scale.
%Together with a simplified dynamical model in the image space of this observable, this represents
However, the computerized identification of such observables from simulation data poses a significant computational challenge, especially for high-dimensional systems.

Recently, the authors have developed a novel mathematical framework for identifying such essential observables for the slowest time scale of a system~\cite{Bittracher2017}.
The method---called the \emph{transition manifold approach}---was primarily motivated by molecular dynamics, where the dynamics is typically described by a thermostated Hamiltonian system or diffusive motion in molecular dynamics landscapes. In these systems, local minima of the potential energy landscape induce \emph{metastable behavior}, which is the phenomenon that on long time scales, the dynamics is characterized by rare transitions between certain sets that happen roughly along interconnecting \emph{transition pathways}~\cite{PNAS09,SS13,towards_tpt2006}.
The sought-after essential observables should thus resolve these transition events, and are called \emph{reaction coordinates} in this context~\cite{SoEtAl96,Best2005}, a notion that we will adopt here.
Despite of its origins, the transition manifold approach is also applicable to other classes of reducible systems (which will also be demonstrated in this article).

At the heart of this approach is the insight that good reaction coordinates can be found by parametrizing a certain \emph{transition manifold} $\M$ in the function space $L^1$. This manifold has strong connections to the aforementioned transition pathway~\cite{BBS18}, but is not equivalent.
Its defining property is that, for times $\tau$ that fall between the fastest and slowest time scales, the \emph{transition density functions} with relaxation time $\tau$ concentrate around $\M$. The algorithmic strategy to parametrize $\M$ can then be summarized as
\begin{enumerate}[leftmargin=1em, itemindent=1em, topsep=0.5ex, itemsep=0ex]
\item Choose test points in the dynamically relevant regions of the state space.
\item Sample the transition densities for each test point by Monte Carlo simulation.
\item Embed the transition densities into a Euclidean space by a generic embedding function.
\item Parametrize the embedded transition densities with the help of established manifold learning techniques.
\end{enumerate}
The result is a reaction coordinate evaluated in the test points. This reaction coordinate has been shown to be as expressive as the dominant eigenfunctions of the transfer operator associated with the system~\cite{Bittracher2017}, which can be considered an ``optimal'' reaction coordinate~\cite{FGH14b,A19-1,MHP17}.
One decisive advantage of our method, however, is the ability to compute the reaction coordinate \emph{locally} (by choosing the test points), whereas with conventional methods, the inherently global computation of transfer operator eigenfunctions quickly becomes infeasible due to the curse of dimensionality. Kernel-based methods for the computation of eigenfunctions alleviate this problem to some extent \cite{SP15, KBSS18}. Nevertheless, the number of dominant eigenfunctions critically depends on the number of metastable states, which can be significantly larger than the \emph{natural} dimension of the reaction coordinate \cite{Bittracher2017}.

However, the algorithm proposed originally had several shortcomings related to the choice of the embedding function. First, in order to ensure the preservation of the manifold's topology under the embedding, the dimension of $\M$ had to be known in advance. Second, the particular way of choosing the embedding functions allowed no control over the distortion of $\M$, which may render the parametrization problem numerically ill-conditioned.

The goal of this article is to overcome the aforementioned problems by \emph{kernelizing} the transition manifold embedding. That is, we present a method to implicitly embed the transition manifold into a \emph{reproducing kernel Hilbert space} (RKHS) with a proper kernel. 
The RKHS is---depending on the kernel---a high- or even infinite-dimensional function space with the crucial property that inner products between points embedded into it can be computed by cheap kernel evaluations, without ever explicitly computing the embedding~\cite{Steinwart2008:SVM,Scholkopf2001}.
In machine learning, this so-called \emph{kernel trick} is often used to derive nonlinear versions of originally linear algorithms, by interpreting the RKHS-embedding of a data set as a high-dimensional, nonlinear transformation, and (implicitly) applying the linear algorithm to the transformed data. 
This approach has been successfully applied to methods such as \emph{principal component analysis} (PCA) \cite{Scholkopf98:KPCA}, \emph{canonical correlation analysis} (CCA) \cite{MRB01:CCA}, and \emph{time-lagged independent component analysis} (TICA) \cite{SP15}, to name but a few. 

Due to their popularity, the metric properties of the kernel embedding are well-studied~\cite{Smola07Hilbert,Fuk07,Sri10,gretton2012kernel,MFSS16}. In particular, for characteristic kernels, the RKHS is ``large'' in an appropriate sense, and geometrical information is well-preserved under the embedding. For our application, this means that distances between points on the transition manifold $\M$ are approximately preserved, and thus the distortion of $\M$ under the embedding can be bounded.
This will guarantee that the final manifold learning problem is well-posed.
Moreover, if the transformation induced by the kernel embedding is able to approximately linearize the transition manifold, there is hope that efficient \emph{linear} manifold learning methods can be used to parametrize the embedded transition manifold. 

The main contributions of this work are as follows:
\begin{enumerate}[leftmargin=1em, itemindent=1em, topsep=0.5ex, itemsep=0ex]
\item We develop a kernel-based algorithm to approximate transition manifolds and compare it with the Euclidean embedding counterpart.
\item We derive measures for the distortion of the embedding and associated error bounds.
\item We illustrate the efficiency of the proposed approach using academic and molecular dynamics examples.
\end{enumerate}
In Section~\ref{sec:reducibility criteria}, we will formalize the definition of transition manifolds and derive conditions under which systems possess such manifolds. Section~\ref{sec:Kernel-based learning of the transition manifold} introduces kernels and the induced RKHSs. Furthermore, we show that the algorithm to compute transition manifolds numerically can be written purely in terms of kernel evaluations and derive measures for the distortion of the manifold caused by the embedding into the RKHS. Numerical results illustrating the benefits of the proposed kernel-based methods are presented in Section~\ref{sec:examples} and a conclusion and future work in Section~\ref{sec:Conclusion}.

\section{Reaction coordinates based on transition manifolds}
\label{sec:reducibility criteria}

In what follows, let $\{X_t\}_{t\geq 0}$ (abbreviated as $X_t$) be a reversible, thus ergodic, stochastic process on a compact state space $\mathbb{X}\subset\mathbb{R}^n$ and $\rho$ its unique invariant density. That is, if $X_0\sim\rho$, then $X_t\sim \rho$ for all $t \ge 0$. For $x\in\X$ and $t\geq 0$, let \hl{$p^t_x \colon \X\rightarrow \R$} denote the transition density function of the system, i.e., \hl{$p^t_x$ describes the probability density at time $t$, after having started in point $x$ at time $0$.} \hl{For fixed $t$ and $x$, we will often consider $p^t_x$ as a point in the space $L^1(\X)$ (the space of absolutely integrable functions over $\X$ with respect to the Lebesgue measure), and later also other related function spaces. For the sake of clarity, we will from now on omit the argument of $L^p$ when referring to functions over $\X$.}

\subsection{Reducibility of dynamical systems}

We assume the state space dimension $n$ to be large. The main objective of this work is the identification of good low-dimensional \emph{reaction coordinates} (RCs) or \emph{order parameters} of the system. An $r$-dimensional RC is a smooth map $\xi \colon \X \rightarrow \Y$ from the full state space $ \X $ to a lower-dimensional space $\Y \subset \mathbb{R}^r$, $r\ll n$.
\hl{Loosely speaking, we call such an RC \emph{good} if on long enough time scales the projected process $\xi(X_t)$ is approximately Markovian and the dominant spectral properties of the operator describing its density evolution of $\xi(X_t)$ resemble those of $X_t$. This ensures that important long-time statistical properties such as equilibration times are preserved under projection onto the RC. We will now introduce a method to find such RCs. The explanation that the found RCs are indeed amenable to a rigorous quantitative measure of quality will be given in Section~\ref{sec:transfer operators}.}

\hl{
The method is based on a novel framework, introduced by some of the authors in~\cite{Bittracher2017}, that ties the existence of good RCs to certain geometrical properties of the family of transition densities. This framework is called the \emph{transition manifold framework}. To motivate the main idea, we first introduce the \emph{fuzzy transition manifold}:}

\begin{definition}
\label{def:fuzzy transition manifold}
Let $\tau>0$ be fixed. The set \hl{of functions}
$$
\widetilde{\mathbb{M}} := \{ p^\tau_x \mid x\in\mathbb{X}\} \hl{\subset L^1},
$$
is called the \emph{fuzzy transition manifold} of the system.
\end{definition}

\hl{The main idea behind the framework is now based on the following observation: If the system is in a certain way separable into a slowly equilibrating $r$-dimensional part and a quickly equilibrating $(n-r)$-dimensional part, and the lag time $\tau$ is chosen large enough so that the latter is essentially equilibrated but the former is not, then $\widetilde{\M}$ concentrates around an $r$-dimensional manifold in $L^1$. Further, any parametrization of this manifold is a good RC. Let us illustrate this with the aid of a simple example.}

\begin{example}
Consider the process $X_t$ to be described by overdamped Langevin dynamics
\begin{equation}
    \label{eq:Langevin dynamics}
    d X_t = -\nabla V(X_t) \ts dt + \sqrt{2/\beta} \ts dW_t,
\end{equation}
with the energy potential $V$, the inverse temperature $\beta$, and Brownian motion $W_t$. The potential depicted in Figure~\ref{fig:Transition manifold concept} possesses two metastable states, located around the local energy minima. Any realization of this system that started in one of the the metastable sets will likely remain in that set for a long time. Moreover, a realization that started outside of the wells will with high probability quickly leave the transition region and move into one of the two metastable sets. The probability of whether the trajectory will be trapped in the left or right well depends almost exclusively on the horizontal coordinate of the starting point $x$, or, in other words, the progress of $x$ along the transition path. Thus, for times $\tau$ that allow typical trajectories to move into one of the wells, the transition densities $p^\tau_x$ also depend only on the horizontal but not the vertical coordinate of $x$. This means that the fuzzy transition manifold \hl{$\widetilde{\M}$ tightly concentrates around} a one-dimensional manifold in \hl{$L^1$}. Also, any parametrization of this manifold corresponds to a parametrization of the horizontal coordinate, and thus to a good RC. \exampleSymbol
\begin{figure}
\centering
\includegraphics[width=.8\textwidth]{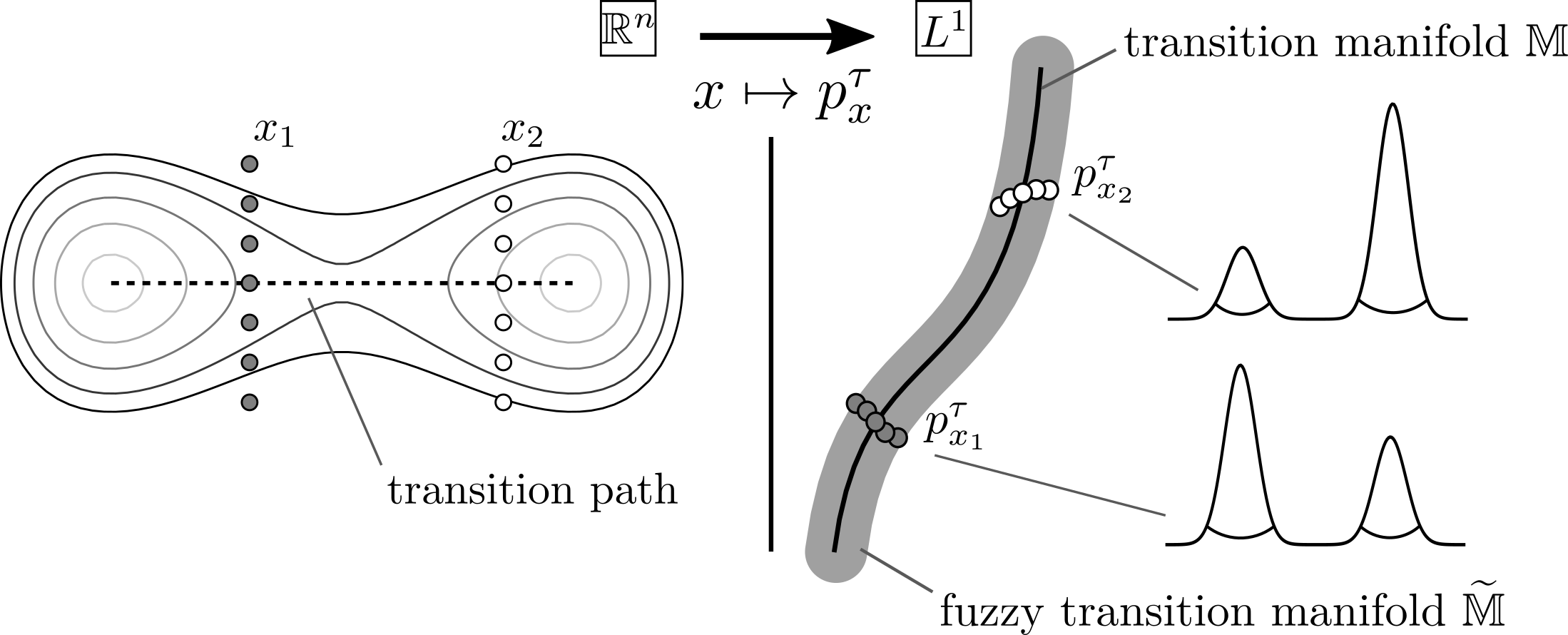}
\caption{Illustration of the transition manifold concept for metastable systems.}
\label{fig:Transition manifold concept}
\end{figure}

\end{example}

The concept of (fuzzy) transition manifolds can be made rigorous by the following definition:
\begin{definition}
\label{def:reducibility}
The process $X_t$ is called \emph{$(\varepsilon,r)$-reducible} if there exists an $r$-dimensional manifold \hl{of functions} $\mathbb{M}\subset\widetilde{\mathbb{M}}$ such that for a fixed lag time $\tau$, it holds that
\begin{equation}
\label{eq:def_reducibility}
\min_{f\in\mathbb{M}} \| f-p^\tau_x\|_{L^2_{1/\rho}} \leq \varepsilon \quad \text{for all } x\in\mathbb{X}.
\end{equation}
Any such $\M$ is called a \emph{transition manifold} of the system.
\end{definition}

Two remarks are in order:
\begin{enumerate}
\item \hl{
Note that in the above definition, the $L^2_{1/\rho}$ norm is used to measure distances, where $L^2_{1/\rho}$ is the space of (equivalence classes of) functions that are square-integrable with respect to the measure induced by the function $\frac{1}{\rho}$, and thus for $f\in L^2_{1/\rho}$,
	$$
	\|f\|_{L^2_{1/\rho}}= \left(\int_\X f(x)^2\frac{1}{\rho(x)}\ts dx\right)^{1/2}.
	$$
Closeness with respect to the $L^2_{1/\rho}$-norm instead of the $L^1$-norm is indeed a strict requirement here, as measuring the quality of a given RC will require a Hilbert space, see Section~\ref{sec:transfer operators}. Note that under appropriate assumptions on the system, it holds that $p^t_x\in L^2_{1/\rho}$ for all $x\in\X$. This will be shown in Lemma~\ref{lem:L1 estimation} and implies $\widetilde{\M}\subset L^2_{1/\rho}$, which together with the requirement $\M\subset\widetilde{\M}$ makes~\eqref{eq:def_reducibility} well-defined.}
\item The original definition of $(\varepsilon,r)$-reducibility (see \cite[Definition~4.4]{Bittracher2017}), is marginally different from the definition above: Instead of $\mathbb{M}\subset L^2_{1/\rho}$, we here require $\mathbb{M}\subset\widetilde{\mathbb{M}}\subset L^2_{1/\rho}$.
The introduction of this slightly stronger technical requirement allows us to later control a certain embedding error, see Proposition~\ref{prop:embedded fuzzy manifold}. Note that the proofs in~\cite{Bittracher2017} regarding the optimality of the final reaction coordinate are not affected by this change.
\end{enumerate}

In what follows, we always assume that the process $X_t$ is $(\varepsilon,r)$-reducible with small $\varepsilon$ and~$r\ll n$.

\begin{remark}
In addition to metastable systems, many other relevant problems possess such transition manifolds. For instance, one can show that, under mild conditions on $f$ and $g$, systems with explicit time scale separation, given by
\begin{align*}
    dX_t &= f(X_t,Y_t)dt + dW^{(1)}_t, \\
    dY_t &= \frac{1}{\varepsilon} g(X_t, Y_t)dt + \frac{1}{\sqrt{\varepsilon}}dW^{(2)}_t,
\end{align*}
also possess a transition manifold since $p^t_{(x_0,y_0)}$ essentially depends only on $x_0$ for $0 <\varepsilon\ll 1$ and $t \gg \varepsilon$. 
%Instances of other classes of systems that evidently possess low-dimensional transition manifolds can be found in Section~\ref{sec:examples}.
\end{remark}

\subsection{A measure for the quality of reaction coordinates}
\label{sec:transfer operators}

We will now \hl{present} a measure for evaluating the quality of reaction coordinates that is based on transfer operators\hl{, first derived in~\cite{Bittracher2017}}. The Perron--Frobenius operator $\mathcal{P}^t \colon L^1 \rightarrow L^1$ associated with the process $ X_t $ is defined by
$$
    \big(\mathcal{P}^t u\big)(y) = \int_\X u(x) \ts p^t_x(y) \ts dx.
$$
This operator can be seen as the push-forward of arbitrary starting densities, i.e., if $ X_0 \sim u $, then $ X_t\sim \mathcal{P}^tu $.

\hl{As $L^2_{1/\rho}\subset L^1$ (see~\cite[Remark~4.6]{Bittracher2017}) we can consider $\mathcal{P}^t$ as an operator on the inner product space $L^2_{1/\rho}$, where it has particularly advantageous properties~(see \cite{BaRo95,SchCa92,KNKWKSN18}).} Here, it is self-adjoint due to the reversibility of $X_t$. Moreover, under relatively mild conditions, it does not exhibit any essential spectrum \cite{SS13}. Hence, its eigenfunctions form an orthonormal basis of $L^2_{1/\rho}$ and the associated eigenvalues are real. Now, the significance of the dominant eigenpairs for the system's time scales is well-known~\cite{SS13}. This is the primary reason for the choice of the $L^2_{1/\rho}$-norm in Definition~\ref{def:reducibility}.

Let $ \theta_i^t $ be the eigenvalues of $\mathcal{P}^t$, sorted by decreasing absolute value, and $ \psi_i $ the corresponding eigenfunctions, where $i=0,1,\dots$. It holds that $\theta_0=1$ is independent of $t$, isolated and the sole eigenvalue with absolute value $1$. Furthermore, $\psi_0 = \rho$. The subsequent eigenvalues decrease monotonously to zero both for increasing index and time. That is,
$$
\lim_{i\rightarrow\infty} |\theta_i^t| = 0 \quad \text{and} \quad \lim_{t\rightarrow\infty} |\theta_i^t| = 0.
$$
The associated eigenfunctions $\psi_1,\psi_2,\dots$ can be interpreted as sub-processes of decreasing longevity in the following sense: Let $u\in L^2_{1/\rho}$, with $u = \sum_{i=0}^\infty \alpha_i \psi_i $, $\alpha_i\in\R$, then
$$
\mathcal{P}^t u = \sum_{i=0}^\infty \theta_i^t \alpha_i \psi_i \approx \sum_{i=0}^d \theta_i^t \alpha_i \psi_i
$$
since for the lag time $\tau>0$ as defined above, there exists an index $d\in\mathbb{N}$ such that $|\theta_i^t|\approx 0$ for all $t\geq \tau$ and all $i> d$. Hence, the major part of the information about the long-term density propagation of $X_t$ is encoded in the $d$ dominant eigenpairs.

\hl{
The operator $\mathcal{P}^t$ describes the evolution of densities of the full process $ X_t $. In order to monitor the dependence of densities on the reduced coordinate $\xi$ only, we first introduce the projection operator $\Pi_\xi \colon L^1(\X)\rightarrow L^1(\X)$,
\begin{equation}
\label{eq:projection_operator}	
\big(\Pi_\xi(u)\big)(y) = \mathbb{E}_\rho\big[ u(\mathbf{x})~\big|~\xi(\mathbf{x}) = \xi(y) \big],
\end{equation}
where the $\rho$-weighted expectation value is taken with respect to the random variable $\mathbf{x}$.
This operator is also known as the Zwanzig projection operator from statistical physics~\cite{ZHS16}. Intuitively, $\Pi_\xi$ averages a function over the individual level sets of $\xi$.

The \emph{effective transfer operator} $\mathcal{P}^t_\xi \colon L^1(\X)\rightarrow L^1(\X)$ associated with $\xi$ is then given by
$$
\mathcal{P}^t_\xi u = \Pi_\xi \big( \mathcal{P}^t( \Pi_\xi u)\big),
$$
see \cite{Bittracher2017}.
%Note that unlike $\mathcal{P}^t$, the operator family $\mathcal{P}^t_\xi$ no longer forms a semigroup in $t$.
We now want to preserve the statistics of the dominant long-term dynamics of $X_t$ under the projection onto $\xi$, i.e., 
\begin{equation}
\label{eq:density transport approximation}
\mathcal{P}^t u \approx \mathcal{P}^t_\xi u,
\end{equation}
for $t \geq \tau$, where $\tau$ is some lag time that is long enough for the fast processes, associated with the non-dominant eigenpairs, to have equilibrated. A sufficient condition for \eqref{eq:density transport approximation} is
\begin{equation*}
\Pi_\xi \psi_i \approx \psi_i, \quad i=0,\ldots,d,
\end{equation*}
that is, the dominant eigenfunctions $\psi_i$ must be almost constant along the level sets of $\xi$.
This motivates the following definition of a good reaction coordinate:
}

\begin{definition}
\label{def:good reaction coordinate}
Let $(\psi_i,\theta_i^t)$ be the eigenpairs of the Perron--Frobenius operator. Let $\tau>0$ and $d\in \mathbb{N}$ such that $\theta_i^t\approx 0$ for all $i > d$ and $t\geq \tau$. We call a function $\xi\colon\X\rightarrow \mathbb{R}^r$ a \emph{good reaction coordinate} if for all $i=0,\dots,d$ there exist functions $\tilde{\psi}_i\colon\mathbb{R}^r\rightarrow \R$ such that
\begin{equation}
\label{eq:eigenfunction parametrization}
\big\| \psi_i - \tilde{\psi}_i \circ \xi \big\|_\infty \approx 0.
\end{equation}
If condition~\eqref{eq:eigenfunction parametrization} is fulfilled, we say that $\xi$ (approximately) \emph{parametrizes} the dominant eigenfunctions.
\end{definition}
\hl{For a formal evaluation of the condition~\eqref{eq:eigenfunction parametrization}, see~\cite[Corollary~3.6]{Bittracher2017}.}

\subsection{Optimal reaction coordinates}

\label{sec:optimal reaction coordinates}

We now justify why reaction coordinates that are based on parametrizations of the transition manifold $\M$ indeed fulfill 
%\hl{[\ldots]}
condition~\eqref{eq:eigenfunction parametrization}. Let $Q \colon L^2_{1/\rho}\rightarrow L^2_{1/\rho}$ be the nearest-point projection onto $\M$, i.e., 
$$
{Q}(f) = \argmin_{g\in\M} \|f-g\|_{L^2_{1/\rho}}.
$$ 
Assume further that some parametrization $\gamma \colon \M\rightarrow \R^r$ of $\M$ is known, i.e., $\gamma$ is one-to-one on $\M$ and its image in $\R^r$.
\hl{Then} the reaction coordinate $\xi \colon \R^n\rightarrow\R^k$ defined by
\begin{equation}
\label{eq:optimal reaction coordinate}
\xi(x) := \big(\gamma\circ {Q}\big)(p^\tau_x)
\end{equation}
is good in the sense of Definition~\ref{def:good reaction coordinate} \hl{due to the following theorem:}

\begin{theorem}[{{\cite[Corollary~3.8]{Bittracher2017}}}]
\label{thm:reaction coordinate error}
Let the system be $(\varepsilon,r)$-reducible and $\xi$ defined as in~\eqref{eq:optimal reaction coordinate}. Then for all $i=0,\ldots,d$, there exist functions $\tilde{\psi}_i\colon\mathbb{R}^r\rightarrow \R$ such that
\begin{equation}
\label{eq:eigenfunction parametrization error}
\big\| \psi_i - \tilde{\psi}_i \circ \xi \big\|_\infty \leq \frac{\varepsilon}{|\theta_i^\tau|}.
\end{equation}
\end{theorem}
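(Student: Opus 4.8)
The plan is to exhibit the assignment $x\mapsto p^\tau_x$ as a ``feature map'' that reproduces each dominant eigenfunction of $\mathcal{P}^\tau$ up to the scalar $\theta_i^\tau$. Once this is established, replacing $p^\tau_x$ by its nearest point $Q(p^\tau_x)$ on $\M$ can perturb each such eigenfunction by no more than $\varepsilon/\abs{\theta_i^\tau}$, and the parametrization $\gamma$ merely relabels the point $Q(p^\tau_x)\in\M$ by the coordinate vector $\xi(x)\in\R^r$, which is exactly the information that $\tilde{\psi}_i$ is permitted to see.

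\emph{Step 1 (feature-map identity)}. Starting from $\mathcal{P}^\tau\psi_i=\theta_i^\tau\psi_i$, I would write out the integral defining $\mathcal{P}^\tau$ and substitute the detailed-balance relation $\rho(x)\,p^\tau_x(y)=\rho(y)\,p^\tau_y(x)$, which holds because $X_t$ is reversible; a one-line computation then gives $\theta_i^\tau\,\psi_i(x)=\rho(x)\,\innerprod{\psi_i}{p^\tau_x}_{L^2_{1/\rho}}$ for every $x\in\X$. Dividing by $\rho$ and setting $\varphi_i:=\psi_i/\rho$ (so $\varphi_0\equiv 1$), this becomes the clean identity $\varphi_i(x)=\theta_i^{-\tau}\innerprod{\psi_i}{p^\tau_x}_{L^2_{1/\rho}}$, which exhibits $\varphi_i(x)$ as a function of $p^\tau_x$ alone. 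These $\varphi_i$ are the eigenfunctions actually parametrized by the reaction coordinate; below I keep the notation $\psi_i$ of the statement, the corresponding bound for the forward eigenfunctions of $\mathcal{P}^\tau$ differing only by the factor $\rho$, which is bounded on the compact state space. This step uses only that $p^\tau_x\in L^2_{1/\rho}$ for every $x$ (Lemma~\ref{lem:L1 estimation}) and that the normalized $\psi_i$ lie in $L^2_{1/\rho}$, so that all the occurring inner products are finite.

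\emph{Step 2 (approximant and estimate)}. Since $\gamma$ is one-to-one on $\M$ and $Q(p^\tau_x)\in\M$, the coordinate $z=\xi(x)=\gamma\big(Q(p^\tau_x)\big)$ uniquely determines $\gamma^{-1}(z)=Q(p^\tau_x)\in\M\subset\widetilde{\M}\subset L^2_{1/\rho}$; accordingly I set $\tilde{\psi}_i(z):=\theta_i^{-\tau}\innerprod{\psi_i}{\gamma^{-1}(z)}_{L^2_{1/\rho}}$ on $\xi(\X)$ and extend it arbitrarily elsewhere, since only its values on $\xi(\X)$ enter $\norm{\psi_i-\tilde{\psi}_i\circ\xi}_\infty$. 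Subtracting the identities of Step~1 and using linearity of the inner product,
\[
\abs{\psi_i(x)-\tilde{\psi}_i(\xi(x))}=\frac{1}{\abs{\theta_i^\tau}}\,\abs{\innerprod{\psi_i}{p^\tau_x-Q(p^\tau_x)}_{L^2_{1/\rho}}}\le\frac{\varepsilon}{\abs{\theta_i^\tau}},
\]
where the inequality is Cauchy--Schwarz together with $\norm{\psi_i}_{L^2_{1/\rho}}=1$ (orthonormality of the eigenfunctions) and $\norm{p^\tau_x-Q(p^\tau_x)}_{L^2_{1/\rho}}=\min_{f\in\M}\norm{p^\tau_x-f}_{L^2_{1/\rho}}\le\varepsilon$, the last estimate being the defining inequality~\eqref{eq:def_reducibility} of $(\varepsilon,r)$-reducibility. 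Taking the supremum over $x\in\X$ yields~\eqref{eq:eigenfunction parametrization error}.

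I expect the genuine work to lie entirely in Step~1: it is the only place where the dynamics enters, through the eigenrelation together with reversibility, and one must keep careful track of the invariant-density weight $\rho$ and of which eigenfunction family — the forward eigenfunctions of $\mathcal{P}^\tau$ or, after dividing out $\rho$, the backward ones — is being parametrized, since this fixes the constant in front of $\varepsilon/\abs{\theta_i^\tau}$. Everything afterwards is Cauchy--Schwarz bookkeeping, modulo the two technical prerequisites already noted: $p^\tau_x\in L^2_{1/\rho}$, so that $Q$ and the pairings are well defined, and $\M\subset\widetilde{\M}\subset L^2_{1/\rho}$, which is precisely the reason for the slightly strengthened notion of reducibility adopted in Definition~\ref{def:reducibility}.
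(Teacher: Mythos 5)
The paper does not prove Theorem~\ref{thm:reaction coordinate error} itself but imports it from \cite[Corollary~3.8]{Bittracher2017}; your argument is essentially the proof given there, and it is correct. The reproducing identity $\theta_i^\tau\,\psi_i(x)=\rho(x)\,\langle \psi_i,\,p^\tau_x\rangle_{L^2_{1/\rho}}$ obtained from the eigenrelation plus detailed balance, followed by the choice $\tilde{\psi}_i:=(\theta_i^\tau)^{-1}\langle\psi_i,\,\gamma^{-1}(\cdot)\rangle_{L^2_{1/\rho}}$, Cauchy--Schwarz, the normalization $\|\psi_i\|_{L^2_{1/\rho}}=1$, and the reducibility bound $\|p^\tau_x-Q(p^\tau_x)\|_{L^2_{1/\rho}}\leq\varepsilon$ is exactly the intended route. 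One remark on the caveat you raise in Step~1: the bound with constant exactly $\varepsilon/|\theta_i^\tau|$ is indeed the one for $\varphi_i=\psi_i/\rho$ (the transfer-operator eigenfunctions of the cited reference, with $\varphi_0\equiv 1$), and this is the reading the statement must be given --- for the literal Perron--Frobenius eigenfunctions it would already fail at $i=0$, since $\psi_0=\rho$ is in general not uniformly close to any function of $\xi$. Your suggested repair of multiplying back by the bounded factor $\rho$ does not quite restore the literal statement, because $\rho(x)\,\tilde{\psi}_i(\xi(x))$ is no longer of the form $\hat{\psi}_i(\xi(x))$ unless $\rho$ itself factors through $\xi$; the clean conclusion is simply that the theorem concerns the $\rho$-normalized eigenfunctions, which is also what Definition~\ref{def:good reaction coordinate} implicitly requires.
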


Let us add two remarks:
\begin{enumerate}
\item The choice of the $L^2_{1/\rho}$-norm in Definition~\ref{def:reducibility} is crucial for Theorem~\ref{thm:reaction coordinate error} to hold.
\item \hl{Metastable systems typically exhibit a time scale gap after the $d$ dominant eigenvalues, i.e.,
$$
\frac{\left|\theta^t_d-\theta^t_{d+1}\right|}{\left|\theta^t_{d+1}-\theta^t_{d+2}\right|} \gg 1 \quad \text{for suitably large } t>0.
$$ 
Therefore, $\tau$ can always be chosen such that $\left|\theta_{d+1}^\tau\right|$ is close to zero and $\left|\theta_{i}^\tau\right|, i=0,\ldots,d,$ is still relatively large. Consequently, the denominator in~\eqref{eq:eigenfunction parametrization error} is not too small, and thus the RC~\eqref{eq:optimal reaction coordinate} is indeed good according to Definition~\ref{def:good reaction coordinate}.}
\end{enumerate}

The main task for the rest of the paper is now the numerical computation of an (approximate) parametrization $\gamma$ of $\M$.

%Unfortunately, due to its implicit definition, the exact transition manifold $\M$ and the projection $Q$ are not directly accessible. However, as $\M$ is surrounded $\varepsilon$-close by the fuzzy transition manifold $\widetilde{\M}$, it is reasonable to assume that if the local curvature of $\M$ is low, the projection $Q$ can be approximated by the identity, and $\mathcal{P}$ can be approximated by an
%``approximate parametrization'' $\widetilde{\mathcal{P}}:\widetilde{\M}\rightarrow \R^r$ of the fuzzy transition manifold $\widetilde{\M}$, which will be by identified by a data-based manifold learning method. Note however that this is a heuristic argumentation for which no formal error estimates exist yet.
%We will then use
%$$
%\tilde{\xi}(x) := \widetilde{\mathcal{P}}(p^t_x).
%$$
%as the ``practical'' reaction coordinate.
%

\subsection{Whitney embedding of the transition manifold}
\label{sec:whitney embedding}

One approach to find a parametrization of $\M$, proposed by the authors in~\cite{Bittracher2017}, is to first embed $\M$ into a more accessible Euclidean space and to parametrize the embedded manifold. In order to later compare it with our new method, we will briefly describe this approach here.

To construct an embedding $\mathcal{E}$ that preserves the topological structure of $\M$, without prior knowledge about $\M$, a variant of the Whitney embedding theorem can be used. It extends the classic Whitney theorem to arbitrary Banach spaces and was proven by Hunt and Kaloshin in~\cite{HuKa99}.

\hl{
\begin{theorem}[Whitney embedding theorem in Banach spaces,~\cite{HuKa99}]
\label{thm:Whitney embedding theorem}
Let $\mathbb{V}$ be a Banach space and let $\mathbb{K}\subset\mathbb{V}$ be a manifold of dimension $r$. Let $k>2r$ and let $\alpha_0 = \frac{k-2d}{k(d+1)}$. Then, for all $\alpha\in(0,\alpha_0)$, for almost every (in the sense of prevalence) bounded linear map $\mathcal{F} \colon \mathbb{V}\rightarrow\R^k$ there exists a $C>0$ such that for all $x,y\in\mathbb{K}$,
	$$
	C \left\|\mathcal{F}(x)-\mathcal{F}(y)\right\|_2^\alpha \geq \left\|x-y\right\|_\mathbb{V},
	$$
	where $\|\cdot\|_2$ denotes the Euclidean norm in $\R^k$.
	In particular, almost every $\mathcal{F}$ is one-to-one on $\mathbb{K}$ and its image, and $\mathcal{F}^{-1}\big|_{\mathcal{F}(\mathbb{K})}$ is Hölder continuous with exponent $\alpha$. 
\end{theorem}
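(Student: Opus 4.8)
The statement is quoted verbatim from Hunt and Kaloshin, so in the paper it is simply cited; the following is how one would reconstruct the argument. The proof is a \emph{prevalence} (probabilistic-method) argument, so the first task is to produce an explicit finite-dimensional ``probe'': a finite-dimensional space $\mathcal{S}$ of bounded linear maps $\mathbb{V}\to\R^k$, carrying a compactly supported probability measure $\mu$ with a bounded Lebesgue density, such that for \emph{every} fixed $\mathcal{F}_0\in L(\mathbb{V},\R^k)$ the desired property holds for $\mu$-almost every perturbation $\mathcal{F}=\mathcal{F}_0+\mathcal{G}$, $\mathcal{G}\sim\mu$. The heart of the construction is an \emph{anti-concentration} estimate: one builds $\mathcal{S}$ (using Hahn--Banach functionals that are almost norming on a dense subset of secant directions, together with the compactness of $\mathbb{K}$) so that, uniformly over all $c\in\R^k$ and all unit vectors $v$ in the secant set $\{(x-y)/\norm{x-y}_{\mathbb{V}}:x\neq y\in\mathbb{K}\}$,
\[
\mu\big(\{\mathcal{G}:\norm{\mathcal{G}v-c}_2\le s\}\big)\le C\,s^k,\qquad s>0.
\]
Since a finite-dimensional probe can only directly control finitely many directions, obtaining this bound \emph{uniformly} over the infinite-dimensional secant set is the main obstacle, and it is precisely here that the box-counting dimension $d$ of $\mathbb{K}$ (which for a compact $C^1$ $r$-manifold equals $r$) and its thickness enter and limit the attainable exponent.

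Given such a $\mu$, the second step is a multiscale covering/union-bound argument. For each dyadic scale $\delta_m=2^{-m}$, cover $\mathbb{K}$ by roughly $\delta_m^{-d}$ balls of radius $\delta_m$; for a pair of centers at distance $\rho\ge\delta_m$, the event that $\mathcal{F}$ violates $C\norm{\mathcal{F}(x)-\mathcal{F}(y)}_2^\alpha\ge\norm{x-y}_{\mathbb{V}}$ forces $\norm{\mathcal{F}(x-y)}_2\lesssim\rho^{1/\alpha}$, which by the anti-concentration estimate has $\mu$-probability $\lesssim\rho^{k(1-\alpha)/\alpha}$. One must additionally control the oscillation of $\mathcal{F}$ within each ball (which requires finer inner coverings and is what degrades the exponent, producing the factor $d+1$ in $\alpha_0$), organize the pairs by separation scale, and sum the failure probabilities over all pairs and all scales $\delta_m$. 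Carrying out this bookkeeping, the resulting series is summable exactly when $\alpha<\alpha_0=\frac{k-2d}{k(d+1)}$; the hypothesis $k>2r$ (hence $k>2d$) is what makes $\alpha_0$ positive.

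Finally, a Borel--Cantelli argument shows that for $\mu$-almost every $\mathcal{G}$ the bi-Hölder inequality holds simultaneously at all sufficiently small scales, and a short patching argument over the (finitely many, at any fixed scale) balls upgrades this to the inequality for all $x,y\in\mathbb{K}$ with a single constant $C$; in particular $\mathcal{F}$ is one-to-one on $\mathbb{K}$ and $\mathcal{F}^{-1}\big|_{\mathcal{F}(\mathbb{K})}$ is $\alpha$-Hölder. Because the density of $\mu$ is bounded, the estimates are unaffected by the shift $\mathcal{G}\mapsto\mathcal{F}_0+\mathcal{G}$ (the small-ball bound above was stated uniformly in $c$ for this reason), so the same probe $(\mathcal{S},\mu)$ works for every base point $\mathcal{F}_0$, which is exactly the assertion of prevalence. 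I expect the genuinely hard part to be the probe construction with the uniform small-ball bound over the secant set; the covering estimate and the Borel--Cantelli step are comparatively bookkeeping, and the precise value of $\alpha_0$ simply falls out of balancing the exponents there.
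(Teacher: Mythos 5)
The paper contains no proof of this theorem---it is quoted directly from Hunt and Kaloshin \cite{HuKa99}---so there is no internal argument to compare against; your sketch is a faithful high-level outline of the prevalence proof in that reference (finite-dimensional probe space of linear maps, a small-ball estimate uniform in the shift so that every base point $\mathcal{F}_0$ is covered and prevalence genuinely follows, then a dyadic covering, union bound, and Borel--Cantelli). The one point worth sharpening: in \cite{HuKa99} the denominator of $\alpha_0$ is expressed through the \emph{thickness exponent} $\tau$ of $\mathbb{K}$, which quantifies how well $\mathbb{K}$ is approximated by finite-dimensional linear subspaces and is exactly the mechanism by which the finite-rank probe loses control over the infinite-dimensional secant set; the form $k(d+1)$ in the paper's statement then comes from the universal bound $\tau\le d$, with $d$ the box-counting dimension, equal to $r$ for the compact manifold considered here (which also resolves the paper's $d$-versus-$r$ notation). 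Your attribution of that loss to oscillation control within covering balls is the same phenomenon described less precisely, so the sketch stands as a correct account of the cited argument.
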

}

\hl{
In particular, almost every such map $\mathcal{F}$ is a homeomorphism between $\mathbb{K}$ and its image in $\mathbb{R}^k$, which in short is called an \emph{embedding} of $\mathbb{K}$~(see e.g.~\cite[\S 18]{Munkres2000}). This means that the image $\mathcal{F}(\mathbb{M})$ will again be an $r$-dimensional manifold in $\mathbb{R}^k$, provided that $k>2r$.
We will apply this result to the transition manifold, i.e., $\mathbb{V}=L^2_{1/\rho}$ and $\mathbb{K}=\M$, and for simplicity restrict ourselves to the lowest embedding dimension, i.e., $k=2r+1$. Any ``randomly selected'' continuous map $\mathcal{F}:L^2_{1/\rho} \rightarrow \mathbb{R}^{2r+1}$ then is an embedding of $\M$.
}

\hl{
Unfortunately, there is no practical way to randomly draw from the space of continuous maps on $L^2_{1/\rho}$ directly. Instead of arbitrary continuous maps, we therefore restrict our considerations to maps $\mathcal{F} \colon L^2_{1/\rho}\rightarrow \mathbb{R}^{2r+1}$ of the form
\begin{align}
	\mathcal{F}(f) &:= \int_{\mathbb{X}} \eta(x') f(x') \ts dx', \label{eq:integral_embedding}
	\shortintertext{where}
\eta(x)&:= A x,\quad A\in \R^{(2r+1)\times d},~A\sim \sigma, \notag
\end{align}
where $\sigma$ is some distribution on the (finite-dimensional) space of $(2r+1)\times d$-matrices (e.g., Gaussian matrices). The linear map $\eta \colon \X\rightarrow \R^{2r+1}$, called \emph{feature map}, is bounded due to the boundedness of $\X$. Maps of the form~\eqref{eq:integral_embedding} are therefore continuous on $L^1$, and thus in particular on the subspace $L^2_{1/\rho}$.
}

\hl{
By drawing from the distribution $\sigma$ of the matrices $A$, we can effectively sample maps of form~\eqref{eq:integral_embedding}. We assume from now on that the embedding property from Theorem~\ref{thm:Whitney embedding theorem} not only holds for a prevalent subset of general continuous maps, but already for a prevalent subset of the maps of form~\eqref{eq:integral_embedding}. In other words, we assume that a randomly drawn function of form~\eqref{eq:integral_embedding} with linear $\eta$ almost surely is an embedding of $\M$. While the validity of this assumption is far from obvious for general manifolds, there is empirical evidence that for the typically ``smooth'' transition manifolds $\M$, it is indeed fulfilled. Still, this necessary restriction to the class of linear embedding functions represents a weak point of the transition manifold method that will later be solved by using kernel embeddings instead.
}

The \emph{dynamical embedding} of a point $x\in\mathbb{X}$ is then defined by 
\begin{equation}
\label{eq:dynamical embedding}
	\mathcal{E}(x) := \mathcal{F} (p^t_x) =  \int \eta(x') p^t_x(x') \ts dx'.
\end{equation}
This is the Euclidean representation of the density $p^t_x$, and the set $\{\mathcal{E}(x) \mid x\in\mathbb{X}\}\subset\mathbb{R}^{\hl{2r+1}}$ is the Euclidean representation of the fuzzy transition manifold. It again clusters around an $r$-dimensional manifold in $\R^{2r+1}$, namely the image $\mathcal{F}(\M)$ of the transition manifold under~$\mathcal{F}$:

\begin{proposition} 
\label{prop:embedded fuzzy manifold}
Let the process $X_t$ be $(\varepsilon,r)$-reducible with transition manifold $\M$, and $\mathcal{F} \colon L^2_{1/\rho}\rightarrow \R^{2r+1}$ and $\mathcal{E} \colon \R^n\rightarrow \R^{2r+1}$ defined as in~\eqref{eq:integral_embedding} and~\eqref{eq:dynamical embedding}. Then
$$
\inf_{v\in\mathcal{F}(\M)} \| v - \mathcal{E}(x) \|_\infty \leq \|\eta\|_\infty \ts \varepsilon \quad \text{for all } x\in\X.
$$
\end{proposition}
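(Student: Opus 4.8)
The plan is to reduce the claim to a single elementary norm estimate for the linear map $\mathcal{F}$. Fix an arbitrary $x\in\X$. Since the process is $(\varepsilon,r)$-reducible, Definition~\ref{def:reducibility} provides a function $f\in\M$ with $\|f-p^\tau_x\|_{L^2_{1/\rho}}\le\varepsilon$; here $f$ lies in $\M\subset\widetilde\M\subset L^2_{1/\rho}\subset L^1$, so $\mathcal{F}(f)$ is well defined and $\mathcal{F}(f)\in\mathcal{F}(\M)$. Consequently
\[
\inf_{v\in\mathcal{F}(\M)}\|v-\mathcal{E}(x)\|_\infty \;\le\; \|\mathcal{F}(f)-\mathcal{E}(x)\|_\infty \;=\; \|\mathcal{F}(f)-\mathcal{F}(p^\tau_x)\|_\infty \;=\; \|\mathcal{F}(f-p^\tau_x)\|_\infty,
\]
where the last two equalities use $\mathcal{E}(x)=\mathcal{F}(p^\tau_x)$ from~\eqref{eq:dynamical embedding} and the linearity of $\mathcal{F}$, which is immediate from its integral form~\eqref{eq:integral_embedding}.

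It then remains to establish the operator-type bound $\|\mathcal{F}(g)\|_\infty\le\|\eta\|_\infty\,\|g\|_{L^2_{1/\rho}}$ for every $g\in L^2_{1/\rho}$; applying it to $g=f-p^\tau_x$ finishes the proof. Writing $\eta=(\eta_1,\dots,\eta_{2r+1})$, the $j$-th component of $\mathcal{F}(g)$ equals $\int_\X\eta_j(x')g(x')\,dx'$, whose absolute value is at most $\|\eta_j\|_\infty\int_\X|g(x')|\,dx'=\|\eta_j\|_\infty\,\|g\|_{L^1}$. The only step needing a moment's care is passing from the $L^1$-norm to the $L^2_{1/\rho}$-norm: since $\rho$ is a probability density, Cauchy--Schwarz applied to $|g|=\frac{|g|}{\sqrt\rho}\cdot\sqrt\rho$ gives $\|g\|_{L^1}\le\big(\int_\X\rho\big)^{1/2}\big(\int_\X g^2/\rho\big)^{1/2}=\|g\|_{L^2_{1/\rho}}$. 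Taking the maximum over $j$ and using $\|\eta\|_\infty=\max_j\|\eta_j\|_\infty$ yields the asserted bound, and since $x\in\X$ was arbitrary the estimate holds uniformly.

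There is no genuine obstacle here; the argument is short, and the only conceptual point is the contractive embedding $L^2_{1/\rho}\hookrightarrow L^1$ (equivalently, the normalization $\int_\X\rho=1$), which is precisely what converts the $L^2_{1/\rho}$-closeness guaranteed by reducibility into an $\infty$-norm bound in $\R^{2r+1}$ with the stated constant $\|\eta\|_\infty$. One should also note in passing that $\|\eta\|_\infty<\infty$, which holds because $\eta$ is linear and $\X$ is bounded, as already remarked after~\eqref{eq:integral_embedding}.
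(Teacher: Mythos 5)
Your proof is correct and follows essentially the same route as the paper's: pick the nearest element of $\M$ guaranteed by $(\varepsilon,r)$-reducibility, use linearity of $\mathcal{F}$ and the componentwise bound $\|\mathcal{F}(g)\|_\infty\le\|\eta\|_\infty\|g\|_{L^1}$, and then pass from $L^1$ to $L^2_{1/\rho}$. The only cosmetic difference is that you prove the embedding $\|\cdot\|_{L^1}\le\|\cdot\|_{L^2_{1/\rho}}$ directly via Cauchy--Schwarz, whereas the paper cites it from \cite[Remark~4.6]{Bittracher2017}.
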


\begin{proof}
Let $x\in\X$. \hl{By the $(\varepsilon,r)$-reducibility of $X_t$ (Definition~\ref{def:reducibility}), and the fact that $\M\subset \widetilde{\M}$, i.e., $\M$ \hl{itself} consists of transition densities, there exists an $x^*\in\X$ such that $p^t_{x^*}\in \M$ and} $\|p^t_x - p^t_{x^*}\|_{L^2_{1/\rho}}\leq \varepsilon$. Thus we have
\begin{align*}
\inf_{v\in\mathcal{F}(\M)} \| v - \underbrace{\mathcal{E}(x)}_{\hl{=\mathcal{F}(p^t_x)}}\|_\infty &\leq \| \mathcal{F}(p^t_{x^*}) - \mathcal{F}(p^t_x) \|_\infty \\
&= \left \| \int_\X \eta(x') \left( p^t_{x^*}(x') - p^t_{x}(x') \right) dx' \right\|_\infty \\
&\le \|\eta\|_\infty \underbrace{\|p^t_{x^*} - p^t_x\|_{L^1}}_{\leq \|p^t_{x^*} - p^t_x\|_{L^2_{1/\rho}}}  \leq \|\eta\|_\infty \ts \varepsilon,
\end{align*}
\hl{where $\|\cdot\|_{L^1} \leq \|\cdot \|_{L^2_{1/\rho}}$ was derived in~\cite[Remark~4.6]{Bittracher2017}.}
\end{proof}

\begin{remark}
Together, Theorem~\ref{thm:Whitney embedding theorem} and Proposition~\ref{prop:embedded fuzzy manifold} guarantee at least a minimal degree of well-posedness of the embedding problem: The embedded manifold $\mathcal{F}(\M)$ has the same topological structure as $\M$, and $\mathcal{F}(\widetilde{\M})$ clusters closely around it (if $\|\eta\|_\infty$ is small). However, guarantees on the \emph{condition} of the problem cannot be made. The manifold $\M$ will in general be distorted by $\mathcal{F}$, to a degree that might pose problems for numerical manifold learning algorithms. This problem is illustrated in Figure~\ref{fig:Bad Embedding}. Such a situation typically occurs if some of the components of the embedding $\mathcal{F}$ are strongly correlated.

Additionally, the Whitney embedding theorem cannot guarantee that the \emph{fuzzy} transition manifold $\widetilde{\M}$ will be preserved under the embedding, as analytically $\widetilde{\M}$ is not a manifold. Thus, $\mathcal{F}$ is in general not injective on $\widetilde{\M}$.

\begin{figure}[h]
\centering
\includegraphics[width=.5\textwidth]{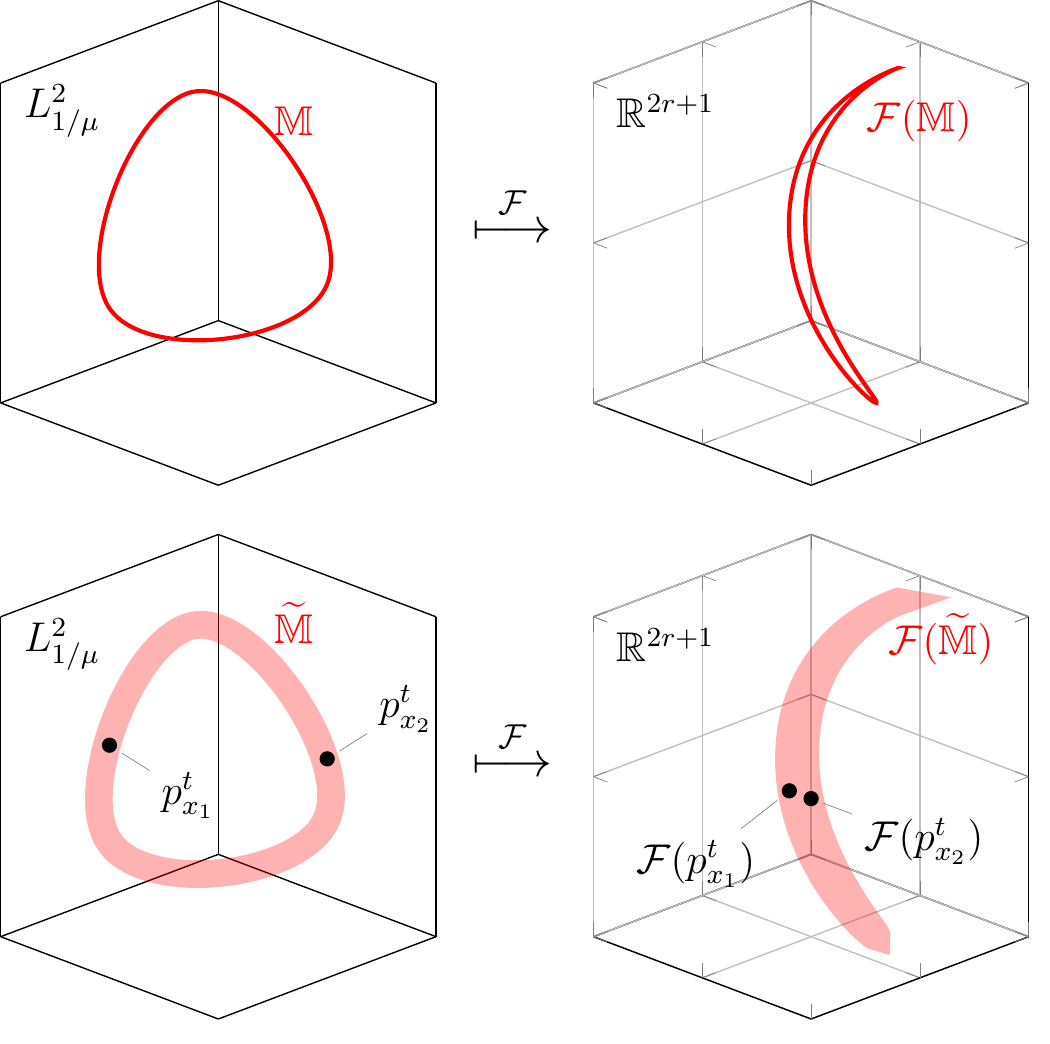}
\caption{
Illustration of the consequences of bad choices for the embedding function. While the topology of the transition manifold $\mathbb{M}$ is preserved under the embedding, the relative distances between its points may be heavily distorted (top row). Intuitively, points that lie on distant parts of the manifold might be mapped closely together. As a consequence, a manifold learning algorithm based on distances between a finite number of samples of $\mathcal{F}(\widetilde{\mathbb{M}})$ would have difficulties learning the (in this case circular) topology of $\mathbb{M}$ (bottom row).}
\label{fig:Bad Embedding}
\end{figure}

\end{remark}

\subsection{Data-driven algorithm for parametrizing the transition manifold}

\hl{
Due to the implicit definition of $\M$, the embedded transition manifold $\mathcal{F}(\M)$ is hard to analyze directly. However, as $\M\subset\widetilde{\M}$ and $\mathcal{F}(\widetilde{\M})$ concentrates $\big(\|\eta\|_\infty\varepsilon\big)$-closely around $\mathcal{F}(\M)$, one can expect that any parametrization of the dominant directions of $\mathcal{F}(\widetilde{\M})$ is also a good parametrization of $\mathcal{F}(\M)$. We now explain how $\mathcal{F}(\widetilde{\M})$ can be sampled numerically and how this sample can be parametrized.

Let $\mathbb{X}_N=\{x_1,\dots,x_N\}$ be a finite sample of state space points, which covers the ``dynamically relevant'' part of state space, i.e., the regions of $\mathbb{X}$ of substantial measure $\rho$. The exact distribution of the sampling points is not important here. If $\mathbb{X}$ is bounded or periodic, $\mathbb{X}_N$ could be drawn from the uniform distribution or chosen to form a regular grid. In practice, it often consists of a sample of the system's equilibrium measure $\rho$.

The set $\mathcal{F}\big(\{p^t_x~|~x\in \X_N\}\big)$ will serve as our sample of $\mathcal{F}(\widetilde{\M})$. Its elements can be computed numerically in the following way: Let $X_\tau(x_0,\omega)$ denote the end point of the time-$\tau$ realization of $X_t$ starting in $x_0\in\X$ and outcome $\omega\in\Omega$, where $\Omega$ is the sample space underlying the process $X_t$. For $x\in\X$, $\tau>0$ fixed as in Definition~\ref{def:reducibility} and arbitrarily chosen $\{\omega_1,\ldots,\omega_M\}\subset \Omega$, let $y^{(k)}(x) := X_{\tau}(x,\omega_k)$. In short, the $y^{(k)}(x), k=1,\ldots,n$, sample the density $p^t_x$.  In practice, the $y^{(k)}(x)$ will be generated by multiple runs of a numerical SDE solver starting in $x$ with $M$ different random seeds (``bursts of simulations'').

With the samples $y^{(k)}(x)$, we approximate $\mathcal{F}(p^t_x)$ by its Monte Carlo estimator:
\begin{align*}
\label{eq:empirical dynamical embedding}
	\mathcal{F}(p^t_x) = \int\eta(x') p^t_x(x')\ts dx' &\approx \underbrace{\frac{1}{M}\sum_{k=1}^M\eta(y^{(k)}(x))}_{\textstyle =:\widehat{\mathcal{E}}(x)}.
\end{align*}
Due to Proposition~\ref{prop:embedded fuzzy manifold}, the point cloud $\mathcal{F}\big(\{p^t_x~|~x\in\X_N\}\big)$, and for a large enough burst size $M$ also its empirical estimator $\widehat{\mathcal{E}}\big(\X_N\big)$, then clusters around the $r$-dimensional manifold $\mathcal{F}(\M)$ in $\mathbb{R}^{2r+1}$. 

Parametrizing $\widehat{\mathcal{E}}\big(\X_N\big)$, i.e., finding the dominant nonlinear directions in this point cloud in $\R^{2r+1}$, now can be accomplished by a variety of classical manifold learning methods. We assume that we have a method at our disposal that is able to discover the underlying $r$-dimensional manifold within the point cloud $\widehat{\mathcal{E}}\big(\X_N\big)$, and assign each of the points $\{\widehat{\mathcal{E}}(x)~|~x\in\X_N\}$ a value $\tilde{\gamma}\big(\widehat{\mathcal{E}}(x)\big)\in \R^r$ according to its position on that manifold. For examples of such algorithms see Section~\ref{sec:kerneltrick}. Hence, $\tilde{\gamma} \colon \widehat{\mathcal{E}}(\X_N)\rightarrow \R^k$ can be seen as an approximate parametrization of $\mathcal{F}(\M)$, defined however only at the points $\widehat{\mathcal{E}}(\X_N)$. Any parametrization of $\mathcal{F}(\M)$ in turn corresponds to a parametrization of $\M$, due to $\mathcal{F}$ being an embedding. Finally, any parametrization of $\M$ corresponds to a good reaction coordinate due to Theorem~\ref{thm:reaction coordinate error}. Thus, the map $\xi(x) \colon \X_N\rightarrow \R^r$,
$$
\xi(x) := \tilde{\gamma}\big(\widehat{\mathcal{E}}(x)\big),
$$
forms a good reaction coordinate. Note however that it is only defined on the sample points~$\X_N$.

The strategy of computing reaction coordinates by embedding densities sampled from $\widetilde{\M}$ into $\R^{2r+1}$ by a random linear map and learning a parametrization of the embedded manifold was first presented in~\cite{Bittracher2017}. The following algorithm summarizes the overall procedure:
}

\begin{algorithm}
\floatname{algorithm}{Algorithm}
\caption{\hl{Reaction coordinate computation based on Whitney embeddings.}}
\label{algo:WhitneyRC}
\begin{algorithmic}[1]
\REQUIRE{Transition manifold dimension $r$, intermediate lag time $\tau$, \hl{matrix distribution $\sigma$}.}
%\ENSURE{Reaction coordinate $\xi$, evaluated at chosen points.}
\STATE Choose test points $\X_N=\{x_1,\ldots,x_N\}$ that cover the relevant parts of state space.
\STATE \hl{Randomly draw a matrix $A\in\R^{(2r+1)\times d}$ from $\sigma$. Define the map $\eta \colon x\mapsto Ax$.}
\hl{\FOR {$i=1,\ldots,N$}
	{\FOR {$l=1,\ldots,M$}
		\STATE {Simulate trajectory of length $\tau$ with new random seed. Let the end point be denoted by $y_i^{(l)}$.}
		\ENDFOR
		}
\ENDFOR}
\STATE Compute the embedded empirical densities as
$
z_i \gets \frac{1}{M} \sum_{j=1}^M \eta\big(y_i^{(j)}\big) \in \mathbb{R}^{2r+1}.
$
%\STATE Compute the distance matrix $D\in\R^{N\times N}$ with
%$
%D_{ij} = \|z_i - z_j\|_2.
%$
%\hl{[\ldots]}
\STATE Apply a \hl{nonlinear} manifold learning algorithm to $\{z_i~|~i=1,\ldots,N\}$. Let $\widetilde{\gamma}(z_i)\in \R^r$ denote the resulting parametrization of the embedded test points.
\ENSURE An $r$-dimensional reaction coordinate evaluated at the test points:
\begin{equation*}
    \setlength{\abovedisplayskip}{0pt}
    \setlength{\belowdisplayskip}{0pt}
    \xi(x_i) := \widetilde{\gamma}(z_i),\quad i=1,\dots,N.
\end{equation*}
\end{algorithmic}
\end{algorithm}

\section{Kernel-based parametrization of the transition manifold}
\label{sec:Kernel-based learning of the transition manifold}

The approach described above for learning a parametrization of the transition manifold $\M$ by embedding it into Euclidean spaces requires a priori knowledge of the dimension of $\M$.
Also, more importantly, $\M$ might be strongly distorted by the embedding \hl{$\mathcal{F}$}, as described in Section~\ref{sec:whitney embedding}. The kernel-based parametrization, which is the main novelty of this work, will address both of these shortcomings by embedding $\M$ into reproducing kernel Hilbert spaces.

\subsection{Kernel reformulation of the embedding algorithm}
\label{sec:kerneltrick}

\hl{Manifold learning} algorithms that can be used in Algorithm~\ref{algo:WhitneyRC} include Diffusion Maps~\cite{CKLMN08}, Multidimensional Scaling~\cite{Young2013,Kruskal1964A}, and Locally Linear Embedding \cite{Roweis2000}. These, and many others, require only a notion of distance between pairs of data points. In our case, this amounts to the Euclidean distances between embedded points, i.e., $\big\| {\mathcal{E}}(x_i) - {\mathcal{E}}(x_j)\big\|_2$, which can be computed by the Euclidean inner products $\big\langle {\mathcal{E}}(x_i),{\mathcal{E}}(x_j)\big\rangle$, as
\begin{equation*}
	\big\|\mathcal{E}(x_i) - \mathcal{E}(x_j)\big\|_2^2  = \big\langle {\mathcal{E}}(x_i),{\mathcal{E}}(x_i)\big\rangle - 2\big\langle {\mathcal{E}}(x_i),{\mathcal{E}}(x_j)\big\rangle + \big\langle {\mathcal{E}}(x_j),{\mathcal{E}}(x_j)\big\rangle.
\end{equation*}
Other compatible algorithms such as Principal Component Analysis are based directly on the inner products. The inner products can be written as
\begin{equation*}
	\big\langle {\mathcal{E}}(x_i),{\mathcal{E}}(x_j)\big\rangle = \iint \big\langle \eta(y_i),\eta(y_j)\big\rangle \ts p^t_{x_i}(y_i) \ts p^t_{x_j}(y_j) \ts dy_i \ts dy_j,
\end{equation*}
and the empirical counterpart is
\begin{equation*}
	\big\langle \widehat{\mathcal{E}}(x_i),\widehat{\mathcal{E}}(x_j)\big\rangle = \frac{1}{M^2}\sum_{l_1,l_2=1}^M \big\langle \eta(y_i^{(l_1)}),\eta(y_j^{(l_2)})\big\rangle.
\end{equation*}
However, rather than \emph{explicitly} computing the inner product between the features on the right-hand side, we now assume that it can be computed \emph{implicitly} by using  a \emph{kernel function} $k \colon \X\times\X\rightarrow \mathbb{R}$, i.e.,
\begin{equation}
\label{eq:kernel assumption}
 k(y_i,y_j) = \big\langle \eta(y_i),\eta(y_j)\big\rangle.
\end{equation}
This assumption, called the \emph{kernel trick}, is commonly used to avoid the costly computation of inner products between high-dimensional features.
However, instead of defining the kernel $k$ based on previously chosen features, one typically considers kernels that implicitly define high- and possibly infinite-dimensional feature spaces. In this way, we are able to avoid the choice of \hl{the feature map $\eta$} altogether.

Kernels with this property span a so-called \emph{reproducing kernel Hilbert space}:
\begin{definition}[Reproducing kernel Hilbert space~\cite{Scholkopf2001}]
Let $k\colon\mathbb{X}\times\mathbb{X}\rightarrow\mathbb{R}$ be a positive definite function. \hl{A Hilbert space} $\mathbb{H}$ of functions $f\colon\mathbb{X}\rightarrow\mathbb{R}$, together with the corresponding inner product $\langle\cdot,\cdot\rangle_\mathbb{H}$ \hl{and norm $\|\cdot\|_\H=\sqrt{\langle\cdot,\cdot\rangle_\mathbb{H}}$} which fulfills
\begin{enumerate}[label=(\roman*)]
\item $\mathbb{H}= \overline{\operatorname{span}\{k(x,\cdot) \mid x\in\mathbb{X}\}}$, and
\item $\langle f,k(x,\cdot)\rangle_\mathbb{H} = f(x)$ for all $f\in\mathbb{H}$ \label{itm:reproducing property}
\end{enumerate}
is called the \emph{reproducing kernel Hilbert space (RKHS)} associated with the \emph{kernel} $k$.
\end{definition}

\hl{Here, $\overline{A}$ denotes the completion of a set $A$ with respect to $\|\cdot\|_\H$.} Requirement~\ref{itm:reproducing property} implies that 
\begin{equation}
\label{eq:canonical featuremap}
\langle k(x,\cdot),k(x',\cdot)\rangle_{\mathbb{H}} = k(x,x') \quad \text{for all } x,x' \in \mathbb{X}.
\end{equation}
\hl{The inner product between general functions $f,g\in \operatorname{span}\left\{k(x,\cdot)~|~x\in\X\right\}$ can therefore be expressed as the weighted sum of kernel evaluations: Let 
$$
f=\sum_{i} \alpha_i\ts k(x_i,\cdot),\quad g=\sum_{j} \beta_j\ts k(x'_j,\cdot),
$$
where the selection of points $x_i,x'_j$ depends on $f$ and $g$, respectively. Then
$$
\left\langle f,g\right\rangle_\mathbb{H} = \sum_{i,j} \alpha_i \beta_j\ts k(x_i,x'_j).
$$
For functions on the boundary of $\operatorname{span}\left\{k(x,\cdot)~|~x\in\X\right\}$, the inner product is constructed by the usual limit procedure.
}

The map $\eta \colon x\mapsto k(x,\cdot)$ can be regarded as a function-valued feature map (the so-called \emph{canonical feature map}). However, each positive definite kernel is guaranteed to also possess a feature map of at most countable dimension\hl{:}

\begin{theorem}[{Mercer's theorem~\cite{Mercer1909}}] \label{thm:Mercer}
Let $k$ be a positive definite kernel and $\nu$ be a finite Borel measure with support $\mathbb{X}$. Define the integral operator $\mathcal{T}_k \colon L^2_\nu\rightarrow L^2_\nu$ by
\begin{equation}
\label{eq:integral operator}
\mathcal{T}_k f = \int k(\cdot,x)f(x) \ts d\nu(x).
\end{equation}
Then there is an orthonormal basis $\{\sqrt{\lambda_i} \ts \varphi_i\}$ of $\,\mathbb{H}$ consisting of eigenfunctions $ \varphi_i $ of $\mathcal{T}_k$ rescaled with the square root of the corresponding nonnegative eigenvalues $ \lambda_i $ such that
\begin{equation}
\label{eq:mercer representation}
k(x,x') = \sum_{i=0}^\infty \lambda_i\varphi_i(x)\varphi_i(x') \quad \text{for all } x,x'\in\mathbb{X}.
\end{equation}
\end{theorem}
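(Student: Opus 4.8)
The plan is to diagonalize $\mathcal{T}_k$ via the spectral theorem for compact self-adjoint operators, then promote the abstract $L^2_\nu$-eigenexpansion to a pointwise — indeed uniform and absolute — expansion of $k$ by a positivity-plus-Dini argument, and finally read off that the rescaled eigenfunctions form an orthonormal basis of $\H$ using the reproducing property. As is customary for Mercer's theorem I take $k$ to be continuous; since $\X$ is compact this makes $k$ bounded with $M:=\sup_{x\in\X}k(x,x)<\infty$ and $k\in L^2(\X\times\X,\nu\otimes\nu)$, so $\mathcal{T}_k$ from~\eqref{eq:integral operator} is Hilbert--Schmidt on $L^2_\nu$, hence compact. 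Symmetry of $k$ makes $\mathcal{T}_k$ self-adjoint, and positive definiteness of $k$ gives $\langle \mathcal{T}_k f,f\rangle_{L^2_\nu}\geq 0$ for all $f$ (approximate the double integral by Riemann-type sums and invoke $\sum_{i,j}c_ic_jk(x_i,x_j)\geq 0$). The spectral theorem then yields an orthonormal system $\{\varphi_i\}$ spanning $\overline{\operatorname{ran}\mathcal{T}_k}$ with $\mathcal{T}_k\varphi_i=\lambda_i\varphi_i$, $\lambda_i>0$, $\lambda_i\to 0$; the kernel of $\mathcal{T}_k$ contributes nothing below and is discarded. Each such $\varphi_i=\lambda_i^{-1}\mathcal{T}_k\varphi_i=\lambda_i^{-1}\int k(\cdot,x)\varphi_i(x)\,d\nu(x)$ has a continuous representative by dominated convergence, which I fix.

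The core is the convergence of $\sum_i\lambda_i\varphi_i(x)\varphi_i(x')$ to $k(x,x')$. For $n\in\N$ put $k_n(x,x'):=k(x,x')-\sum_{i=1}^n\lambda_i\varphi_i(x)\varphi_i(x')$; then $\mathcal{T}_{k_n}=\mathcal{T}_k-\sum_{i=1}^n\lambda_i\langle\cdot,\varphi_i\rangle\varphi_i$ is the tail of $\mathcal{T}_k$, still a positive operator. A short lemma — using that $k_n$ is continuous and $\nu$ has support $\X$ — upgrades operator-positivity of $\mathcal{T}_{k_n}$ to pointwise positive semidefiniteness of $k_n$ (test $\mathcal{T}_{k_n}$ against a normalized superposition of bumps localized near any hypothetical offending points). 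Hence $k_n(x,x)\geq 0$, i.e. $s_n(x):=\sum_{i=1}^n\lambda_i\varphi_i(x)^2\leq k(x,x)\leq M$, and integrating gives $\sum_i\lambda_i\leq\int k(x,x)\,d\nu(x)<\infty$. Two consequences follow: (a) by Cauchy--Schwarz, $\bigl|k(x,x')-\sum_{i=1}^n\lambda_i\varphi_i(x)\varphi_i(x')\bigr|\leq\sqrt M\,\bigl(g(x)-s_n(x)\bigr)^{1/2}$ where $g:=\sum_i\lambda_i\varphi_i(\cdot)^2\leq M$, so the series converges uniformly in $x'$ for each fixed $x$ to a continuous function; (b) from $k_n(x,x')^2\leq k_n(x,x)k_n(x',x')$ integrated in $x'$, $\|k_n(x,\cdot)\|_{L^2_\nu}^2\leq M\sum_{i>n}\lambda_i\to 0$ uniformly in $x$, so $\sum_{i=1}^n\lambda_i\varphi_i(x)\varphi_i(\cdot)\to k(x,\cdot)$ in $L^2_\nu$. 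Matching (a) and (b) on a set of full $\nu$-measure and using continuity and full support, the uniform-in-$x'$ limit must equal $k(x,\cdot)$; taking $x'=x$ gives $g(x)=k(x,x)$, which is continuous, so $s_n\nearrow g$ with all functions continuous on a compact set and Dini's theorem forces uniform convergence $s_n\to g$. Feeding this back into (a) yields uniform and absolute convergence of $\sum_i\lambda_i\varphi_i(x)\varphi_i(x')$ to $k(x,x')$ on $\X\times\X$, which is~\eqref{eq:mercer representation}.

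It remains to identify $\H$. Set $e_i:=\sqrt{\lambda_i}\varphi_i$. The bound (a) shows every $\ell^2$-sequence $(a_i)$ produces a uniformly convergent series $\sum_i a_ie_i$ of continuous functions, and the map $(a_i)\mapsto\sum_i a_ie_i$ is injective since $\sum_i a_ie_i\equiv 0$ forces $a_j=\lambda_j^{-1/2}\int\bigl(\sum_i a_ie_i\bigr)\varphi_j\,d\nu=0$; hence $\mathcal{H}_0:=\{\sum_i a_ie_i:(a_i)\in\ell^2\}$, with $\{e_i\}$ declared orthonormal, is a genuine Hilbert space of functions on $\X$. By~\eqref{eq:mercer representation}, $k(x,\cdot)=\sum_i e_i(x)e_i\in\mathcal{H}_0$ with $(e_i(x))_i\in\ell^2$, and for $f=\sum_i a_ie_i$ one reads off $\langle f,k(x,\cdot)\rangle_{\mathcal{H}_0}=\sum_i a_ie_i(x)=f(x)$; moreover $f\perp k(x,\cdot)$ for all $x$ implies $f\equiv 0$ as a function, hence $f=0$ in $\mathcal{H}_0$, so $\operatorname{span}\{k(x,\cdot)\}$ is dense in $\mathcal{H}_0$. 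Thus $\mathcal{H}_0$ satisfies both defining properties of the RKHS of $k$, so $\mathcal{H}_0=\H$ by uniqueness of the RKHS of a positive definite kernel, and $\{\sqrt{\lambda_i}\varphi_i\}=\{e_i\}$ is by construction an orthonormal basis of $\H$.

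I expect the main obstacle to be the convergence step of the second paragraph — the genuinely nontrivial content of Mercer's theorem. Its load-bearing ingredient is the passage from operator-positivity of the continuous remainder kernels $k_n$ to their \emph{pointwise} positive semidefiniteness, which is exactly where the continuity of $k$ and the full-support hypothesis on $\nu$ are used, and which simultaneously supplies the diagonal bound $s_n\leq k(\cdot,\cdot)$ and the summability $\sum_i\lambda_i<\infty$ that drive everything afterward. The remaining pieces — the spectral theorem, regularity of the $\varphi_i$, Cauchy--Schwarz, Dini's theorem, and uniqueness of the RKHS — are standard bookkeeping.
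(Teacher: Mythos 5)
The paper does not prove this statement---it is quoted verbatim from the literature (Mercer 1909, in the formulation of [MFSS16]) and used as a black box---so there is no in-paper proof to compare against. Your argument is the standard and correct proof of Mercer's theorem: spectral decomposition of the compact positive self-adjoint operator $\mathcal{T}_k$, the bump-function argument upgrading operator positivity of the remainder kernels $k_n$ to pointwise positive semidefiniteness (this is where compactness of $\X$, continuity of $k$, and $\operatorname{supp}\nu=\X$ all enter), the diagonal bound and Dini's theorem for uniform absolute convergence, and the series model of the RKHS identified via uniqueness. Two small points worth flagging: you correctly observe that continuity of $k$ must be added as a hypothesis---it is not stated explicitly in the theorem but is genuinely needed and is implicit in the cited formulation; and in step (a) the displayed inequality should read as a bound on the tail $\bigl|\sum_{i=n+1}^{m}\lambda_i\varphi_i(x)\varphi_i(x')\bigr|$ establishing Cauchyness, since the identification of the limit with $k(x,x')$ only comes afterwards via (b)---your surrounding prose makes clear this is what you mean, but the formula as written briefly presupposes the conclusion.
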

The above formulation of Mercer's theorem has been taken from~\cite{MFSS16}. The \emph{Mercer features} $\eta_i := \sqrt{\lambda_i} \ts \varphi_i$ thus fulfill~\eqref{eq:kernel assumption} for their corresponding kernel. 
\hl{The usage of the same symbol $\eta$ as for the linear feature map from Section~\ref{sec:whitney embedding} is no coincidence, as the Mercer features will again serve the purpose to observe certain features of the full system. In what follows, $\eta(x)$ will always refer to the vector (or $\ell^2$ sequence) defined by the Mercer features.} If not stated otherwise, $ \nu $ will be the standard Lebesgue measure.

\begin{example}
\label{ex:kernels}
Examples of commonly used kernels are:
\begin{enumerate}
\item Linear kernel: $k(x,x') = x^\top x'$. One sees immediately that~\eqref{eq:kernel assumption} is fulfilled by choosing $\eta_i(x) = x_i$, $i=1,\ldots,n$ (also spanning the Mercer feature space).
\item Polynomial kernel of degree $ p $: $k(x,x') = (x^\top x' + 1)^p$. It can be shown that the Mercer feature space is spanned by the monomials in $x$ up to degree $p$.
\item Gaussian kernel: 
$
k(x,x') = \exp\left(-\frac{1}{\sigma}\norm{x-x'}_2^2\right),
$
where $ \sigma > 0 $ is called the \emph{bandwidth} of the kernel. Let $p\in\mathbb{N}$ and $\mathbf{p} = (p_1,\dots,p_n)$ with $p_1+\ldots+p_n=p$ be a multi-index.
The Mercer features of $k$ then take the form
$$
\eta_{\mathbf{p}}(x) = e_{p_1}(x_1) \cdots e_{p_n}(x_n),
$$
see \cite{Steinwart2008:SVM}, where
\begin{equation}
    e_{p_i}(x_i) = \sqrt{\frac{2^{p_i}}{\sigma^{2{p_i}}{p_i}!}} \, x_i^{p_i} \, \exp\left(-\tfrac{1}{\sigma^{2}}x_i^2\right). \tag*{\exampleSymbol}
\end{equation}
\end{enumerate}
\end{example}

Let $\mathcal{F}_k$ denote the density embedding based on the Mercer features of the kernel $k$, i.e.,
\begin{equation}
\label{eq:kernel dynamical embeddings}
\big(\mathcal{F}_k(p^t_x)\big)_i := \int \eta_i(x') p^t_x(x')\ts dx',\quad i=0,1,2,\ldots,
\end{equation}
and let $\mathcal{E}_k(x):= \mathcal{F}_k(p^t_x)$.
The amount of information about $p^t_x$ preserved by  the embedding $\mathcal{F}_k$ depends on the choice of the kernel $k$. For the first two kernels in Example~\ref{ex:kernels}, the information preserved has a familiar stochastic interpretation~(see, e.g., \cite{MFSS16,Schoelkopf2015,Sri10}):
\begin{enumerate}
\item Let $k$ be the linear kernel. Then
$$
\left\| \mathcal{F}_k(p^t_{x_1}) - \mathcal{F}_k(p^t_{x_2})\right\|_\hl{2}=0\quad \Longleftrightarrow\quad \int p^t_{x_1}(y) \ts dy = \int p^t_{x_2}(y) \ts dy,
$$
i.e., the means of $p^t_{x_1}$ and $p^t_{x_1}$ coincide.
\item Let $k$ be the polynomial kernel of degree $p>1$. Then
$$
\left\| \mathcal{F}_k(p^t_{x_1}) - \mathcal{F}_k(p^t_{x_2})\right\|_\hl{2}=0\quad \Longleftrightarrow\quad \mathbf{m}_i\big(p^t_{x_1}\big) = \mathbf{m}_i\big(p^t_{x_2}\big), \quad i=1,\dots,p,
$$
i.e., the first $p$ moments $\mathbf{m}_i$ of $p^t_{x_1}$ and $p^t_{x_1}$ coincide. 
\end{enumerate}

\begin{remark}
In practice, comparing the first $p$ moments often is enough to sufficiently distinguish the transition densities that constitute the transition manifold. However, densities that differ only in higher moments cannot be distinguished by $\mathcal{F}_k$, which means that for the above two kernels, $\mathcal{F}_k$ is not injective \hl{on $\M$}. Therefore $\mathcal{F}_k$ does not belong to the prevalent class of maps that is at the heart of the Whitney embedding theorem~\ref{thm:Whitney embedding theorem}. We can therefore not utilize the Whitney embedding theorem to argue that the topology of $\M$ is preserved under~$\mathcal{F}_k$.
Instead, in Section~\ref{sec:kernel mean embedding}, we will use a different argument to show that the embedding is indeed injective for the Gaussian kernel (and others).
\end{remark}

Still, by formally using the Mercer dynamical embedding $\mathcal{E}_k$ in~\eqref{eq:dynamical embedding} (abusing notation if there are countably infinitely many such features), and using the kernel trick, we can now reformulate Algorithm~\ref{algo:WhitneyRC} as a \emph{kernel-based} method that does not require the explicit computation of any feature vector. This is summarized in Algorithm~\ref{algo:KernelRC}.

\begin{algorithm}
\floatname{algorithm}{Algorithm}
\caption{Kernel-based computation of the reaction coordinate.}
\label{algo:KernelRC}
\begin{algorithmic}[1]
\REQUIRE{Kernel $k\colon\X\times\X\rightarrow \R$, intermediate lag time $\tau$.}
%\ENSURE{Reaction coordinate $\xi$, evaluated at chosen points.}
\STATE Choose test points $\X_N=\{x_1,\ldots,x_N\}$ that cover the relevant parts of state space.
\hl{\FOR {$i=1,\ldots,N$}
	{\FOR {$l=1,\ldots,M$}
		\STATE {Simulate trajectory of length $\tau$ with new random seed. Let the end point be denoted by $y_i^{(l)}$.}
		\ENDFOR
		}
\ENDFOR}
\STATE Compute the kernel matrix $K\in\R^{N\times N}$:
\begin{equation*}
    \setlength{\abovedisplayskip}{0pt}
    \setlength{\belowdisplayskip}{0pt}
    K_{ij} = \frac{1}{M^2} \sum_{l_1,l_2=1}^M k(y_i^{(l_1)},y_j^{(l_2)}).
\end{equation*}
\STATE Compute the distance matrix $D\in\R^{N\times N}$:
\begin{equation*}
    \setlength{\abovedisplayskip}{0pt}
    \setlength{\belowdisplayskip}{-2ex}
    D_{ij} = K_{ii} + K_{jj} -2 K_{ij}.
\end{equation*}
\STATE Apply a distance-based manifold learning algorithm to the distance matrix $D$. Denote the resulting parametrization of the underlying $i$-th element by $\widetilde{\gamma}_i\in \R^r$.
\ENSURE An $r$-dimensional reaction coordinate evaluated at the test points:
\begin{equation*}
    \setlength{\abovedisplayskip}{0pt}
    \setlength{\belowdisplayskip}{0pt}
    \xi(x_i) := \widetilde{\gamma}_i,\quad i=1,\dots,N.
\end{equation*}
\end{algorithmic}
\end{algorithm}

\subsection{Condition number of the kernel embedding}
\label{sec:kernel mean embedding}

We will now investigate to what extent the kernel embedding preserves the topology and geometry of the transition manifold.

\subsubsection{Kernel mean embedding}

We derived the kernel-based algorithm by considering the embedding $\mathcal{F}_k$ of the transition manifold into the image space of the Mercer features in order to highlight the similarity to the Whitey embedding based on randomly drawn features. Of course, the Mercer features never had to be computed explicitly.

However, in order to investigate the quality of this embedding procedure it is advantageous to consider a different, yet equivalent embedding map: The transition manifold can be directly embedded into the RKHS by means of the \emph{kernel mean embedding} operator.

\begin{definition}
\label{def:kernel mean embedding}
Let $k$ be a positive definite kernel and $\H$ the associated RKHS. Let $p$ be a probability density over $\mathbb{X}$. Define the \emph{kernel mean embedding} of~$p$ by
\begin{align*}
\mu(p) &:= \int_\mathbb{X}k(x,\cdot)p(x) \ts dx
\intertext{and the \emph{empirical kernel mean embedding} by}
\hat{\mu}(p) &:= \frac{1}{m}\sum_i k(x_i,\cdot)\quad \text{with}\quad \{x_1,\ldots,x_m\}\sim p.
\end{align*}
\end{definition}

Note that $\mu(p)$ and $\hat{\mu}(p)$ are again elements of $\H$ and that for $ \nu $ in \eqref{eq:integral operator} being the Lebesgue measure we obtain $ \mu(p) = \mathcal{T}_k \ts p $.
%(In general, the mean embedding is defined for probability measures. However, we will consider only densities.) 
Further, one sees that
$$
\big\langle \mathcal{F}_k(p^t_{x_1}),\mathcal{F}_k(p^t_{x_2}) \big\rangle = \left\langle \mu(p^t_{x_1}), \mu(p^t_{x_2})\right\rangle_\H,
$$
\hl{where the inner product $\langle\cdot,\cdot\rangle$ refers to the Euclidean inner product or the inner product in $\ell^2(\mathbb{N}_0)$, dependent on whether $\mathcal{F}_k(p)$ is finite or countably infinite.}
Thus, for investigating whether the embedding $\mathcal{F}_k$ preserves distances or inner products between densities, we can equivalently investigate the embedding $\mu$.
This is advantageous as injectivity and isometry properties of the kernel mean embedding are well-studied.

\subsubsection{Injectivity of the kernel mean embedding}

A first important result is that $k$ can be chosen such that $\mu$ is injective. Such kernels are called \emph{characteristic}~\cite{Fuk07}. In \cite{Sri10}, several conditions for characteristic kernels are listed, including the following:

\begin{theorem}[{\cite[Theorem~7]{Sri10}}]
\label{thm:requirement characteristic}
The kernel $k$ is characteristic if for all $f\in L_2,~f\neq 0$ it holds that
\begin{equation}
\label{eq:requirement characteristic}
\int_\X\int_\X k(x,x') f(x) f(x') \ts dx \ts dx' > 0.
\end{equation}
\end{theorem}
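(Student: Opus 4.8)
The plan is to unpack the definition of a characteristic kernel and reduce the claim to a single computation of an RKHS norm. Recall that $k$ is characteristic precisely when the kernel mean embedding $\mu$ from Definition~\ref{def:kernel mean embedding} is injective on the set of probability densities over $\X$; equivalently, for any two distinct densities $p_1,p_2$ one must have $\mu(p_1)\neq\mu(p_2)$. Since the integral defining $\mu$ is linear in its argument, it extends to signed densities by the same formula, and the goal becomes: show $\mu(f)\neq 0$ in $\H$ whenever $f:=p_1-p_2$ is a nonzero element of $L_2$. Note that on the compact set $\X$ we have $L_2\subset L_1$, so any such difference is integrable and $\mu(f)$ is well defined.

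First I would check that $\mu(f)=\int_\X k(x,\cdot)f(x)\ts dx$ may legitimately be read as a Bochner integral with values in $\H$. Since $\X$ is compact and $k$ is continuous, $\sup_{x\in\X}\norm{k(x,\cdot)}_\H=\sup_{x\in\X}\sqrt{k(x,x)}<\infty$, so $\int_\X\norm{k(x,\cdot)}_\H\,\abs{f(x)}\ts dx<\infty$ and the integral converges absolutely in $\H$. This is the one technical point requiring care, and it is exactly where the boundedness of $k$ and the compactness of $\X$ (assumed throughout) enter.

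Next I would compute the squared RKHS norm of $\mu(f)$. Using the Bochner property to pull the inner product inside both integrals and then applying the reproducing identity~\eqref{eq:canonical featuremap}, namely $\langle k(x,\cdot),k(x',\cdot)\rangle_\H=k(x,x')$, one obtains
\[
\norm{\mu(f)}_\H^2=\left\langle \int_\X k(x,\cdot)f(x)\ts dx,\ \int_\X k(x',\cdot)f(x')\ts dx'\right\rangle_\H=\int_\X\int_\X k(x,x')\ts f(x)\ts f(x')\ts dx\ts dx',
\]
the interchange of integration being justified by Fubini together with the absolute convergence established above.

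Finally I would invoke the hypothesis~\eqref{eq:requirement characteristic}: since $f\neq 0$, the right-hand side is strictly positive, hence $\norm{\mu(p_1)-\mu(p_2)}_\H=\norm{\mu(f)}_\H>0$ and therefore $\mu(p_1)\neq\mu(p_2)$. As $p_1,p_2$ were arbitrary distinct densities, $\mu$ is injective, i.e., $k$ is characteristic. The main obstacle is not the algebra but the functional-analytic bookkeeping --- ensuring that the integral defining $\mu(f)$ and the double integral in~\eqref{eq:requirement characteristic} are meaningful and that the Bochner-integral manipulation combined with Fubini is valid --- all of which reduce to the boundedness of $k$ and the compactness of $\X$.
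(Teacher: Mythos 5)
The paper does not actually prove this statement --- it is imported verbatim as a citation of \cite[Theorem~7]{Sri10} --- so there is no internal proof to compare against. Judged on its own, your argument is the standard one and is essentially sound: reduce injectivity of $\mu$ to $\mu(f)\neq 0$ for $f=p_1-p_2$ by linearity, and then observe that $\norm{\mu(f)}_\H^2=\int_\X\int_\X k(x,x')f(x)f(x')\ts dx\ts dx'$, which the hypothesis makes strictly positive. That key identity is in fact exactly the computation the paper carries out elsewhere, in the proof of Lemma~\ref{lem:locality preserving}, using the reproducing property~\eqref{eq:canonical featuremap} and the Mercer expansion; your Bochner-integral justification of pulling the inner product through the integral is the right technical underpinning and is where boundedness of $k$ and compactness of $\X$ enter, as you say.

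There is one genuine mismatch you should flag. The hypothesis~\eqref{eq:requirement characteristic} quantifies over $f\in L_2$, but ``characteristic'' in the sense of \cite{Fuk07,Sri10} means injectivity of the mean embedding on \emph{all} Borel probability measures (or at least all $L^1$ densities), and a difference $p_1-p_2$ of two probability densities lies in $L^1$ but need not lie in $L^2$, even on compact $\X$. So your argument, as written, only establishes injectivity of $\mu$ restricted to $L^2$ densities; to get the full statement of \cite[Theorem~7]{Sri10} one needs the integral strict positive definiteness to hold for all finite nonzero signed Borel measures (which is how that theorem is actually phrased), or else an additional approximation step. For the purposes of this paper the restricted conclusion suffices, since the relevant densities $p^\tau_x$ are shown in Lemma~\ref{lem:L1 estimation} to lie in $L^2_{1/\rho}\subset L^2$, but you should state explicitly that this is the class on which injectivity is being proved.
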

Condition~\eqref{eq:requirement characteristic} is known as the \emph{Mercer condition}, which is, for example, fulfilled by the Gaussian kernel from Example~\ref{ex:kernels}. The Mercer features of such a kernel are particularly rich.

\begin{theorem}
\label{thm:Mercer L2 basis}
Assume that the kernel satisfies the Mercer condition~\eqref{eq:requirement characteristic}. Then the eigenfunctions $ \{ \psi_i \} $ of $ \mathcal{T}_k $ form an orthonormal basis of $ L^2(\nu) $.
\end{theorem}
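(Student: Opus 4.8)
The plan is to recognize $\mathcal{T}_k$ as a compact, self-adjoint, positive operator on $L^2(\nu)$, apply the spectral theorem for such operators, and then use the Mercer condition~\eqref{eq:requirement characteristic} solely to rule out a nontrivial null space.

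First I would record the structural properties of the integral operator~\eqref{eq:integral operator}. Since $\X$ is compact and $k$ is (as throughout our setting) continuous, $k$ is bounded, hence $k\in L^2(\nu\times\nu)$ because $\nu$ is finite; this makes $\mathcal{T}_k$ a Hilbert--Schmidt operator on $L^2(\nu)$, in particular compact. Symmetry of $k$ gives $\mathcal{T}_k=\mathcal{T}_k^*$, and positive definiteness of $k$ (in the kernel sense) yields the integral version $\langle \mathcal{T}_k f,f\rangle_{L^2(\nu)}\geq 0$ for all $f$. By the spectral theorem for compact self-adjoint operators, $L^2(\nu)$ decomposes orthogonally as $\ker\mathcal{T}_k \oplus \overline{\operatorname{span}}\{\psi_i : \lambda_i\neq 0\}$, where the $\psi_i$ are the eigenfunctions associated to the nonzero eigenvalues $\lambda_i$, chosen orthonormal within each (finite-dimensional) eigenspace and mutually orthogonal across eigenspaces. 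Consequently, $\{\psi_i\}$ is an orthonormal basis of $L^2(\nu)$ if and only if $\ker\mathcal{T}_k=\{0\}$, i.e.\ $\mathcal{T}_k$ is injective.

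Next I would show that the Mercer condition forces this injectivity. For $f\in L^2(\nu)$, with $\nu$ the Lebesgue measure as assumed by default, one has
$$
\langle \mathcal{T}_k f,f\rangle_{L^2(\nu)} = \int_\X\int_\X k(x,x')f(x)f(x')\ts dx\ts dx'.
$$
If $\mathcal{T}_k f = 0$, the left-hand side vanishes, so~\eqref{eq:requirement characteristic} forces $f=0$; hence $\ker\mathcal{T}_k=\{0\}$, and by the previous paragraph the eigenfunctions of $\mathcal{T}_k$ form an orthonormal basis of $L^2(\nu)$.

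The only genuine subtlety, and the step I would be most careful about, is the justification that $\mathcal{T}_k$ is compact and self-adjoint: this rests on the mild (and in our applications always satisfied) regularity of $k$ and finiteness of $\nu$; once these are in place the remainder is an immediate combination of the spectral theorem with the definition of the Mercer condition. A secondary point worth one sentence is that for a general finite Borel measure $\nu$ with support $\X$ (rather than Lebesgue), condition~\eqref{eq:requirement characteristic} should be read with $dx,dx'$ replaced by $d\nu(x),d\nu(x')$, after which the argument is verbatim the same.
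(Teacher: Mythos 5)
Your proof is correct. The paper does not actually supply an argument for Theorem~\ref{thm:Mercer L2 basis} --- it defers to the cited references --- and what you have written is precisely the standard argument found there: $\mathcal{T}_k$ is Hilbert--Schmidt (hence compact) and self-adjoint because $k$ is bounded, symmetric, and $\nu$ is finite, so the spectral theorem gives $L^2(\nu)=\ker\mathcal{T}_k\oplus\overline{\operatorname{span}}\{\psi_i:\lambda_i\neq 0\}$, and the Mercer condition~\eqref{eq:requirement characteristic}, being exactly the statement $\langle\mathcal{T}_k f,f\rangle_{L^2(\nu)}>0$ for $f\neq 0$, kills the kernel. Your closing remarks --- that compactness needs only $k\in L^2(\nu\times\nu)$ rather than continuity, and that for general $\nu$ the condition must be read with $d\nu$ in place of $dx$ --- are both accurate and worth keeping.
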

For more details, see, e.g., \cite{Scholkopf2001,Steinwart2008:SVM}. It is easy to see that for kernels fulfilling~\eqref{eq:requirement characteristic}, $\mu$ as a map from $L^2$ to $\H$ is Lipschitz continuous:
\begin{lemma}
\label{lem:locality preserving}
Let $k$ be a characteristic kernel with Mercer eigenvalues $\lambda_i$, $i\in\N_0$.
Then $\mu\colon L^2\rightarrow \H$ is Lipschitz continuous with constant
\begin{equation}
\label{eq: requirement continuous}
c := \sqrt{\lambda_0}.
\end{equation}
\end{lemma}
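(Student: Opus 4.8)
The plan is to exploit the identification $\mu(p) = \mathcal{T}_k\ts p$ (valid here since $\nu$ is the Lebesgue measure) together with the Mercer representation, and reduce the claim to a simple bound on the eigenvalues of $\mathcal{T}_k$. Since $\mu$ acts linearly on densities, for two densities $p,q$ we have $\mu(p) - \mu(q) = \mathcal{T}_k(p-q)$, so it suffices to show that $\mathcal{T}_k$ maps $L^2$ into $\H$ with $\|\mathcal{T}_k f\|_\H \le \sqrt{\lambda_0}\,\|f\|_{L^2}$ for all $f\in L^2$.

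First I would invoke Theorem~\ref{thm:Mercer L2 basis}: because $k$ is characteristic (hence satisfies the Mercer condition~\eqref{eq:requirement characteristic}), the eigenfunctions $\{\varphi_i\}$ of $\mathcal{T}_k$ form an orthonormal basis of $L^2$, while by Mercer's theorem (Theorem~\ref{thm:Mercer}) the rescaled functions $\{\sqrt{\lambda_i}\,\varphi_i\}$ form an orthonormal basis of $\H$. Expanding an arbitrary $f\in L^2$ as $f = \sum_i a_i\varphi_i$ with $\sum_i a_i^2 = \|f\|_{L^2}^2$, one computes $\mathcal{T}_k f = \sum_i \lambda_i a_i \varphi_i = \sum_i (\sqrt{\lambda_i}\,a_i)\,(\sqrt{\lambda_i}\,\varphi_i)$, so that
$$
\|\mathcal{T}_k f\|_\H^2 = \sum_i \lambda_i\, a_i^2 \le \Big(\sup_i \lambda_i\Big)\sum_i a_i^2 = \lambda_0\,\|f\|_{L^2}^2 ,
$$
using that the eigenvalues are sorted in decreasing order so that $\lambda_0 = \sup_i\lambda_i$. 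In particular the series defining $\mathcal{T}_k f$ converges in $\H$, confirming $\mathcal{T}_k f\in\H$. Taking square roots and substituting $f = p - q$ yields the Lipschitz bound with constant $c = \sqrt{\lambda_0}$.

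The only point requiring a word of justification is that $\lambda_0 < \infty$, i.e.\ that the spectrum of $\mathcal{T}_k$ is bounded: this follows since $k$ is continuous on the compact space $\X$, hence bounded, so $\mathcal{T}_k$ is a Hilbert--Schmidt (in particular bounded, indeed compact) operator on $L^2$ and its eigenvalues are square-summable. I expect this to be the main---and only mild---obstacle; the rest is the routine orthonormal-basis computation above. One should also note that the convention fixing $\lambda_0$ as the \emph{largest} eigenvalue is what makes the constant sharp, since equality is attained for $f = \varphi_0$.
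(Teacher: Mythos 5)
Your proof is correct and follows essentially the same route as the paper: both reduce to showing $\|\mu(f)\|_\H \le \sqrt{\lambda_0}\,\|f\|_{L^2}$ by linearity and both arrive at the identity $\|\mu(f)\|_\H^2 = \sum_i \lambda_i \langle f,\varphi_i\rangle_{L^2}^2$, the paper via the reproducing property and the Mercer series for $k$, you via the eigenfunction expansion of $f$ and the $\H$-orthonormality of $\{\sqrt{\lambda_i}\,\varphi_i\}$. The final estimate $\sum_i \lambda_i a_i^2 \le \lambda_0 \sum_i a_i^2$ is identical, so the two arguments differ only in bookkeeping.
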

\begin{proof}
As $\mu$ is linear, it suffices to show that $\|\mu(f)\|_\H\leq c \|f\|_2$ for all $f\in L_2$. We obtain
\begin{align*}
\|\mu(f)\|_\H^2 &= \left\langle\mu(f),\mu(f)\right\rangle_\H \\
&= \left\langle \int_\X f(x) k(x,\cdot) \ts dx, \int_\X f(x) k(x,\cdot) \ts dx \right\rangle_\H \\
&= \int_\X \int_\X f(x)f(y) k(x,y) \ts dx \ts dy,
\end{align*}
where~\eqref{eq:canonical featuremap} was used in the last line. By expanding $k$ into its Mercer features via~\eqref{eq:mercer representation}, this becomes
\begin{align*}
\|\mu(f)\|_\H^2 &= \int_\X\int_\X f(x) f(y) \left( \sum_{i\in\N} \lambda_i \varphi_i(x) \varphi_i(y)\right) \! dx \ts dy \\
&= \sum_{i\in\N} \lambda_i \langle f,\varphi_i\rangle_{L^2}^2.
\end{align*}
By Theorem~\ref{thm:Mercer L2 basis}, the $\varphi_i$ form an orthonormal basis of $L^2$, and thus
\begin{align*}
\|\mu(f)\|_\H^2 &\leq \lambda_0 \ts \|f\|_{L^2}^2. \qedhere
\end{align*}
\end{proof}

Thus, if the kernel is characteristic, the structure of the TM and the fuzzy TM are qualitatively preserved under the embedding.

\begin{corollary}
\label{cor:embedded core TM}
Let $k$ be a characteristic kernel and let $X_t$ be $(\varepsilon,r)$-reducible.
Then $\mu(\M)$ is an $r$-dimensional manifold in $\H$, and for all $x\in\X$ it holds that
$$
\inf_{g\in\mu(\M)} \|g-\mu(p^t_x)\|_\H \leq \sqrt{\lambda_0} \ts \|\sqrt{\rho}\|_\infty \ts \varepsilon.
$$
\end{corollary}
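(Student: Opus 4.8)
The plan is to dispatch the two assertions separately, since they rely on different properties of $\mu$. For the quantitative bound I would imitate the proof of Proposition~\ref{prop:embedded fuzzy manifold}. Fix $x\in\X$. By $(\varepsilon,r)$-reducibility together with $\M\subset\widetilde{\M}$, there is an $x^*\in\X$ with $p^t_{x^*}\in\M$ and $\|p^t_x-p^t_{x^*}\|_{L^2_{1/\rho}}\le\varepsilon$. Using linearity of $\mu$,
\[
\inf_{g\in\mu(\M)}\|g-\mu(p^t_x)\|_\H \;\le\; \|\mu(p^t_{x^*})-\mu(p^t_x)\|_\H \;=\; \|\mu(p^t_{x^*}-p^t_x)\|_\H .
\]
Now I would combine Lemma~\ref{lem:locality preserving}, which gives $\|\mu(f)\|_\H\le\sqrt{\lambda_0}\,\|f\|_{L^2}$, with the elementary norm comparison $\|f\|_{L^2}\le\|\sqrt{\rho}\|_\infty\,\|f\|_{L^2_{1/\rho}}$. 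The latter follows by writing $\int f^2\,dx=\int f^2\rho\cdot\rho^{-1}\,dx\le\|\rho\|_\infty\int f^2\rho^{-1}\,dx$ and taking square roots (note $\|\rho\|_\infty<\infty$ since $\rho$ is a bounded density on the compact set $\X$). Chaining the three estimates yields $\|\mu(p^t_{x^*}-p^t_x)\|_\H\le\sqrt{\lambda_0}\,\|\sqrt{\rho}\|_\infty\,\varepsilon$, which is exactly the claimed bound, and this incidentally also shows $\widetilde{\M}$ clusters $\big(\sqrt{\lambda_0}\|\sqrt{\rho}\|_\infty\varepsilon\big)$-closely around $\mu(\M)$.

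For the manifold assertion, the strategy is to show that $\mu$ restricted to $\M$ is a homeomorphism onto its image; then $\mu(\M)$ inherits the $r$-dimensional manifold structure of $\M$, exactly as in the Whitney-embedding discussion following Theorem~\ref{thm:Whitney embedding theorem}. Injectivity of $\mu|_\M$ is where the hypothesis that $k$ is characteristic enters: every element of $\M\subset\widetilde{\M}$ is a transition density $p^t_x$, hence a probability density, and a characteristic kernel separates probability densities. Continuity of $\mu|_\M$ follows from Lemma~\ref{lem:locality preserving} together with the continuous embedding $L^2_{1/\rho}\hookrightarrow L^2$ established in the first part. To upgrade the continuous bijection $\mu|_\M\colon\M\to\mu(\M)$ to a homeomorphism I would invoke compactness of $\M$: under the standing regularity assumptions the map $x\mapsto p^t_x$ is continuous into $L^2_{1/\rho}$, so $\widetilde{\M}$ is the continuous image of the compact set $\X$ and hence compact, and $\M$ is compact as well; a continuous injection from a compact space into a Hausdorff space is automatically a homeomorphism onto its image.

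The step I expect to be the main obstacle is precisely this last point — arguing that $\mu|_\M$ is a topological \emph{embedding} and not merely a continuous bijection. For characteristic kernels $\mu$ is injective but in general not bi-Lipschitz on $L^2$ (already for the Gaussian kernel), so one cannot simply invert a distortion estimate to obtain continuity of $(\mu|_\M)^{-1}$; compactness of $\M$ (or, alternatively, a properness argument) is the natural remedy, and making it rigorous hinges on continuity of the transition-density map $x\mapsto p^t_x$, which should be recorded among the standing assumptions. Everything else — linearity of $\mu$, the norm comparison between $L^2$ and $L^2_{1/\rho}$, and the infimum estimate — is routine and mirrors Proposition~\ref{prop:embedded fuzzy manifold}.
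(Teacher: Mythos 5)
Your proof of the quantitative bound is exactly the paper's argument: choose $p^t_{x^*}\in\M$ within $\varepsilon$ of $p^t_x$ in $L^2_{1/\rho}$, use linearity of $\mu$, the Lipschitz bound of Lemma~\ref{lem:locality preserving}, and the comparison $\|\cdot\|_{L^2}\le\|\sqrt{\rho}\|_\infty\|\cdot\|_{L^2_{1/\rho}}$, mirroring Proposition~\ref{prop:embedded fuzzy manifold}. For the manifold assertion the paper simply states that continuity and injectivity of $\mu$ imply $\mu(\M)$ is again an $r$-dimensional manifold; your additional step — upgrading the continuous injection to a homeomorphism via compactness of $\M$ (resting on continuity of $x\mapsto p^t_x$) — is a refinement that fills in a point the paper leaves implicit, and is correct.
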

\begin{proof}
By Lemma~\ref{lem:locality preserving}, the map $\mu \colon L^2\rightarrow \H$ is continuous (and furthermore injective), and thus $\mu(\M)$ is again an $r$-dimensional manifold. For $x\in\X$, consider now any $f\in L^2_{1/\rho}$ with $\|f-p^t_x\|_{L^2_{1/\rho}}\leq \varepsilon$. For $g:= \mu(f)$, we then get
\begin{align*}
\big\|g - \mu(p^t_x)\big\|_\H &= \big\|\mu\big(f - p^t_x\big)\big\|_\H 
\shortintertext{which by Lemma \ref{lem:locality preserving} is}
%\overset{\text{Lem.}~\ref{lem:locality preserving}}
%&\overset{\mathclap{\text{Lem.}~\ref{lem:locality preserving}}}{\leq} &\leq \sqrt{\lambda_0} \ts \|f-p^t_x\|_{L^2} \\
&\leq \sqrt{\lambda_0} \ts \|\sqrt{\rho}\|_\infty \ts \|f-p^t_x\|_{L^2_{1/\rho}} \\
&\leq \sqrt{\lambda_0} \ts \|\sqrt{\rho}\|_\infty \ts \varepsilon. \qedhere
\end{align*}
\end{proof}

\begin{remark}
This result should be seen as an analogue to Proposition~\ref{prop:embedded fuzzy manifold} for the Whitney-based TM embedding.
In short, for characteristic kernels, the injectivity and continuity of $\mu$ guarantee that the image of $\M$ under $\mu$ is again an $r$-dimensional manifold in $\H$, and Corollary~\ref{cor:embedded core TM} guarantees that the embedded fuzzy transition manifold $\mu(\widetilde{\M})$ still clusters closely around $\mu(\M)$ (if $\sqrt{\lambda_0}$ and $\|\sqrt{\rho}\|_\infty$ in Corollary~\ref{cor:embedded core TM} are small).
This again guarantees a minimal degree of well-posedness of the problem.
\end{remark}

\subsubsection{Distortion under the kernel mean embedding}
\label{sec:distortion}

Unlike the Whitney embedding, the kernel embedding now allows us to derive conditions under which the distortion of $\M$ is bounded. We have to show that the $L^2_{1/\rho}$-distance between points on $\M$ is not overly decreased or increased by the kernel mean embedding. To formalize this, we consider measures for the manifold's internal distortion, following the notions of metric embedding theory~\cite{Abraham2011}.
\hl{
We call the embedding \emph{well-conditioned} if both the
\begin{equation}
\label{eq:compression factor}
\text{contraction: } \sup_{\substack{p,q\in\M \\ q\neq p}} \frac{\|p-q\|_{L^2_{1/\rho}}}{\|\mu(p)-\mu(q)\|_\H}
\quad
\text{and the expansion: }
 \sup_{\substack{p,q\in\M \\ q\neq p}} \frac{\|\mu(p)-\mu(q)\|_\H}{\|p-q\|_{L^2_{1/\rho}}}
\end{equation}
are small (close to one). Here, $\mu$ denotes the embedding corresponding to a characteristic kernel.

% \begin{proposition}
% Assume $d_1 > 0$ is the largest and $ d_2 > 0$ the smallest bound such that
% \begin{equation}
% \label{eq:condition inequalities}
% d_1 \| p-q\|_{L^2_{1/\rho}} \leq \|\mu(p)-\mu(q)\|_\H \leq d_2 \|p-q\|_{L^2_{1/\rho}}
% \end{equation}
% holds for all $p,q\in\M$. Then the distortion of the kernel embedding is $\mathscr{D}(\mu)=\frac{d_2}{d_1}$.
% \end{proposition}
% \begin{proof}
% Due to the first inequality in~\eqref{eq:condition inequalities},
% $$
% \frac{\|p-q\|_{L^2_{1/\rho}}}{\|\mu(p)-\mu(q)\|_\H} \leq d_1^{-1} \quad \text{for all }p,q\in\M,
% $$
% and thus $\mathscr{C}(\mu) = d_1^{-1}$. Due to the second inequality in~\eqref{eq:condition inequalities},
% $$
% \frac{\|\mu(p)-\mu(q)\|_\H}{\|p-q\|_{L^2_{1/\rho}}} \leq d_2 \quad \text{for all }p,q\in\M,
% $$
% and thus $\mathscr{E}(\mu) = d_2$. Combined, we obtain $\mathscr{D}(\mu) = \frac{d_2}{d_1}$.
% \end{proof}

Due to the Lipschitz continuity of $\mu$ (see Lemma~\ref{lem:locality preserving}) and $\|\cdot \|_{L^2} \leq \|\sqrt{\rho}\|_\infty \ts \|\cdot\|_{L^2_{1/\rho}}$, we have 
\begin{align}
\frac{\|\mu(p) - \mu(q)\|_\H}{\|p-q\|_{L^2_{1/\rho}}} \leq \sqrt{\lambda_0}\ts \|\sqrt{\rho}\|_\infty,
\label{eq:expansion bound}
\end{align}
thus bounding the expansion.
}

\hl{
\paragraph{Contraction bound: regularity requirement.}

Unfortunately, it is not possible even for characteristic kernels to derive a bound for the contraction that holds uniformly for all $p,q\in L^2_{1/\rho}$, as the following proposition shows. Nevertheless, we will be able to give reasonable bounds under some regularity- and dynamic assumptions, \eqref{eq:assumption Mercer coefficients} and~\eqref{eq:transition_density_decomposition}, respectively.

\begin{proposition}[Unbounded inverse embedding]
\label{prop:small distance}
Assume the kernel embedding operator $\mu$ has absolutely bounded orthonormal eigenfunctions $\varphi_i$ with corresponding nonnegative eigenvalues $\lambda_i$ (arranged in nonincreasing order). Assume $\lim_{i\rightarrow \infty} \lambda_i = 0 $. Then there exist functions $p, q\in L^2_{1/\rho}$ such that 
$$
\frac{\|p-q\|_{L^2_{1/\rho}}}{\|\mu(p)-\mu(q)\|_\H} > \frac{1}{\varepsilon}
$$
for any arbitrarily small $\varepsilon>0$.
\end{proposition}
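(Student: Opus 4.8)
The plan is to exhibit, for any target ratio $1/\varepsilon$, a concrete pair $p,q \in L^2_{1/\rho}$ whose $L^2_{1/\rho}$-distance is large while the $\H$-distance of their embeddings is small. The natural candidates are built from the Mercer eigenfunctions themselves: since $\mu$ acts diagonally on the $\varphi_i$ with $\mu(\varphi_i) = \lambda_i \varphi_i$ (using $\mu = \mathcal{T}_k$ for the Lebesgue measure and the eigenrelation $\mathcal{T}_k \varphi_i = \lambda_i \varphi_i$), a single high-index eigenfunction is contracted by exactly the factor $\lambda_i$. Concretely, first I would set $q = 0$ and $p = c_i \varphi_i$ for a suitable scalar $c_i$ and a suitably large index $i$. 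Then $\|\mu(p) - \mu(q)\|_\H = \|\mu(c_i\varphi_i)\|_\H = c_i \lambda_i \|\varphi_i\|_\H = c_i \sqrt{\lambda_i}$, because $\{\sqrt{\lambda_i}\varphi_i\}$ is an orthonormal basis of $\H$ by Mercer's theorem (Theorem~\ref{thm:Mercer}), so $\|\varphi_i\|_\H = 1/\sqrt{\lambda_i}$ and hence $\|\mu(\varphi_i)\|_\H = \|\lambda_i \varphi_i\|_\H = \sqrt{\lambda_i}$.

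Next I would compute the numerator. The subtlety is that we need $\|p\|_{L^2_{1/\rho}}$, not $\|p\|_{L^2}$. Since $\rho$ is a density on a compact space and $\varphi_i$ is absolutely bounded, and since $1/\rho \ge 1/\|\rho\|_\infty$ pointwise, we have $\|\varphi_i\|_{L^2_{1/\rho}} \ge \|\rho\|_\infty^{-1/2}\|\varphi_i\|_{L^2} = \|\rho\|_\infty^{-1/2}$ because the $\varphi_i$ are orthonormal in $L^2$. (If $\rho$ is bounded below away from zero the reverse inequality also gives a two-sided control, but only the lower bound is needed here.) Therefore
$$
\frac{\|p-q\|_{L^2_{1/\rho}}}{\|\mu(p)-\mu(q)\|_\H} = \frac{c_i\,\|\varphi_i\|_{L^2_{1/\rho}}}{c_i\sqrt{\lambda_i}} \ge \frac{1}{\sqrt{\lambda_i}\,\|\rho\|_\infty^{1/2}},
$$
and the scalar $c_i > 0$ cancels, so we may as well take $c_i = 1$. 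Since $\lambda_i \to 0$ by hypothesis, given any $\varepsilon > 0$ we can choose $i$ large enough that $\sqrt{\lambda_i}\,\|\rho\|_\infty^{1/2} < \varepsilon$, which forces the ratio to exceed $1/\varepsilon$. One should also note that both $p = \varphi_i$ and $q = 0$ lie in $L^2_{1/\rho}$ (indeed $\varphi_i$ is bounded and $\X$ compact, so $\varphi_i \in L^2_{1/\rho}$ provided $1/\rho$ is integrable, which holds since $\int 1/\rho\,\rho\,dx$... more simply, $\varphi_i$ bounded and $1/\rho$ finite a.e. with $\X$ compact suffices once one assumes, as elsewhere in the paper, that $\rho$ is bounded below). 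If one prefers $p,q \ne 0$ or wants them to resemble genuine densities, one can instead take $p = \rho + t\varphi_i$ and $q = \rho - t\varphi_i$ for small $t > 0$; the difference $p - q = 2t\varphi_i$ is unchanged up to scaling and the same computation goes through, with $t$ chosen small enough to keep $p,q$ nonnegative.

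The main obstacle is essentially bookkeeping rather than a deep difficulty: one must be careful that the contraction ratio is stated with the weighted norm $L^2_{1/\rho}$ in the numerator but the embedding $\mu$ is most naturally analyzed on unweighted $L^2$, so the comparison constant $\|\rho\|_\infty$ (or $\|\sqrt{\rho}\|_\infty$) must be tracked and one must ensure it does not depend on $i$ — which it does not, since it is a property of the fixed invariant density. A secondary point to verify is that the absolute boundedness of the $\varphi_i$ assumed in the proposition, together with compactness of $\X$, indeed places $\varphi_i$ in $L^2_{1/\rho}$; this uses the standing assumption (implicit in the paper, e.g.\ in Lemma~\ref{lem:L1 estimation}) that $\rho$ is bounded away from zero on the dynamically relevant region, or can be circumvented by the $\rho \pm t\varphi_i$ construction above, whose difference stays supported where $\rho > 0$.
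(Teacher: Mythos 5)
Your proof is correct and uses essentially the same mechanism as the paper's: a difference concentrated on high-index Mercer eigenfunctions is contracted by the factor $\sqrt{\lambda_i}$ in $\H$ relative to $L^2$ (via the orthonormality of $\{\sqrt{\lambda_i}\,\varphi_i\}$ in $\H$), and the fixed constant $\|\sqrt{\rho}\|_\infty$ converts between $L^2$ and $L^2_{1/\rho}$. The paper's version differs only cosmetically --- it takes an arbitrary $p$ and lets $q$ be its truncation onto the first $i_\text{max}$ eigenfunctions, so that $p-q$ is a tail sum rather than your single mode $\varphi_i$ --- but the computation, the choice of index via $\lambda_i \to 0$, and the implicit regularity assumptions (positivity of the relevant $\lambda_i$, and that bounded functions lie in $L^2_{1/\rho}$) are the same in both arguments.
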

}
\begin{proof}
See Appendix~\ref{sec:proofs}.
\end{proof}
The assumptions of Proposition~\ref{prop:small distance} are fulfilled for example for the Gaussian kernel. A similar, but non-quantitative result has been derived in~\cite[Theorem~19]{Sri10}. The idea behind its proof and the proof of Proposition~\ref{prop:small distance} is that, if $p$ and $q$ vary only in higher eigenfunctions $\varphi_i$ of the embedding operator $\mu$ (see also Theorem~\ref{thm:Mercer}), the $\H$-distance can become arbitrarily small.
If, however, we can reasonably restrict our considerations to the subclass of functions whose variation in the higher $\varphi_i$ is small compared to the variation in the lower $\varphi_i$, a favorable bound can be derived. Let the expansion of $h=p-q$ be given by
$$
h = \sum_{i=0}^\infty \tilde{h}_i \varphi_i
$$
with the sequence $(\tilde{h}_0,\tilde{h}_1,\ldots)\in \ell^2$.
Now, for any $i_\text{max}\in\N$ such that there exists an index $i\leq i_\text{max}$ with $\tilde{h}_i\neq 0$, define the factor
\begin{equation}
\label{eq:modefactor}
c(h,i_\text{max}) := 1+\frac{\sum_{i=i_\text{max}+1}^\infty \tilde{h}_i^2}{\sum_{i=0}^{i_\text{max}} \tilde{h}_i^2}.
\end{equation}
This factor bounds the contribution of the higher Mercer eigenfunctions to $h$ by the contribution of the lower ones, hence it is a \emph{regularity bound}:
$$
\sum_{i=0}^\infty \tilde{h}_i^2 = c(h,i_\text{max}) \cdot \sum_{i=0}^{i_\text{max}} \tilde{h}_i^2.
$$

Thus, for an individual $h$, we can bound the distortion of the $L^2$-norm under $\mu$ with the help of $c(h,i_\text{max})$.

\begin{lemma}
\label{lem:stretchingfactor}
Let $h\in L^2$, $i_\text{max}\in\mathbb{N}$, and $c(h,i_\text{max})$ be defined as in \eqref{eq:modefactor}. Then
$$
\|\mu(h)\|_\H \geq \sqrt{\frac{\lambda_{i_\text{max}}}{c(h,i_\text{max})}}\|h\|_{L^2}.
$$
\end{lemma}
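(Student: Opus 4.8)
The plan is to reuse the Mercer expansion of $\|\mu(h)\|_\H^2$ that was already established in the proof of Lemma~\ref{lem:locality preserving}, and then exploit the monotonicity of the eigenvalues together with the very definition of $c(h,i_\text{max})$. Concretely, writing $h=\sum_{i=0}^\infty \tilde h_i\varphi_i$ with $\tilde h_i=\langle h,\varphi_i\rangle_{L^2}$, the computation in Lemma~\ref{lem:locality preserving} gives
$$
\|\mu(h)\|_\H^2 = \sum_{i=0}^\infty \lambda_i\,\tilde h_i^2 .
$$
Here one uses that the kernel is characteristic, so that by Theorem~\ref{thm:Mercer L2 basis} the $\varphi_i$ form an orthonormal basis of $L^2(\nu)$ and Parseval's identity yields $\sum_{i=0}^\infty \tilde h_i^2 = \|h\|_{L^2}^2$.

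Next I would discard the tail of this nonnegative series and use that the $\lambda_i$ are arranged in nonincreasing order, so $\lambda_i\geq\lambda_{i_\text{max}}$ for every $i\leq i_\text{max}$:
$$
\|\mu(h)\|_\H^2 \;\geq\; \sum_{i=0}^{i_\text{max}} \lambda_i\,\tilde h_i^2 \;\geq\; \lambda_{i_\text{max}} \sum_{i=0}^{i_\text{max}} \tilde h_i^2 .
$$
Finally I would invoke the defining relation $\sum_{i=0}^\infty \tilde h_i^2 = c(h,i_\text{max})\cdot\sum_{i=0}^{i_\text{max}}\tilde h_i^2$ (equation~\eqref{eq:modefactor}) to rewrite $\sum_{i=0}^{i_\text{max}}\tilde h_i^2 = \|h\|_{L^2}^2/c(h,i_\text{max})$, obtaining
$$
\|\mu(h)\|_\H^2 \;\geq\; \frac{\lambda_{i_\text{max}}}{c(h,i_\text{max})}\,\|h\|_{L^2}^2 ,
$$
and taking square roots concludes the proof.

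I do not anticipate a genuine obstacle here: the argument is a direct truncation-plus-monotonicity estimate on the Mercer series. The only points that need a word of care are (i) the hypothesis, inherited from the running assumption in this subsection, that $k$ is characteristic, which is what makes $\{\varphi_i\}$ an $L^2$-orthonormal basis and hence legitimizes the Parseval step; and (ii) the nondegeneracy condition built into the definition of $c(h,i_\text{max})$ — namely that some $\tilde h_i\neq 0$ for $i\leq i_\text{max}$ — which guarantees $\sum_{i=0}^{i_\text{max}}\tilde h_i^2>0$ so that $c(h,i_\text{max})$ is well-defined and finite. Both are already in force when the lemma is applied.
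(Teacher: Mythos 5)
Your argument is correct and matches the paper's proof: both rest on the identity $\|\mu(h)\|_\H^2=\sum_i\lambda_i\tilde h_i^2$ (the paper obtains it from $\{\sqrt{\lambda_i}\,\varphi_i\}$ being an orthonormal basis of $\H$, you from the computation in Lemma~\ref{lem:locality preserving} — same content), followed by truncation, monotonicity of the $\lambda_i$, and the defining relation for $c(h,i_\text{max})$. Your added remarks on the characteristic-kernel hypothesis and the nondegeneracy of the denominator are sensible but do not change the route.
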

\begin{proof}
See Appendix~\ref{sec:proofs}.
\end{proof}

\hl{
We from now on make the assumption that for every index $i_\text{max}$ there exists a constant $c^*_{i_\text{max}}>0$ such that
\begin{equation}
\label{eq:assumption Mercer coefficients}
c(p^\tau_{x_1}-p^\tau_{x_2},i_\text{max}) \leq c^*_{i_\text{max}}
\end{equation}
for all $p^\tau_{x_1},p^\tau_{x_2}\in \M$. 
The existence and form of this constant strongly depends on the shape of the Mercer eigenfunctions, hence the kernel. However, we motivate the existence of such a global constant by the observation that higher Mercer eigenfunctions typically consist of high Fourier modes, and that these modes decay quickly under the dynamics. Therefore, high Mercer eigenfunctions should have a negligible share of the $p^\tau_{x}$ and the differences $p^\tau_{x_1}-p^\tau_{x_2}$.
For such $p^\tau_{x_1},p^\tau_{x_2}$, we thus have
\begin{equation}
\label{eq:H vs L2}
\|\mu(p^\tau_{x_1}) - \mu(p^\tau_{x_2}) \|_\H \geq \sqrt{\frac{\lambda_{i_\text{max}}}{c^*_{i_\text{max}}}}\| p^\tau_{x_1}-p^\tau_{x_2}\|_{L^2}.
\end{equation}

\paragraph{Contraction bound: dynamical requirements.}

Note that \eqref{eq:H vs L2} is only an intermediate step for deriving a contraction bound, as the relevant distance measure in Definition~\ref{def:good reaction coordinate} is the $L^2_{1/\rho}$-norm, for reasons detailed in Section~\ref{sec:transfer operators}, and~\eqref{eq:H vs L2} measures the density distance in the $L^2$-norm. Unfortunately, a naive estimation yields
\begin{equation}
\label{eq:naive estimation}
\|h \|_{L^2_{1/\rho}} \leq \left\|1/\sqrt{\rho}\right\|_\infty \|h\|_{L^2}.
\end{equation}
While due to ergodicity $1/\sqrt{\rho}$ is indeed defined on all of $\X$, it becomes large in regions of small invariant measure $\rho$, i.e., ``dynamically irrelevant'' regions. This would lead to a very large upper bound for the contraction. For general $h$, a more favorable estimate is indeed difficult to obtain. For us, however, $h = p^\tau_{x_1}-p^\tau_{x_2}$, and we can utilize that these ``dynamically irrelevant'' regions are almost never visited by the system.

To formalize this, we require one additional assumption that can be justified by the metastability of the system. One defining characteristic of metastable systems is the phenomenon that {essentially any} trajectory moves \emph{nearly instantaneously}\footnote{Here, ``nearly instantaneously'' is to be understood in the sense that there is an ``attraction time'' $\tau_a\ll \tau$ such that starting from essentially any initial condition the system will enter one of the metastable sets within time $\tau_a$ with overwhelming probability. Thus, choosing $\tau$ as an intermediate time that is larger than the non-metastable time scales of local fluctuations, is essential. However, as we discuss in Remark~\ref{rem:C}, it is also imperative not to choose $\tau$ too large.}  into one of the metastable sets before continuing. With $A_i,\ldots,A_d\subset \X$ denoting these sets, we can thus assume that the probability density $p^\tau_x(y)$ to move from $x$ to $y$ in time $\tau$ depends almost only on the probabilities to (instantaneously) move to the sets $A_i$ from $x$ (denoted by $c_i(x)$, thus $\sum_{i=1}^d c_i(x) \le 1$) and the probabilities to then move from $A_i$ to $y$ in time $\tau$ (denoted by $p^\tau_{A_i}(y)$), i.e.,
$$
p^\tau_x \approx \sum_{i=1}^d c_i(x) p^\tau_{A_i}.
$$
To be more precise, we require for all $x,y\in\X$ that
$$
\frac{\big\|p_x^\tau - \sum_{i=1}^d c_i(x) p^\tau_{A_i}\big\|_\infty}{ \|p_x^\tau\|_\infty} \approx 0,
$$
which is equivalent to
\begin{equation}
\label{eq:transition_density_decomposition}
p^\tau_x(y) = \frac{1}{1-\delta(x,y)}\sum_{i=1}^d c_i(x) p^\tau_{A_i}(y)
\end{equation}
for some positive function $\delta \colon \X\times\X\rightarrow \R$ with $\|\delta\|_\infty \leq \delta^* \ll 1$. The positivity of $\delta$ comes from the fact that there is a miniscule, but positive probability (density) to move from $x$ to $y$ without first equilibrating inside a metastable set, thus $p^\tau_x(y) > \sum_{i=1}^d c_i(x) p^\tau_{A_i}(y)$.
%This concept is illustrated in Figure~\ref{fig:metastable_dynamics}.
%\begin{figure}
%\caption{Metastable dynamics}
%\label{fig:metastable_dynamics}
%\end{figure}

With this, we can bound the invariant density $\rho$ from below as follows:
\begin{align}
\rho(y) &= \int_\X \rho(x) p^\tau_x(y)\ts dx \nonumber\\
&= \int_\X \rho(x) \frac{1}{1-\delta(x,y)} \sum_{i=1}^d c_i(x) p^\tau_{A_i}(y)\ts dx \nonumber \\
&> \int_\X \rho(x)  \sum_{i=1}^d c_i(x) p^\tau_{A_i}(y)\ts dx \nonumber \\
&= \sum_{i=1}^d \underbrace{\Big( \int_\X \rho(x) c_i(x)\ts dx\Big)}_{=:b_i} p^\tau_{A_i}(y). \label{eq:rho_decomposition}
\end{align}
The $b_i$ can be seen as the equilibrium probability mass almost instantaneously attracted to~$A_i$. For every important metastable set this will not be too small.

As a first step, \eqref{eq:rho_decomposition} allows us to bound the $L^2_{1/\rho}$-norm of $p^\tau_x$ by their $L^1$-norm: 

\begin{lemma}
\label{lem:L1 estimation}
Let the assumption~\eqref{eq:transition_density_decomposition} hold and $b_i, i=1,\ldots,d,$ be defined as in \eqref{eq:rho_decomposition}. Then for any $x\in \X$ it holds that
\begin{equation}
\label{eq:L1 estimation}
\|p^\tau_x\|_{L^2_{1/\rho}}^2 < \frac{1}{(1-\delta^*)\min_i b_i} \|p_{x}^\tau\|_{L^1}
= \frac{1}{(1-\delta^*)\min_i b_i}
\end{equation}
\end{lemma}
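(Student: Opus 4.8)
The plan is a pointwise estimate of the integrand defining $\|p^\tau_x\|_{L^2_{1/\rho}}^2$, followed by a single integration. The claimed equality in \eqref{eq:L1 estimation} is immediate: $p^\tau_x$ is a probability density over $\X$, so $\|p^\tau_x\|_{L^1}=\int_\X p^\tau_x(y)\,dy=1$. Only the first, strict, inequality requires an argument, and as a by-product it yields $p^\tau_x\in L^2_{1/\rho}$ as announced after Definition~\ref{def:reducibility}.

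Fix $x\in\X$. By ergodicity $\rho(y)>0$ for every $y\in\X$, so $p^\tau_x(y)^2/\rho(y)$ is well defined. First I would dispose of the degenerate points: where $p^\tau_x(y)=0$ the integrand vanishes; where $p^\tau_x(y)>0$, the decomposition \eqref{eq:transition_density_decomposition} forces $\sum_{i=1}^d c_i(x)\,p^\tau_{A_i}(y)>0$, hence at least one $p^\tau_{A_i}(y)>0$ and therefore $\sum_{i=1}^d b_i\,p^\tau_{A_i}(y)>0$, so every ratio written below is legitimate. On this set I chain the two structural assumptions: \eqref{eq:rho_decomposition} gives $\rho(y)>\sum_i b_i\,p^\tau_{A_i}(y)$, and \eqref{eq:transition_density_decomposition} together with $\|\delta\|_\infty\le\delta^*$ gives $p^\tau_x(y)\le\tfrac{1}{1-\delta^*}\sum_i c_i(x)\,p^\tau_{A_i}(y)$. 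Combining these,
$$
\frac{p^\tau_x(y)}{\rho(y)}< \frac{p^\tau_x(y)}{\sum_i b_i\,p^\tau_{A_i}(y)}\le \frac{1}{1-\delta^*}\cdot\frac{\sum_i c_i(x)\,p^\tau_{A_i}(y)}{\sum_i b_i\,p^\tau_{A_i}(y)}\le \frac{1}{(1-\delta^*)\,\min_i b_i},
$$
where the last step uses $c_i(x)\le\sum_j c_j(x)\le1$ in the numerator and $b_i\ge\min_i b_i$ in the denominator, so that the common factor $\sum_i p^\tau_{A_i}(y)>0$ cancels.

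Multiplying this pointwise estimate by $p^\tau_x(y)\ge0$ and integrating over $\X$ yields
$$
\|p^\tau_x\|_{L^2_{1/\rho}}^2=\int_\X\frac{p^\tau_x(y)^2}{\rho(y)}\,dy< \frac{1}{(1-\delta^*)\,\min_i b_i}\int_\X p^\tau_x(y)\,dy= \frac{1}{(1-\delta^*)\,\min_i b_i},
$$
which is exactly \eqref{eq:L1 estimation}. The one point needing care is the strictness of the final inequality: the pointwise bound is strict precisely on $\{y:p^\tau_x(y)>0\}$, a set of positive Lebesgue measure because $\int_\X p^\tau_x=1$, so strictness is preserved upon integration. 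I expect the only (mild) obstacle to be the bookkeeping around the null sets where some $p^\tau_{A_i}(y)$ or $p^\tau_x(y)$ vanishes, ensuring each displayed ratio makes sense; everything else is a one-line combination of \eqref{eq:transition_density_decomposition} and \eqref{eq:rho_decomposition}.
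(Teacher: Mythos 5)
Your argument is correct and is essentially the paper's own proof: both chain the strict lower bound $\rho(y)>\sum_i b_i\,p^\tau_{A_i}(y)$ from \eqref{eq:rho_decomposition} with the decomposition \eqref{eq:transition_density_decomposition} to bound the ratio $\frac{\frac{1}{1-\delta(x,y)}\sum_i c_i(x)p^\tau_{A_i}(y)}{\sum_i b_i p^\tau_{A_i}(y)}$ uniformly by $\frac{1}{(1-\delta^*)\min_i b_i}$ via $c_i(x)\le 1$ and $b_i\ge\min_i b_i$, and then integrate against $p^\tau_x$ using $\|p^\tau_x\|_{L^1}=1$. Your extra bookkeeping on the null sets and on why strictness survives integration is a harmless refinement the paper leaves implicit.
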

\begin{proof}
See Appendix~\ref{sec:proofs}.
\end{proof}

This shows that indeed $p^\tau_x\in L^2_{1/\rho}$, as required by Definition~\ref{def:reducibility}. 
Further, Hölder's inequality gives
$$
\|f\|_{L^1} = \int_\X |f(x)| \cdot 1~dx \leq \|f\|_{L^2}\cdot |\X|^{1/2}.
$$
where $|\X|$ denotes the Lebesgue measure of the state space.
This now also allows us to bound the $L^2_{1/\rho}$-norm of the $p^\tau_x$ by their $L^2$-norm:

\begin{lemma}
\label{lem:L2 estimation}
Let the assumption~\eqref{eq:transition_density_decomposition} hold and $b_i, i=1,\ldots,d,$ be defined as in \eqref{eq:rho_decomposition}. Then for any $x_1,x_2\in \X$ it holds that
\begin{equation}
\label{eq:L2 estimation}
\|p^\tau_{x_1} - p^\tau_{x_2} \|_{L^2_{1/\rho}}^2 < \frac{|\X|^{1/2}}{(1-\delta^*)\min_i b_i} \|p_{x_1}^\tau - p_{x_2}^\tau \|_{L^2}.
\end{equation}
\end{lemma}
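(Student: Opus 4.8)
The plan is to reduce the $L^2_{1/\rho}$-norm of the difference $h := p^\tau_{x_1}-p^\tau_{x_2}$ to its $L^1$-norm — exactly along the lines already used for Lemma~\ref{lem:L1 estimation} — and then to apply the Hölder estimate $\|h\|_{L^1}\le |\X|^{1/2}\|h\|_{L^2}$ recorded just above the lemma.

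First I would establish the elementary pointwise bound $h(y)^2 \le |h(y)|\cdot\max\{p^\tau_{x_1}(y),p^\tau_{x_2}(y)\}$ for all $y\in\X$: since $p^\tau_{x_1}(y),p^\tau_{x_2}(y)\ge 0$ we have $|h(y)|=|p^\tau_{x_1}(y)-p^\tau_{x_2}(y)|\le\max\{p^\tau_{x_1}(y),p^\tau_{x_2}(y)\}$, and multiplying by $|h(y)|$ gives the claim. (Using $\max$ rather than the cruder $|h(y)|\le p^\tau_{x_1}(y)+p^\tau_{x_2}(y)$ is what keeps the constant free of a spurious factor $2$.) Next I would recall the pointwise density estimate that underlies Lemma~\ref{lem:L1 estimation}: for every $x,y\in\X$, substituting~\eqref{eq:transition_density_decomposition}, bounding $\tfrac{1}{1-\delta(x,y)}\le\tfrac{1}{1-\delta^*}$, using $c_i(x)\le\sum_{j=1}^d c_j(x)\le 1$ to get $\sum_{i=1}^d c_i(x)\,p^\tau_{A_i}(y)\le(\min_i b_i)^{-1}\sum_{i=1}^d b_i\,p^\tau_{A_i}(y)$, and finally the strict lower bound~\eqref{eq:rho_decomposition}, one obtains $p^\tau_x(y) < \rho(y)\big((1-\delta^*)\min_i b_i\big)^{-1}$; in particular the same bound holds for $\max\{p^\tau_{x_1}(y),p^\tau_{x_2}(y)\}/\rho(y)$.

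Combining the two, dividing by $\rho(y)$ and integrating over $\X$ gives
$$\|h\|_{L^2_{1/\rho}}^2 = \int_\X h(y)^2\,\frac{1}{\rho(y)}\,dy \le \int_\X |h(y)|\,\frac{\max\{p^\tau_{x_1}(y),p^\tau_{x_2}(y)\}}{\rho(y)}\,dy < \frac{1}{(1-\delta^*)\min_i b_i}\,\|h\|_{L^1},$$
and Hölder's inequality $\|h\|_{L^1}\le|\X|^{1/2}\|h\|_{L^2}$ then yields the asserted estimate. I do not expect a genuine obstacle here: the argument is a short variant of the proof of Lemma~\ref{lem:L1 estimation} applied to a difference of densities. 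The only step requiring a little care is the pointwise bookkeeping in the second paragraph — selecting $\max$ instead of the sum so that the constant is sharp, and noting that the inequality is strict because $\delta>0$ makes~\eqref{eq:rho_decomposition} a strict inequality — after which the reduction to $L^1$ and then to $L^2$ is immediate.
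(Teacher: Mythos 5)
Your proof is correct and follows essentially the same route as the paper, which simply declares the argument ``completely analogous'' to Lemma~\ref{lem:L1 estimation}: bound one factor of $|h|$ pointwise via the decomposition~\eqref{eq:transition_density_decomposition} and the lower bound~\eqref{eq:rho_decomposition} to get $\|h\|_{L^2_{1/\rho}}^2 < \big((1-\delta^*)\min_i b_i\big)^{-1}\|h\|_{L^1}$, then apply H\"older. The only (immaterial) difference is bookkeeping: you use $|h(y)|\le\max\{p^\tau_{x_1}(y),p^\tau_{x_2}(y)\}$ together with the pointwise bound on each density, whereas the paper invokes $\max_i|c_i(x_1)-c_i(x_2)|\le 1$ in the analogue of~\eqref{eq:pAi_estimeta}; both yield the same constant.
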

\begin{proof}
See Appendix~\ref{sec:proofs}.
\end{proof}

Of course, due to the squared norm on the left-hand side, this is not a Lipschitz bound. 
However, recall that our main motivation for deriving a bound for the contraction is to show that large distances in $L^2_{1/\rho}$ are not overly compressed under the embedding into $\H$, as illustrated in Figure~\ref{fig:Bad Embedding}.
We therefore abstain from deriving such a bound for very small distances in $L^2_{1/\rho}$ and only estimate the contraction of pairs of densities $p^\tau_{x_1}, p^\tau_{x_2}$ with
\begin{equation}
\label{eq:distance restriction}
\|p^\tau_{x_1}-p^\tau_{x_2}\|_{L^2_{1/\rho}} \geq C
\end{equation}
for some constant $C>0$. That $C$ is reasonably large is discussed in Remark~\ref{rem:C} below.
For such differences, we can then relate the $L^2$ to the $L^2_{1/\rho}$-norm, i.e.,
\begin{equation*}
\|p^\tau_{x_1} - p^\tau_{x_2} \|_{L^2_{1/\rho}} < \frac{ |\X|^{1/2}}{C(1-\delta^*)\min_i b_i} \|p_{x_1}^\tau - p_{x_2}^\tau \|_{L^2}.
\end{equation*}
Together with Lemma~\ref{lem:stretchingfactor} and assumption~\eqref{eq:assumption Mercer coefficients}, this gives
\begin{equation}
\label{eq:restricted contraction bound}
\|p^\tau_{x_1} - p^\tau_{x_2} \|_{L^2_{1/\rho}} < \frac{ |\X|^{1/2}}{C(1-\delta^*)\min_i b_i} \sqrt{\frac{\lambda_{i_\text{max}}}{c^*_{i_\text{max}}}} \ts \big\|\mu\big(p_{x_1}^\tau\big) - \mu\big(p_{x_2}^\tau\big) \big\|_{\H},
\end{equation}
which is our contraction bound.
%\begin{equation}
%\label{eq:redefined compression factor}
%\mathscr{C}_C(\mu) := \sup \Bigg\{ \frac{\|p-q\|_{L^2_{1/\rho}}}{\|\mu(p)-\mu(q)\|_\H}~\bigg|~p,q\in\M, \|p-q\|_{L^2_{1/\rho}} \geq C \Bigg\},
%\end{equation}
%we finally get
%\begin{equation}
%\mathscr{C}_C(\mu) < \frac{|\X|^{1/2}}{C(1-\delta^*)\min_i b_i} \sqrt{\frac{\lambda_{i_\text{max}}}{c^*_{i_\text{max}}}}.
%\end{equation}

\begin{remark} \label{rem:C}
For the distortion of the transition manifold under the embedding the essential property is that the global ``spanning structure'' of the manifold is well preserved. In other words, the embedded $p_x^{\tau}, p_y^{\tau}$ should be well-separated for $x,y \in \X$ from different metastable sets~$A_i$. Since the embedding is continuous, the transition paths connecting them will be preserved as well.

As $p^{\tau}_{A_i} \to \rho$ as $\tau\to \infty$ for every $i$, it is important that $\tau$ is not too large, such that the transition manifold is a meaningful object. In a other words, $\tau$ should be such that $p^{\tau}_{A_i}$ and $p^{\tau}_{A_j}$ are sufficiently distinct for $i\neq j$. Thus, we \emph{require} that $\smash{\| p^{\tau}_{A_i} - p^{\tau}_{A_j} \|_{L^2_{1/\rho}}}$ is sufficiently large for $i\neq j$, hence $C$ can be chosen as a constant such that $1/C$ is reasonably small.

%Using the above notation, in Appendix~\ref{sec:proofs} we show that
%THIS IS WRONG
%\begin{equation} \label{eq:Cbound}
%\|p_x^{\tau}-p_y^{\tau}\|_{L^2_{1/\rho}}^2 \ge \frac{1-\delta^*}{\max_i b_i} \|c(x) - c(y)\|_2^2 \ge d\|c(x) - c(y)\|_2^2,
%\end{equation}
%where $c(x) := (c_i(x))_{i=1}^d$. We recall, that the transition manifold represents the progress of the system between metastable sets. Thus, for $x,y$ from different metastable sets the ``hitting probabilities'' $c(x),c(y)$ (as vectors with $d$ components) should be sufficiently different. Thus, as long as the number $d$ of metastable sets is moderate, \eqref{eq:Cbound} the resolution $C$ from \eqref{eq:distance restriction} above which the contraction bound \eqref{eq:restricted contraction bound} is valid, is reasonably small.
\end{remark}

\begin{remark}
The bounds~\eqref{eq:expansion bound} and \eqref{eq:restricted contraction bound} guarantee the well-posedness of the overall embedding and pa\-ra\-me\-tri\-za\-tion problem as the relevant expansion and contraction of the transition manifold cannot become arbitrarily large. We will support this statement with numerical evidence (see Section~\ref{sec:Horseshoe potential}) showing that the distortion is, in practice, indeed small.
It should be noted, however, that we do not expect the analytically derived bounds to perform well as quantitative error estimates, as many of the estimates that led to them are rather rough.
\end{remark}

}

\section{Illustrative examples and applications}
\label{sec:examples}

\subsection{Reaction coordinate of the M\"uller--Brown potential}

As a first illustrating example, we compute the reaction coordinate of the two-dimensional M\"uller--Brown potential \cite{Mueller1980} via the new kernel-based Algorithm~\ref{algo:KernelRC}. The potential energy surface (see Figure~\ref{fig:MB_pot} (a)) possesses three local minima, where the two bottom minima are separated only by a rather shallow energy barrier. Correspondingly, the system's characteristic long-term behavior is determined by the rare transitions between the minima. These transitions happen predominantly along the potential's \emph{minimum energy pathway} (MEP), which is shown as white dashed line and was computed using the zero temperature string method \cite{E2002,E2007}.

For two sets $A,B\subset \X$ and a starting point $x\in\X$, the \emph{committor function} $q_{AB}(x)$ is defined as the probability that the process hits set $A$ before hitting set $B$, provided it started in $x$ at time zero. For a precise definition see~\cite{SS13}. For the M\"uller--Brown potential, the committor function associated with the top left and bottom right energy minima, shown in Figure~\ref{fig:MB_pot}~(b), can be considered an optimal reaction coordinate~\cite{ElEtAl17}. Therefore, we use the (qualitative) comparison with the committor function as a benchmark for our reaction coordinate.
Note that the computation of the committor function requires global knowledge of the metastable sets and is often not a practical option for the identification of reaction coordinates.

\begin{figure}
    \centering
    \begin{tabular}{ccc}
    (a)
    &
    (b)
    &
    (c)
    \\
    \includegraphics[scale=0.85]{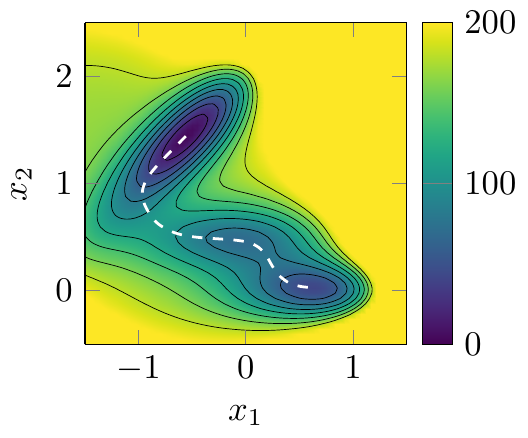}
    &
    \includegraphics[scale=0.85]{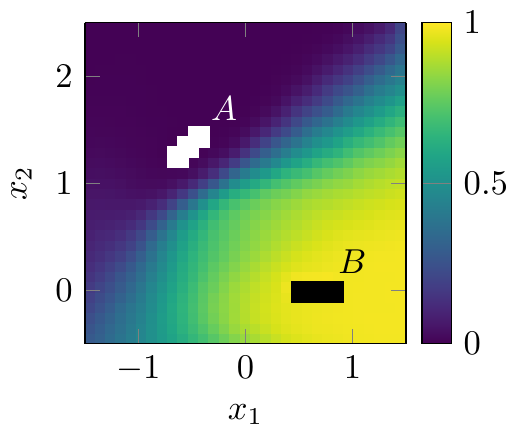}
    &
    \includegraphics[scale=0.85]{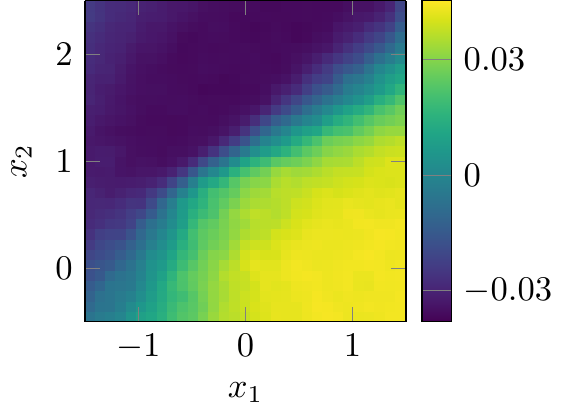}
    \end{tabular}
    \caption{(a) The M\"uller--Brown potential energy function with its three characteristic local minima and the connecting MEP (white line). (b) The committor function $q_{AB}$ associated with the areas around the top left ($A$) and bottom right $(B)$ energy minimum. (c) Reaction coordinate $\xi$ of the Müller--Brown potential, computed by Algorithm~\ref{algo:KernelRC}.}
    \label{fig:MB_pot}
\end{figure}

The governing dynamics is given by an overdamped Langevin equation~\eqref{eq:Langevin dynamics}, which we solve numerically using the Euler--Maruyama scheme. At inverse temperature $\beta=0.05$, the lag time $\tau=0.03$ falls between the slow and fast time scales and is thus defined to be the intermediate lag time. The test points $\{x_1,\ldots,x_N\}$ required by Algorithm~\ref{algo:KernelRC} are given by a regular $32\times 32$ grid discretization of the domain $[-1.5, 1.5]\times[-0.5, 2.5]$. For the embedding, the Gaussian kernel
\begin{equation}
\label{eq:Gaussian kernel}
k(x,x') = \exp\left( -\frac{\|x-x'\|_2^2}{\sigma} \right)
\end{equation}
with bandwidth $\sigma=0.1$ is used and for the manifold learning task in Algorithm~\ref{algo:KernelRC}, the diffusion maps algorithm with bandwidth parameter $0.1$.
The reaction coordinate $\xi$ for the test points is shown in Figure~\ref{fig:MB_pot}~(c). We observe remarkable resemblance to the committor function.

\hl{
\begin{remark}
The kernel evaluations used for the RKHS embedding of densities and the kernel evaluations used in the diffusion maps algorithm should not be mixed up as they serve different purposes.
The former is used to embed the state space densities into $\mathbb{H}$, while the latter is used to approximate the Laplace--Beltrami operator on the manifold in $\mathbb{H}$ that is to learn (this is the principle on which the diffusion maps algorithm is based).
Even though the Gaussian kernel is a popular choice due to its favorable characteristics, one has great freedom in choosing a kernel for the RKHS-embedding, whereas in the classical diffusion maps algorithm, predominantly the Gaussian kernel is used, so the repeated use of the Gaussian kernel does not constitute a connection.
Moreover, the fact that in this example identical bandwidth parameters were used was a mere coincidence.
We do not see a way to unify these kernel evaluations, neither on a conceptual nor algorithmic level.
\end{remark}
}

\subsection{Distortion under the Whitney and kernel embeddings}
\label{sec:Horseshoe potential}

We now demonstrate the distortion of the transition manifold under the embedding via \hl{the conventional} Algorithm~\ref{algo:WhitneyRC} (Whitney embedding) and \hl{our new} Algorithm~\ref{algo:KernelRC} (kernel-based embedding). To this end, we consider the two-dimensional potential depicted in Figure~\ref{fig:Horseshoe potential}~(a). Good reaction coordinates of a diffusion process in this potential should parametrize the ``horseshoe-like'' MEP shown as a white dashed line. Such a reaction coordinate is depicted in Figure~\ref{fig:Horseshoe potential}~(b).

\begin{figure}
    \centering
    \begin{tabular}{cc}
    (a) & (b) \\
    \includegraphics[scale=1]{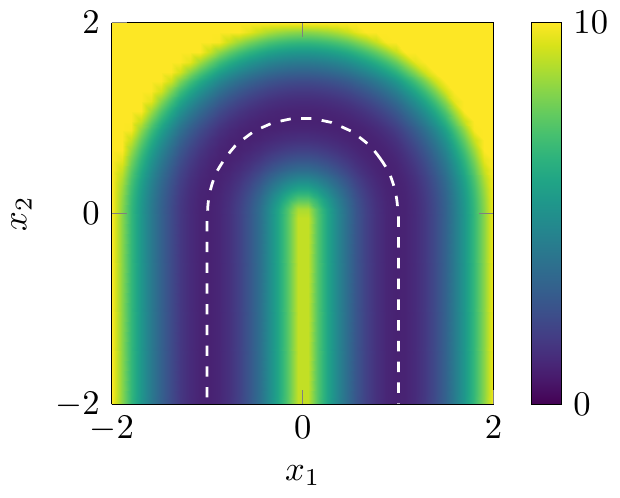}
    &
    \includegraphics[scale=1]{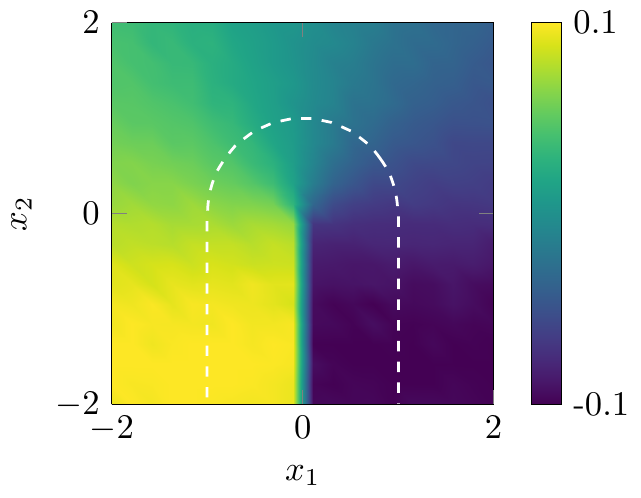}
    \end{tabular}
    \caption{Horseshoe potential. (a) Potential energy function. (b) Reaction coordinate computed with the kernel algorithm. The white dashed line represents the MEP.}
    \label{fig:Horseshoe potential}
\end{figure}

As test points $x_i$, $N=200$ uniformly distributed random points in the region $\X=[-2,2]^2$ are drawn. Per point, $M=100$ short trajectories of length $\tau=2$ are computed to sample the~$p^\tau_{x_i}$.

\subsubsection*{Whitney embedding}

For the Whitney embedding, the expected manifold dimension $r=1$ is assumed to be known in advance. To demonstrate the different effects of ``good'' and ``bad'' embedding functions, two $2r+1$-dimensional linear observables $\eta:\R^2\rightarrow\mathbb{R}^3$ were chosen:
$$
\eta_g \colon x\mapsto A_g\ts x,\qquad \eta_b \colon x\mapsto A_b\ts x,\qquad A_g,A_b\in\R^{3\times 2},
$$
and the corresponding embedding functions $\mathcal{F}_g, \mathcal{F}_b$ constructed via~\eqref{eq:integral_embedding}.
The coefficients of $A_g$ of the 
%\hl{[\ldots]}
observable function $\eta_g$ were chosen randomly via the Matlab command
$$
\verb!rng(1); A_g=rand(3,2)-0.5!,
$$
\hl{
which resulted in the matrix
$$
A_g \approx \begin{pmatrix}
-0.08 & -0.20 \\
0.22 & -0.35\\
-0.49 & -0.41
\end{pmatrix}.
$$
}
\hl{
Under the embedding $\mathcal{F}_g$, all parts of the transition manifold can indeed be distinguished very well, see Figure~\ref{fig:Horseshoe Whitney}~(a). This is due to the fact that two distinct points of the transition pathway of the potential, for example on the two opposite ``branches'', are never mapped to the same point under $\eta_g$.}

On the other hand, the matrix $A_b$ of the ``bad'' observable function $\eta_b$ was \hl{intentionally} constructed to consist of three row vectors that are pairwise almost linearly dependent, \hl{and that essentially ignore the $x_1$-component of state space points:}
$$
A_b = \begin{pmatrix}
0 & 1 \\
\varepsilon & 1+\varepsilon\\
-\varepsilon & 1-\varepsilon
\end{pmatrix}
\qquad \text{with } \varepsilon=0.05.
$$
\hl{
This way, points on the two opposite branches of the transition pathway but with the same $x_2$-coordinate are mapped to almost the same point in $\mathbb{R}^3$. The result is an embedding of the transition manifold in which the two branches can hardly be distinguished, see Figure~\ref{fig:Horseshoe Whitney}~(b). This makes the numerical identification of the manifold structure extremely difficult.}

\hl{
Note that the judgment of quality of the embedding function has to be performed manually after the embedding, as it is impossible to reliably choose good embedding functions without detailed a priori knowledge of the global structure of the transition manifold or transition pathway. While in our experience, randomly chosen coefficients typically result in ``good-enough'' embedding functions, this uncertainty in the numerical algorithm should be seen as one of the main reasons to use the more consistent kernel embeddings instead.
}
\begin{figure}
\centering
\begin{tabular}{cc}
(a) & (b) \\
\includegraphics[scale=1]{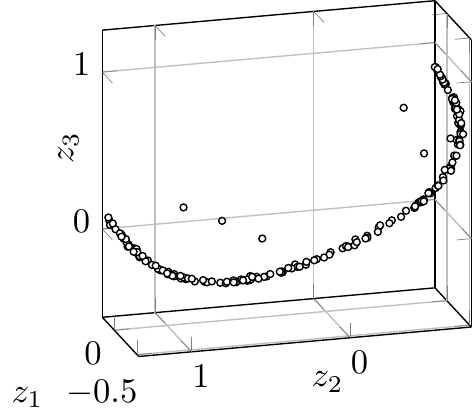}
&
\includegraphics[scale=1]{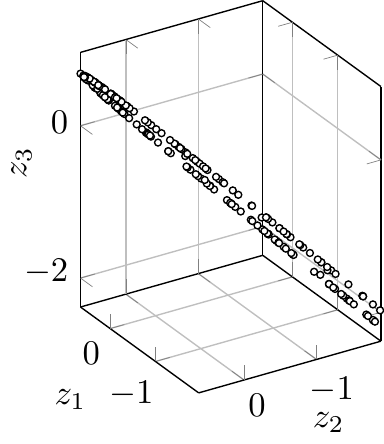}
\end{tabular}
\caption{Whitney embeddings of the test points for different observable functions. (a)~Pairwise strongly linearly independent coefficient vectors, i.e., ``good'' observables. (b) Almost pairwise linearly dependent coefficient vectors, i.e., ``bad'' observables.}
\label{fig:Horseshoe Whitney}
\end{figure}

\subsubsection*{Kernel embedding}

For the kernel embedding, we again utilize the Gaussian kernel~\eqref{eq:Gaussian kernel} with bandwidth~$\sigma=10^{-3}$. 
The result of the kernel embedding of the test points is an approximation of the kernel distance matrix 
$$
D_{ij} = \big\|\mu(p^t_{x_i}) - \mu(p^t_{x_j})\big\|_\H
$$ 
(see Algorithm~\ref{algo:KernelRC}), which cannot be visualized directly. We thus apply the Multidimensionl scaling (MDS) algorithm to $D$, in order to visualize the level of similarity between the embedded densities.

Given a distance matrix $D$, MDS generates points $z_i\in\R^k$ in a Euclidean space of a chosen dimension $k\in\N$ such that the pairwise distance between the $z_i$ corresponds to the distances in $D$. For an overview of different MDS methods, see for example~\cite{Young2013}. We here use the implementation of classical MDS given by the \verb!cmdscale! method in Matlab.

The MDS representation of the kernel distance matrix for $k=2$ is shown in Figure~\ref{fig:MDS representations}~(a). The curved structure of the MEP is immediately visible.
Moreover, it is also possible to visualize the corresponding $L^2_{1/\rho}$ and $L^2$ distance matrices via MDS, i.e., the matrices
$$
\big(D_{L^2_{1/\rho}}\big)_{ij}:=\big\| p^t_{x_i} - p^t_{x_j} \big\|_{L^2_{1/\rho}} \quad \text{and}\quad \big(D_{L^2}\big)_{ij}:= \big\| p^t_{x_i} - p^t_{x_j} \big\|_{L^2}.
$$
The results are shown in Figure~\ref{fig:MDS representations}~(b)\ts\&\ts(c).

\begin{figure}
\centering
\begin{tabular}{ccc}
(a) & (b) & (c) \\
\includegraphics[scale=0.9]{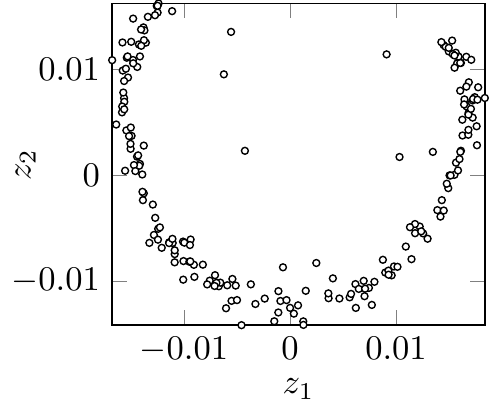}
&
\includegraphics[scale=0.9]{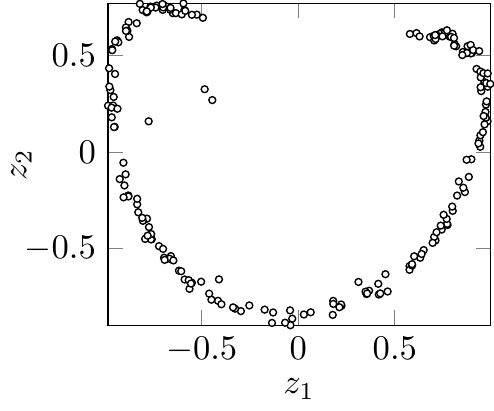}
&
\includegraphics[scale=0.9]{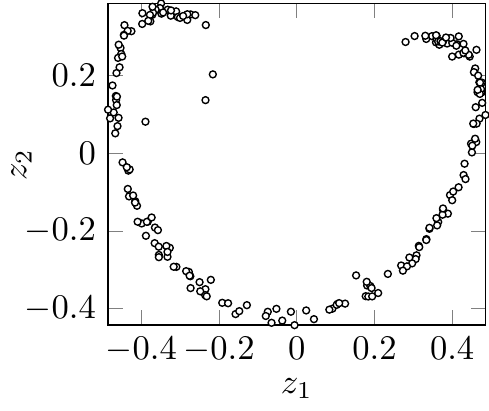}
\end{tabular}
\caption{MDS representations of different distance matrices between the transition densities. (a) Kernel distance matrix between embedded transition densities $\mu(p^t_x)$. The point cloud is a representations of the fuzzy transition manifold embedded into $\H$. (b) $L^2_{1/\rho}$ distance matrix between transition densities $p^t_x$. (c) $L^2$ distance matrix between transition densities~$p^t_x$. }
\label{fig:MDS representations}
\end{figure}

The MDS representation of $D$ is structurally very similar to $D_{L^2_{1/\rho}}$ and $D_{L^2}$. This suggests that the $L^2_{1/\rho}$ and $L^2$ distances are preserved very well under $\mu$, up to a constant factor.
To confirm this, we now compute the \hl{empirical} maximum distortion of the $L^2_{1/\rho}$ metric based on the given test points, i.e., $\mathscr{D}_N(\mu):= \mathscr{C}_N(\mu) \ts \mathscr{E}_N(\mu)$ where
$$
\mathscr{C}_N(\mu) := \max_{\substack{i,j=1,\ldots,N \\ i\neq j}} \frac{\|p^\tau_{x_i}-p^\tau_{x_j}\|_{L^2_{1/\rho}}}{\|\mu(p^\tau_{x_i})-\mu(p^\tau_{x_j})\|_\H}, \qquad \mathscr{E}_N(\mu) := \max_{\substack{i,j=1,\ldots,N \\ i\neq j}} \frac{\|\mu(p^\tau_{x_i})-\mu(p^\tau_{x_j})\|_\H}{\|p^\tau_{x_i}-p^\tau_{x_j}\|_{L^2_{1/\rho}}}.
$$
For large enough $N$, we expect $\mathscr{D}_N(\mu)$ to be a good estimator for the true distortion $\mathscr{D}(\mu)$. 

The blue graph in Figure~\ref{fig:Horseshoe distortion} shows the dependence of \hl{the empirical} distortion on the kernel parameter $\sigma$. \hl{Here the minimum is $\mathscr{D}_N(\mu)= 36.7$ at $\sigma\approx 10^{-3}$. Interpreting $\mathscr{D}_N(\mu)$ as the condition number of the kernel-based embedding problem, the problem can be described as reasonably well-conditioned.}

\hl{
Analogously, we can define the \hl{empirical} maximum distortion of the Whitney embedding as $\mathscr{D}_N(\mathcal{F}):= \mathscr{C}_N(\mathcal{F}) \ts \mathscr{E}_N(\mathcal{F})$, where
$$
\mathscr{C}_N(\mathcal{F}) := \max_{\substack{i,j=1,\ldots,N \\ i\neq j}} \frac{\|p^\tau_{x_i}-p^\tau_{x_j}\|_{L^2_{1/\rho}}}{\|\mathcal{F}(p^\tau_{x_i})-\mathcal{F}(p^\tau_{x_j})\|_{\R^3}}, \qquad \mathscr{E}_N(\mathcal{F}) := \max_{\substack{i,j=1,\ldots,N \\ i\neq j}} \frac{\|\mathcal{F}(p^\tau_{x_i})-\mathcal{F}(p^\tau_{x_j})\|_{\R^3}}{\|p^\tau_{x_i}-p^\tau_{x_j}\|_{L^2_{1/\rho}}}.
$$
For a given embedding $\mathcal{F}$, this distortion can again be computed numerically. For the ``good'' embedding $\mathcal{F}_g$, we obtain $\mathscr{D}_N(\mathcal{F}_g)\approx 6\cdot 10^2$, while for the ``bad'' embedding $\mathcal{F}_b$, we obtain $\mathscr{D}_N(\mathcal{F}_b)\approx 7\cdot 10^3$. The kernel embedding is therefore much better conditioned than both Whitney embeddings.}

\begin{remark}
Analogously, we can also define and compute the maximum distortion $\mathscr{D}_N(\mu)$ of the $L^2$-metric (red graph in Figure~\ref{fig:Horseshoe distortion}). Here, for $\sigma\approx 10^{-1}$ we obtain $\mathscr{D}_N(\mu)=2.7$, i.e.,  the embedding becomes nearly isometric. 
This is not surprising as it has been shown in~\cite{Sri10} that for radial kernels $k_\sigma(x,y) = \sigma^{-d} g(\sigma^{-1}\|x-y\|)$ where $g$ is bounded, continuous, and positive definite, it holds that
$$
\lim_{\sigma\rightarrow 0} \|\mu_{k_\sigma}(p) - \mu_{k_\sigma}(q)\|_\H = \|p-q\|_{L^2}.
$$
The Gaussian kernel belongs to this class of kernels.
We thus expect that by increasing the sample number $M$ of the transition densities and further decreasing $\sigma$, the distortion can be reduced further.
However, recall that for our application, only the distortion of the $L^2_{1/\rho}$ distance is relevant.
\end{remark}

\begin{figure}
\centering
\includegraphics[scale=1]{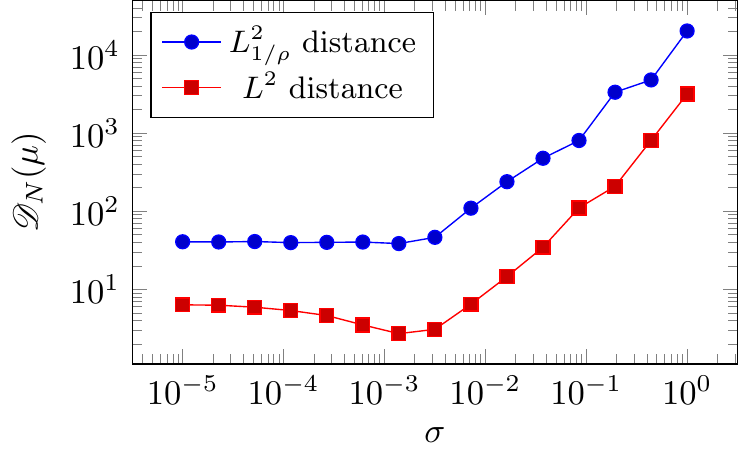}
\caption{Maximum distortion $\mathscr{D}(\mu)$ of the $L^2_{1/\rho}$ and $L^2$ distance under the kernel embedding $\mu$ for the Gaussian kernel depending on the kernel bandwidth $\sigma$.}
\label{fig:Horseshoe distortion}
\end{figure}

\subsection{Alanine dipeptide}

We now demonstrate the applicability of Algorithm~\ref{algo:KernelRC} to realistic, high-dimensional molecular systems by computing reaction coordinates of the Alanine dipeptide. The peptide, depicted in Figure~\ref{fig:aladipep_structure}~(a), consists of 22 atoms, the state space $\X$ thus has the dimension $n=66$.

\begin{figure}[h]
\centering
\begin{minipage}{.3\textwidth}
\centering
(a)\\
\includegraphics[width=\textwidth]{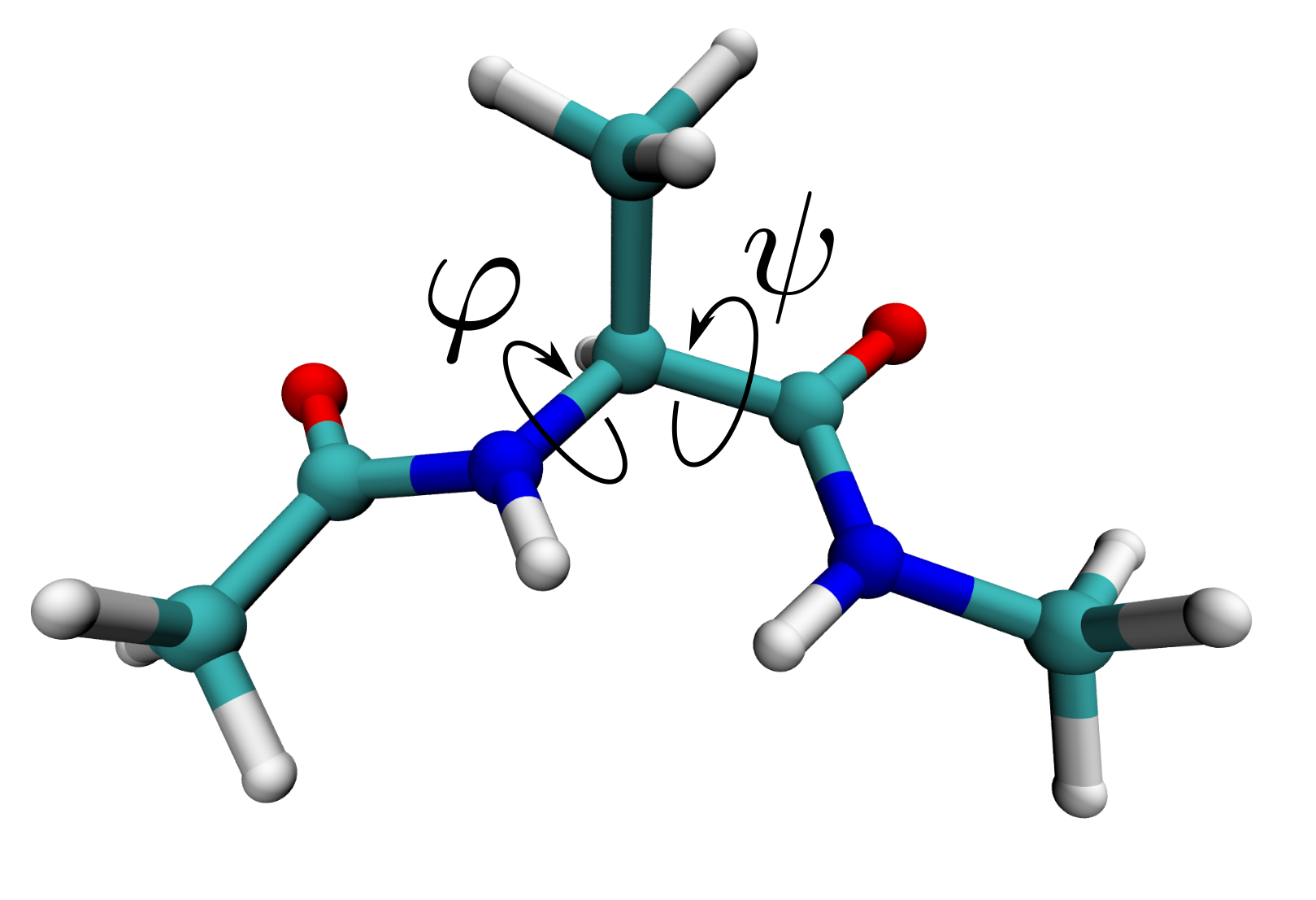}
\end{minipage}
\begin{minipage}{.3\textwidth}
\centering
(b)\\
\includegraphics[width=\textwidth]{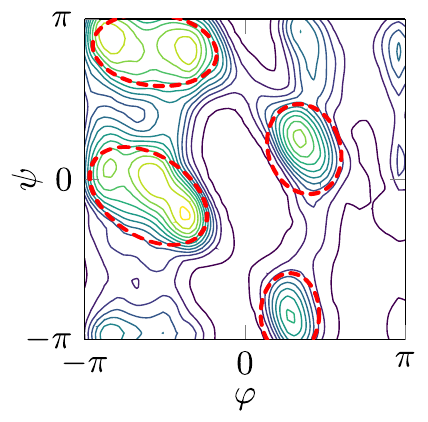}
\end{minipage}
\caption{The Alanine dipeptide. (a) Three-dimensional structure with the two essential dihedral angles $(\varphi,\psi)$ highlighted. (b) The Ramachandran plot of $(\varphi,\psi)$ reveals four local energy minima, i.e., metastable sets.}
\label{fig:aladipep_structure}
\end{figure}

It is well-known that the essential long-term behavior of this system is governed by the metastable transitions between four local minima of the potential energy surface (PES)~\cite{Chekmarev2004,Smith1999}. These minima are clearly visible when projecting the PES onto two specific backbone dihedral angles $(\varphi,\psi)$ that we call \emph{essential} from now on (see Figure~\ref{fig:aladipep_structure}~(b)). The transition between the metastable states happens along minimal energy pathways that we aim to reveal with our reaction coordinate.
Note however that no information about the existence of the two essential dihedral angles was used in our experiments, and we perform all of the analysis on the full 66-dimensional data.

% \subsubsection*{Setup}

The simulations were performed using the Gromacs molecular dynamics software~\cite{Berendsen1995}. We consider the molecule in explicit aqueous solution at temperature $400\,$K (the water molecules are discarded prior to further analysis). To generate the test points $x_i$, $N=1000$ snapshots from a long, equilibrated trajectory were subsampled. This guarantees that the $x_i$ cover the dynamically relevant regions of $\X$, i.e., the metastable sets and transition pathways. \hl{The values of the dihedral angles $\varphi$ and $\psi$ of the test points are shown in Figure~\ref{fig:aladipep diheral comparison} (the $x$- and $y$-coordinates of the points). We see that the metastable sets and transition pathways from Figure~\ref{fig:aladipep_structure}~(b) are adequately covered. Note however that the projection onto the $(\varphi,\psi)$-space here serves only illustrative purposes; we continue to work with the test points in the full 66-dimensional space.}

The intermediate lag time $\tau=20\,\mathrm{ps}$ falls between the slow and fast time scales. For strategies to estimate $\tau$ prior to simulation, see~\cite{BBS18}. For each test point $x_i$, $M=100$ simulations of length $\tau$ were performed, which took $40\,\mathrm{h}$ on a 96 core cluster. The resulting point clouds $\{y_i^{(l)}$, $l=1,\ldots,M\}$ are samplings of the densities $p^\tau_{x_i}$.

To compute the kernel distance matrix $D$ from the simulation data, the Gaussian kernel~\eqref{eq:Gaussian kernel} with bandwidth $\sigma=0.1$ was chosen. 
%Strategies from other kernel-based algorithms for choosing an optimal bandwidth parameter~\cite{BandwidthStrategies} can in principle also be applied here, but might not be necessary, as our algorithm appears to be very stable under the choice of $\sigma$.
For the plug-in manifold learning algorithm that is applied to $D$, the diffusion maps algorithm with bandwidth parameter $0.01$ was used.
The analysis of the simulation data was performed in Matlab and took $4$ minutes on a 4 core laptop.

% \subsubsection*{Results}

Figure~\ref{fig:aladipep analysis}~(a) shows the leading spectrum of the diffusion map matrix that was computed based on $D$. The first diffusion map eigenvalue is always one, and the associated eigenvector carries no structural information. Therefore, a spectral gap after the third sub-dominant eigenvalue indicates that the underlying transition manifold is intrinsically three-dimensional. The associated three subdominant eigenvectors now are the final reaction coordinate. For each of the 1000 test points, the values of the three eigenvectors are shown in Figure~\ref{fig:aladipep analysis}~(b). This can be seen as the embedding of the test points into the reaction coordinate space. Here we observe four clusters of points, and three connecting paths. 
\hl{In Figure~\ref{fig:aladipep diheral comparison}, the values of the dihedral angles $\varphi$ and $\psi$ at the test points are compared to the values of the three components of the computed reaction coordinates, shown in color. We see that areas of almost constant color correspond to the four metastable sets from Figure~\ref{fig:aladipep_structure}. Thus, the computed reaction coordinate is able to identify the metastable sets and resolve transitions between them.}
%\andreas{Write more about the transition regions.}

\begin{figure}
\centering
\begin{minipage}{.34\textwidth}
\centering
(a)\\
\includegraphics[scale=1]{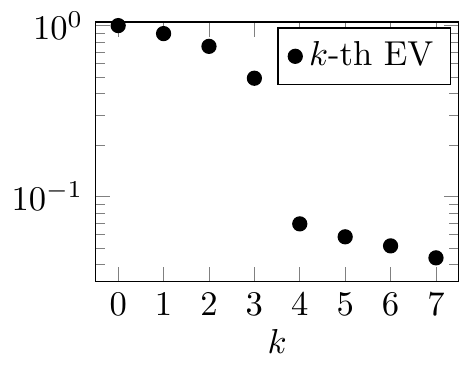}
\end{minipage}
\begin{minipage}{.5\textwidth}
\centering
(b)\\
\includegraphics[scale=1]{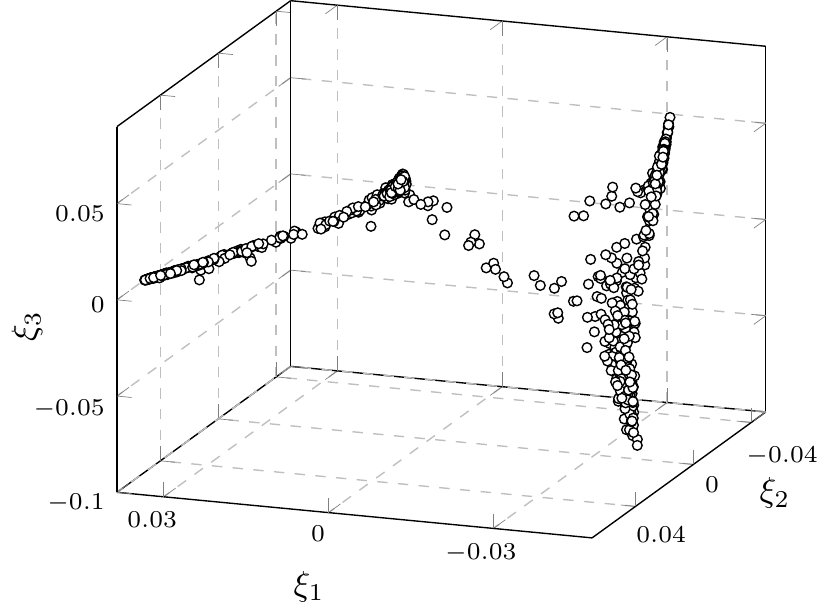}
\end{minipage}
%\begin{minipage}{.36\textwidth}
%\centering
%(c)\\
%\includegraphics[scale=1]{AladipepMDS}
%\end{minipage}
\caption{Analysis of the kernel distance matrix $D$. (a) Eigenvalues of the diffusion map matrix. The existence of three eigenvalues close to 1 (not counting the eigenvalue 1 itself) indicates a three-dimensional reaction coordinate. (b) Test points in the space of the three sub-dominant diffusion map eigenvectors, i.e., the final three-dimensional reaction coordinate.
%(c) MDS representation of $D$. The distance between the plotted points is representative of the distance between the RKHS-embedded transition densities.
}
\label{fig:aladipep analysis}
\end{figure}

\begin{figure}
\centering
\begin{tabular}{ccc}
\includegraphics[scale=0.9]{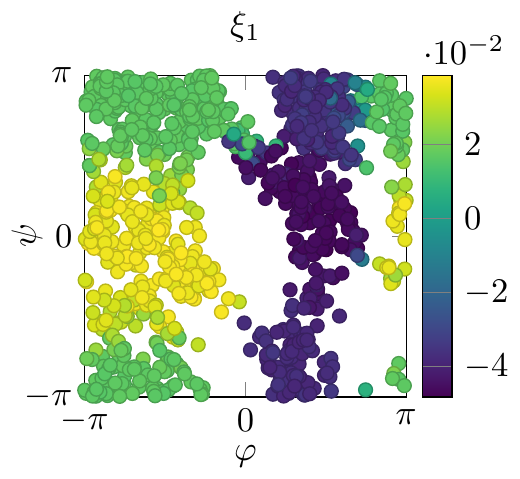}
&
\includegraphics[scale=0.9]{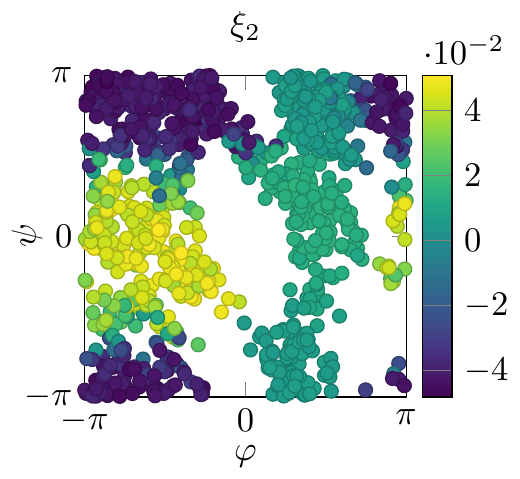}
&
\includegraphics[scale=0.9]{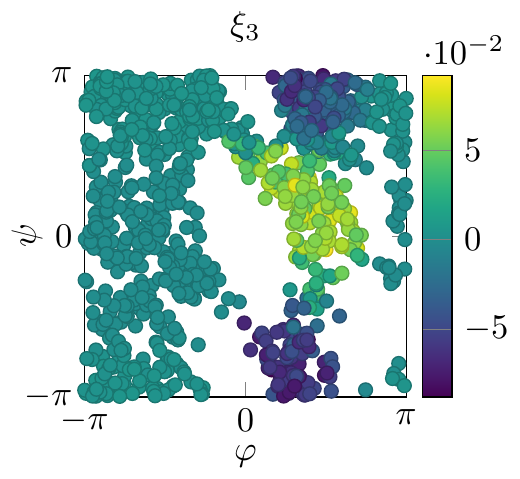}
\end{tabular}
\caption{The dihedral angles in the test points, colored by the three components of the reaction coordinate $\xi$. The coordinate \hl{$\xi_1$ primarily describes transitions in the angle $\varphi$, whereas $\xi_2$ and $\xi_3$ describe transitions in the angle $\psi$ for low and high values of $\psi$, respectively.}}
\label{fig:aladipep diheral comparison}
\end{figure}

\section{Conclusion and future work}
\label{sec:Conclusion}

In this work, we have analyzed the embedding of manifolds that lie in certain function spaces into reproducing kernel Hilbert spaces. Moreover, we have proposed efficient numerical algorithms for learning parametrizations of these embedded manifolds from data.
The question is motivated by the recent insight that parametrizations of the so-called transition manifold, a manifold consisting of the transition density functions of a stochastic system, are strongly linked to reduced coordinates for that system. The method can thus be used for coarse graining a given system.

Compared to previous approaches based on random embeddings into a Euclidean space, the new kernel-based approach eliminates the need to know the transition manifold dimension a priori. Furthermore, if a universal kernel is used, the topological structure of the transition manifold is guaranteed to be preserved under the embedding. We have derived bounds for the geometric distortion of the transition manifold under the RKHS embedding, which can be interpreted as the condition of the overall coarse graining procedure. Correspondingly, the numerical algorithm was demonstrated to be very robust, especially when compared to random embeddings, and, in realistic applications, we obtained very favorable results regarding algorithmic distortion bounds.

There are several new avenues to use the broader theory of kernel embeddings to characterize the kernel embedding of transition manifolds. First, we plan to improve the theoretic distortion bounds derived in Section~\ref{sec:kernel mean embedding} by considering different established interpretations of the metric defined by $d(p,q)=\|\mu(p) - \mu(q)\|_\H$. For an overview, see~\cite{Sri10}.

Recently, the spectral theory of transfer operators was extended to reproducing kernel Hilbert spaces in~\cite{KSM17}. The usefulness of this new theory for the data-driven conformation analysis of molecular systems was demonstrated in~\cite{KBSS18}. As the transition manifold can be defined via the transfer operator\footnote{The fuzzy transition manifold is the image of all Dirac densities under the transfer operator, i.e., $\widetilde{\mathbb{M}} = \{ \mathcal{T}^t \delta_x \mid x\in\X\}$.}, it seems natural to attempt to relate the embedded transition manifold to the kernel transfer operators and corresponding embedded transfer operators defined in~\cite{KSM17}.

Finally, as illustrated in \cite{BH10, BouHa17, BouHaJCD}, RKHSs can act as \emph{linearizing spaces} in the sense that performing linear analysis in the RKHS can capture strong nonlinearities in the original system.
A typical example is the problem of linear separability in data classification: A data set which is not linearly separable might be easily separated when mapped into a nonlinear feature space. In our current context, this means that efficient linear manifold learning methods might be suitable to parametrize the embedded manifold, if the kernel is chosen appropriately. We will investigate whether a corresponding theory can be developed.

\section*{Acknowledgements}

The authors would like to thank the anonymous reviewers for constructive comments and suggestions that helped to improve the paper.

This research has been partially funded by Deutsche Forschungsgemeinschaft (DFG) through grant CRC 1114, projects A01,~B03, and~B06.

\bibliographystyle{abbrv}
\bibliography{KernelTM}

\begin{thebibliography}{10}

\bibitem{Abraham2011}
I.~Abraham, Y.~Bartal, and O.~Neiman.
\newblock Advances in metric embedding theory.
\newblock {\em Advances in Mathematics}, 228(6):3026--3126, 2011.

\bibitem{BaRo95}
J.~R. Baxter and J.~S. Rosenthal.
\newblock {Rates of convergence for everywhere-positive Markov chains}.
\newblock {\em Stat. Probab. Lett.}, 22(4):333--338, 1995.

\bibitem{Berendsen1995}
H.~Berendsen, D.~van~der Spoel, and R.~van Drunen.
\newblock Gromacs: A message-passing parallel molecular dynamics
  implementation.
\newblock {\em Computer Physics Communications}, 91(1):43--56, 1995.

\bibitem{Best2005}
R.~B. Best and G.~Hummer.
\newblock Reaction coordinates and rates from transition paths.
\newblock {\em Proceedings of the National Academy of Sciences},
  102(19):6732--6737, 2005.

\bibitem{BBS18}
A.~Bittracher, R.~Banisch, and C.~Sch{\"u}tte.
\newblock Data-driven computation of molecular reaction coordinates.
\newblock {\em The Journal of Chemical Physics}, 149(15):154103, 2018.

\bibitem{Bittracher2017}
A.~Bittracher, P.~Koltai, S.~Klus, R.~Banisch, M.~Dellnitz, and
  C.~Sch{\"{u}}tte.
\newblock {Transition Manifolds of Complex Metastable Systems: Theory and
  Data-driven Computation of Effective Dynamics}.
\newblock {\em Journal of Nonlinear Science}, 28(2):471--512, 2017.

\bibitem{BH10}
J.~Bouvrie and B.~Hamzi.
\newblock Balanced reduction of nonlinear control systems in reproducing kernel
  {H}ilbert space.
\newblock {\em Proc. 48th Annual Allerton Conference on Communication, Control,
  and Computing}, pages 294--301, 2010.

\bibitem{BouHa17}
J.~Bouvrie and B.~Hamzi.
\newblock Kernel methods for the approximation of nonlinear systems.
\newblock {\em SIAM Journal on Control and Optimization}, 55(4):2460--2492,
  2017.

\bibitem{BouHaJCD}
J.~Bouvrie and B.~Hamzi.
\newblock Kernel methods for the approximation of some key quantities of
  nonlinear systems.
\newblock {\em Journal of Computational Dynamics}, 4(1):1--19, 2017.

\bibitem{Bowman2011}
G.~Bowman, V.~Volez, and V.~S. Pande.
\newblock {Taming the complexity of protein folding}.
\newblock {\em Curr. Opin. Struct. Biol.}, 21(1):4--11, 2011.

\bibitem{A19-1}
G.~R. Bowman, V.~S. Pande, and F.~No{\'{e}}, editors.
\newblock {\em {An Introduction to Markov State Models and Their Application to
  Long Timescale Molecular Simulation}}, volume 797 of {\em Advances in
  Experimental Medicine and Biology}.
\newblock Springer, 2014.

\bibitem{CaTh93}
C.~J. Camacho and D.~Thirumalai.
\newblock {Kinetics and thermodynamics of folding in model proteins}.
\newblock {\em Proc. Natl. Acad. Sci.}, 90(13):6369--6372, 1993.

\bibitem{Chekmarev2004}
D.~S. Chekmarev, T.~Ishida, and R.~M. Levy.
\newblock Long-time conformational transitions of alanine dipeptide in aqueous
  solution: Continuous and discrete-state kinetic models.
\newblock {\em The Journal of Physical Chemistry B}, 108(50):19487--19495,
  2004.

\bibitem{CKLMN08}
R.~R. Coifman, I.~G. Kevrekidis, S.~Lafon, M.~Maggioni, and B.~Nadler.
\newblock Diffusion maps, reduction coordinates, and low dimensional
  representation of stochastic systems.
\newblock {\em Multiscale Modeling \& Simulation}, 7(2):842--864, 2008.

\bibitem{E2002}
W.~E, W.~Ren, and E.~Vanden-Eijnden.
\newblock String method for the study of rare events.
\newblock {\em Phys. Rev. B}, 66:052301, 2002.

\bibitem{E2007}
W.~E, W.~Ren, and E.~Vanden-Eijnden.
\newblock Simplified and improved string method for computing the minimum
  energy paths in barrier-crossing events.
\newblock {\em The Journal of Chemical Physics}, 126(16):164103, 2007.

\bibitem{towards_tpt2006}
W.~E and E.~Vanden-Eijnden.
\newblock {Towards a Theory of Transition Paths}.
\newblock {\em J. Stat. Phys.}, 123(3):503--523, 2006.

\bibitem{ElEtAl17}
R.~Elber, J.~M. Bello-Rivas, P.~Ma, A.~E. Cardenas, and A.~Fathizadeh.
\newblock Calculating iso-committor surfaces as optimal reaction coordinates
  with milestoning.
\newblock {\em Entropy}, 19(5), 2017.

\bibitem{Freddolino2010}
P.~L. Freddolino, C.~B. Harrison, Y.~Liu, and K.~Schulten.
\newblock Challenges in protein folding simulations: {T}imescale,
  representation, and analysis.
\newblock {\em Nature physics}, 6(10):751, 2010.

\bibitem{FGH14b}
G.~Froyland, G.~A. Gottwald, and A.~Hammerlindl.
\newblock {A trajectory-free framework for analysing multiscale systems}.
\newblock {\em Phys. D Nonlinear Phenom.}, 328:34--43, 2016.

\bibitem{Fuk07}
K.~Fukumizu, A.~Gretton, X.~Sun, and B.~Sch\"{o}lkopf.
\newblock Kernel measures of conditional dependence.
\newblock In {\em Proceedings of the 20th International Conference on Neural
  Information Processing Systems}, NIPS'07, pages 489--496, USA, 2007. Curran
  Associates Inc.

\bibitem{gretton2012kernel}
A.~Gretton, K.~M. Borgwardt, M.~J. Rasch, B.~Sch{\"o}lkopf, and A.~Smola.
\newblock A kernel two-sample test.
\newblock {\em Journal of Machine Learning Research}, 13(Mar):723--773, 2012.

\bibitem{HuKa99}
B.~Hunt and V.~Kaloshin.
\newblock {Regularity of embeddings of infinite-dimensional fractal sets into
  finite-dimensional spaces}.
\newblock {\em Nonlinearity}, 12(5):1263--1275, 1999.

\bibitem{Klein2010}
R.~Klein.
\newblock Scale-dependent models for atmospheric flows.
\newblock {\em Annual Review of Fluid Mechanics}, 42(1):249--274, 2010.

\bibitem{KBSS18}
S.~Klus, A.~Bittracher, I.~Schuster, and C.~Sch{\"u}tte.
\newblock A kernel-based approach to molecular conformation analysis.
\newblock {\em The Journal of Chemical Physics}, 149(24):244109, 2018.

\bibitem{KNKWKSN18}
S.~Klus, F.~N\"uske, P.~Koltai, H.~Wu, I.~Kevrekidis, C.~Sch\"utte, and
  F.~No\'e.
\newblock Data-driven model reduction and transfer operator approximation.
\newblock {\em Journal of Nonlinear Science}, 28:985--1010, 2018.

\bibitem{KSM17}
S.~Klus, I.~Schuster, and K.~Muandet.
\newblock Eigendecompositions of transfer operators in reproducing kernel
  {H}ilbert spaces.
\newblock {\em arXiv Preprint}, 2017.

\bibitem{Kruskal1964A}
J.~B. Kruskal.
\newblock Multidimensional scaling by optimizing goodness of fit to a nonmetric
  hypothesis.
\newblock {\em Psychometrika}, 29(1):1--27, 1964.

\bibitem{Majda2003}
A.~J. Majda and R.~Klein.
\newblock Systematic multiscale models for the tropics.
\newblock {\em Journal of the Atmospheric Sciences}, 60(2):393--408, 2003.

\bibitem{MHP17}
R.~T. McGibbon, B.~E. Husic, and V.~S. Pande.
\newblock {Identification of simple reaction coordinates from complex
  dynamics}.
\newblock {\em J. Chem. Phys.}, 146(4):44109, 2017.

\bibitem{MRB01:CCA}
T.~Melzer, M.~Reiter, and H.~Bischof.
\newblock Nonlinear feature extraction using generalized canonical correlation
  analysis.
\newblock In G.~Dorffner, H.~Bischof, and K.~Hornik, editors, {\em Artificial
  Neural Networks --- ICANN 2001}, pages 353--360. Springer Berlin Heidelberg,
  2001.

\bibitem{Mercer1909}
J.~Mercer.
\newblock Functions of positive and negative type, and their connection the
  theory of integral equations.
\newblock {\em Philosophical Transactions of the Royal Society of London A:
  Mathematical, Physical and Engineering Sciences}, 209(441-458):415--446,
  1909.

\bibitem{MFSS16}
K.~Muandet, K.~Fukumizu, B.~Sriperumbudur, and B.~Sch\"olkopf.
\newblock Kernel mean embedding of distributions: A review and beyond.
\newblock {\em Foundations and Trends in Machine Learning}, 10(1--2):1--141,
  2017.

\bibitem{Mueller1980}
K.~M{\"u}ller.
\newblock Reaction paths on multidimensional energy hypersurfaces.
\newblock {\em Angewandte Chemie International Edition in English},
  19(1):1--13, 1980.

\bibitem{Munkres2000}
J.~R. Munkres.
\newblock {\em Topology}.
\newblock Prentice Hall, 2nd edition, 2000.

\bibitem{PNAS09}
F.~No{\'{e}}, C.~Sch{\"{u}}tte, E.~Vanden-Eijnden, L.~Reich, and T.~R. Weikl.
\newblock {Constructing the Full Ensemble of Folding Pathways from Short
  Off-Equilibrium Simulations}.
\newblock {\em Proc. Natl. Acad. Sci.}, 106(45):19011--19016, 2009.

\bibitem{Roweis2000}
S.~T. Roweis and L.~K. Saul.
\newblock Nonlinear dimensionality reduction by locally linear embedding.
\newblock {\em Science}, 290(5500):2323--2326, 2000.

\bibitem{SchCa92}
M.~J. Schervish and B.~P. Carlin.
\newblock {On the convergence of successive substitution sampling}.
\newblock {\em J. Comput. Graph. Stat.}, 1(2):111--127, 1992.

\bibitem{Schoelkopf2015}
B.~Sch{\"o}lkopf, K.~Muandet, K.~Fukumizu, S.~Harmeling, and J.~Peters.
\newblock Computing functions of random variables via reproducing kernel
  {H}ilbert space representations.
\newblock {\em Statistics and Computing}, 25(4):755--766, Jul 2015.

\bibitem{Scholkopf98:KPCA}
B.~Sch\"{o}lkopf, A.~Smola, and K.-R. M\"{u}ller.
\newblock Nonlinear component analysis as a kernel eigenvalue problem.
\newblock {\em Neural Computation}, 10(5):1299--1319, 1998.

\bibitem{Scholkopf2001}
B.~Sch{\"o}lkopf and A.~J. Smola.
\newblock {\em Learning with kernels: support vector machines, regularization,
  optimization, and beyond}.
\newblock MIT press, 2001.

\bibitem{SS13}
C.~Sch\"utte and M.~Sarich.
\newblock {\em Metastability and Markov State Models in Molecular Dynamics:
  Modeling, Analysis, Algorithmic Approaches}.
\newblock Number~24 in Courant Lecture Notes. American Mathematical Society,
  2013.

\bibitem{SP15}
C.~R. Schwantes and V.~S. Pande.
\newblock Modeling molecular kinetics with {tICA} and the kernel trick.
\newblock {\em Journal of Chemical Theory and Computation}, 11(2):600--608,
  2015.

\bibitem{Smith1999}
P.~E. Smith.
\newblock The alanine dipeptide free energy surface in solution.
\newblock {\em The Journal of Chemical Physics}, 111(12):5568--5579, 1999.

\bibitem{Smola07Hilbert}
A.~Smola, A.~Gretton, L.~Song, and B.~Sch\"{o}lkopf.
\newblock A {H}ilbert space embedding for distributions.
\newblock In {\em Proceedings of the 18th International Conference on
  Algorithmic Learning Theory}, pages 13--31. Springer-Verlag, 2007.

\bibitem{SoEtAl96}
N.~D. Socci, J.~N. Onuchic, and P.~G. Wolynes.
\newblock {Diffusive dynamics of the reaction coordinate for protein folding
  funnels}.
\newblock {\em J. Chem. Phys.}, 104(15):5860--5868, 1996.

\bibitem{Sri10}
B.~K. Sriperumbudur, A.~Gretton, K.~Fukumizu, B.~Sch\"{o}lkopf, and G.~R.
  Lanckriet.
\newblock Hilbert space embeddings and metrics on probability measures.
\newblock {\em J. Mach. Learn. Res.}, 11:1517--1561, Aug. 2010.

\bibitem{Steinwart2008:SVM}
I.~Steinwart and A.~Christmann.
\newblock {\em Support Vector Machines}.
\newblock Springer, New York, 1st edition, 2008.

\bibitem{Young2013}
F.~W. Young.
\newblock {\em Multidimensional scaling: History, theory, and applications}.
\newblock Psychology Press, 2013.

\bibitem{ZHS16}
W.~Zhang, C.~Hartmann, and C.~Sch{\"{u}}tte.
\newblock {Effective dynamics along given reaction coordinates, and reaction
  rate theory}.
\newblock {\em Faraday Discuss.}, 195:365--394, 2016.

\end{thebibliography}

\appendix
\section{Proof of the distortion bounds}
\label{sec:proofs}

\begin{proof}[Proof of Proposition~\ref{prop:small distance}]
First, note that $L^2_{1/\rho}\subset L^2 $, as for $p\in L^2_{1/\rho}$
\begin{equation}
\label{eq:sqrtrho}
\|p\|_{L^2} = \|\sqrt{\rho}\ts p\|_{L^2_{1/\rho}} \leq \|\sqrt{\rho}\|_\infty \ts \|p\|_{L^2_{1/\rho}}.
\end{equation}

Let $(\lambda_i,\varphi_i)$ be the eigenpairs of the integral operator $\mathcal{T}_k$, ordered in decreasing order of $\lambda_i$. For arbitrary $p\in L^2_{1/\rho}$ consider the decomposition into the basis $\big\{\varphi_i\big\}_{i\in\mathbb{N}}$ of $L^2$:
$$
p = \sum_{i=0}^\infty \tilde{p}_i \ts \varphi_i.
$$
Select $i_\text{max}\in\N$ such that there exists an index $i\leq i_\text{max}$ with $\tilde{h}_i\neq 0$, and such that $\lambda_i<\big(\varepsilon/\|\sqrt{\rho}\|_\infty\big)^2$ for all $i\geq i_\text{max}$, and define
$$
q = \sum_{i=0}^{i_\text{max}-1}\tilde{p}_i \varphi_i.
$$
Then,
 $$
\|p-q\|_{L^2}^2 = \Big\| \sum_{i=i_\text{max}}^\infty \tilde{p}_i \varphi_i\Big\|_{L^2}^2 = \sum_{i=i_\text{max}}^\infty \tilde{p}_i^2.
$$
Further, using that $\big\{\sqrt{\lambda_i} \ts \varphi_i\big\}_{i\in\mathbb{N}}$ forms an orthonormal basis of $\H$, and that $\mu$ is a linear operator, we get
$$
\|\mu(p)-\mu(q)\|_\H^2 = \Big\|\sum_{i=i_\text{max}}^\infty \tilde{p}_i \lambda_i \varphi_i \Big\|_\H^2 = \sum_{i=i_\text{max}}^\infty \lambda_i \tilde{p}_i^2 \leq \lambda_{i_\text{max}}\sum_{i=i_\text{max}}^\infty \tilde{p}_i^2 .
$$
Thus we get
$$
\frac{\|p-q\|_{L^2}}{\|\mu(p)-\mu(q)\|_\H} \geq 1/{\sqrt{\lambda_{i_\text{max}}}} > \|\sqrt{\rho}\|_\infty/\varepsilon,
$$
and with \eqref{eq:sqrtrho} finally
\begin{equation*}
    \frac{\|p-q\|_{L^2_{1/\rho}}}{\|\mu(p)-\mu(q)\|_\H} > \frac{1}{\varepsilon}. \qedhere
\end{equation*}
\end{proof}

\begin{proof}[Proof of Lemma~\ref{lem:stretchingfactor}]
As $\big\{\sqrt{\lambda_i} \ts \varphi_i\big\}_{i\in\mathbb{N}_0}$ forms an orthonormal basis of $\H$, we obtain
$$
\|\mu(h)\|_\H^2 = \Big\|\sum_{i=0}^\infty \tilde{h}_i \lambda_i \varphi_i \Big\|_\H^2 = \sum_{i=0}^\infty \lambda_i \tilde{h}_i^2 \geq \sum_{i=0}^{i_\text{max}} \lambda_i \tilde{h}_i^2.
$$
Further, $\big\{\varphi_i\big\}_{i\in\mathbb{N}_0}$ forms an orthonormal basis of $L_2$, and so
$$
\|h\|_2^2 = \Big\| \sum_{i=0}^\infty \tilde{h}_i \varphi_i\Big\|_2^2 = \sum_{i=0}^\infty \tilde{h}_i^2 = c(h,i_\text{max}) \cdot \sum_{i=0}^{i_\text{max}} \tilde{h}_i^2.
$$
Thus,
\begin{equation*}
\frac{\|\mu(h)\|_\H}{\|h\|_2} \geq \left(\frac{\sum_{i=0}^{i_\text{max}}\lambda_i\tilde{h}_i^2}{c(h,i_\text{max})\cdot\sum_{i=0}^{i_\text{max}}\tilde{h}_i^2}\right)^{1/2} \geq \sqrt{\frac{\lambda_{i_\text{max}}}{c(h,i_\text{max})}}. \qedhere
\end{equation*}
\end{proof}

\hl{

\begin{proof}[Proof of Lemma~\ref{lem:L1 estimation}]
With assumption~\eqref{eq:transition_density_decomposition} and~\eqref{eq:rho_decomposition}, we can write the left-hand side of~\eqref{eq:L1 estimation} as
\begin{align*}
\|p^\tau_{x} \|_{L^2_{1/\rho}}^2 &= \int_\X p^\tau_{x}(y)^2 \frac{1}{\rho(y)}\ts dy \\
&<\int_\X \frac{  \frac{1}{1-\delta(x,y)}\sum_{i=1}^d c_i(x)p^\tau_{A_i}(y)}{\sum_{i=1}^d b_i p^\tau_{A_i}(y)} \ts |p^\tau_x(y)| \ts dy =: (\star).
\end{align*}
As for all $x\in\X$ it holds $c_i(x)\geq 0$ and $\sum_{i=1}^d c_i(x) \le 1$, we obtain
\begin{align}
\left\|\frac{\frac{1}{1-\delta(x,\cdot)}\sum_{i=1}^d c_i(x)p^\tau_{A_i}}{ \sum_{i=1}^db_i p^\tau_{A_i}} \right\|_\infty 
&\leq \frac{1}{1-\delta^*} \left\|\frac{\sum_{i=1}^d c_i(x) p^\tau_{A_i}}{\sum_{i=1}^d b_i p^\tau_{A_i}}\right\|_\infty \nonumber \\ 
&\leq \frac{1}{1-\delta^*} \left\|   \frac{ \sum_{i=1}^d p^\tau_{A_i} }{ \sum_{i=1}^d b_i p^\tau_{A_i}} \right\|_\infty \label{eq:pAi_estimeta} \\
 &\leq \frac{1}{(1-\delta^*)\min_i b_i}. \nonumber
\end{align}
With this, we can estimate the integral $(\star)$ as
\begin{equation*}
(\star) \leq \int_\X \frac{1}{(1-\delta^*)\min_i b_i} |p^\tau_x(y)|\ts dy = \frac{1}{(1-\delta^*)\min_i b_i} \underbrace{\big\| p^\tau_x \big\|_{L^1}}_{=1}. \qedhere
\end{equation*}
\end{proof}

\begin{proof}[Proof of Lemma~\ref{lem:L2 estimation}]
The proof is completely analogous to the proof of Lemma~\ref{lem:L1 estimation}, while in the estimate corresponding to~\eqref{eq:pAi_estimeta} we use that
\[
\max_i |c_i(x_1) - c_i(x_2)| \le 1. \qedhere
\]
\end{proof}

%\begin{proof}[Proof of~\eqref{eq:Cbound} in Remark~\ref{rem:C}]
%Analogously to~\eqref{eq:rho_decomposition} we obtain the pointwise bound
%\[
%\rho \le \frac{1}{1-\delta^*} \sum_{i=1}^d b_i p^{\tau}_{A_i}.
%\]
%%By integrating~\eqref{eq:rho_decomposition} over $\X$ we have $\sum_i b_i \le 1$.
%Using that $\int\rho=1=\int p^{\tau}_{A_i}$ for all $i$, we obtain that $\sum_i b_i \ge 1-\delta^*$.
%By denoting $\delta c_i := |c_i(x) - c_i(y)|$, we have
%\begin{align*}
%\|p^{\tau}_x - p^{\tau}_y\|_{1/\rho}^2 &=\int \frac{\left(\sum_{i=1}^d \delta c_i p^{\tau}_{A_i}\right)^2}{\rho} \\
%&\ge (1-\delta^*) \int \frac{\sum_{i,j=1}^d \delta c_i \delta c_j p^{\tau}_{A_i} p^{\tau}_{A_j}}{\sum_{\ell=1}^d b_{\ell} p^{\tau}_{A_{\ell}}} \\
%&= (1-\delta^*) \sum_{j=1}^d \int \delta c_j p^{\tau}_{A_j}  \sum_{i=1}^d \delta c_i \underbrace{ \bigg(\frac{ p^{\tau}_{A_i}}{\sum_{\ell=1}^d b_{\ell} p^{\tau}_{A_{\ell}}} \bigg)}_{\ge \frac{1}{\max_{\ell} b_{\ell}}}\\
%&\ge 
%\end{align*}
% WRONG!
%
%\end{proof}
}

\end{document}